\newcommand{\reff}[1]{(\ref{#1})}
\theoremstyle{plain}
\newtheorem{theo}{Theorem}[section]
\newtheorem{theo*}{Theorem}
\newtheorem{cor}[theo]{Corollary}
\newtheorem{lem}[theo]{Lemma}
\newtheorem{defi}[theo]{Definition}
\theoremstyle{remark}
\newtheorem{rem}[theo]{Remark}
\newcommand{\co}{\mathcal O}
\newcommand{\ca}{{\mathcal A}}
\newcommand{\cb}{{\mathcal B}}
\newcommand{\cf}{{\mathcal F}}
\newcommand{\ci}{{\mathcal I}}
\newcommand{\cj}{{\mathcal J}}
\newcommand{\cl}{{\mathcal L}}
\newcommand{\cm}{{\mathcal M}}
\newcommand{\cq}{{\mathcal Q}}
\newcommand{\cs}{{\mathcal S}}
\newcommand{\Tau}{{\mathcal T}}
\newcommand{\A}{{\mathbb A}}
\newcommand{\C}{{\mathbb C}}
\newcommand{\E}{{\mathbb E}}
\newcommand{\F}{{\mathbb F}}
\newcommand{\N}{{\mathbb N}}
\renewcommand{\P}{{\mathbb P}}
\newcommand{\R}{{\mathbb R}}
\newcommand{\T}{{\mathbb T}}
\newcommand{\Z}{{\mathbb Z}}
\newcommand{\rP}{{\rm P}}
\newcommand{\rE}{{\rm E}}
\newcommand{\bt}{{\mathbf t}}
\newcommand{\bff}{{\mathbf f}}
\newcommand{\ind}{{\bf 1}}
\newcommand{\fa}{\mathrm{Pa}}
\newcommand{\dom}{{\rm dom}}
\newcommand{\ri}{{\rm ri}\;}
\newcommand{\aff}{{\rm aff}\;}
\newcommand{\cv}{{\rm cv}\;}
\newcommand{\clo}{{\rm cl}\;}
\newcommand{\inter}{{\rm int}\;}
\newcommand{\Span}{{\rm span}\;}
\newcommand{\supp}{{\rm supp}\;}
\newcommand{\Card}{{\rm Card}\;}
\newcommand{\norm}[1]{\mathop{\parallel #1  \parallel}\nolimits}
\newcommand{\val}[1]{\mathop{\left| #1 \right|}\nolimits}
\newcommand{\inv}[1]{\mathop{\frac{1}{ #1}}\nolimits}
\newcommand{\expp}[1]{\mathop {\mathrm{e}^{ #1}}}
\newcommand{\Var}{{\rm Var}\;}
\newcommand{\Cov}{{\rm Cov}\;}
\title{Critical multi-type Galton-Watson trees conditioned to be large}
\date{\today}
\author{Romain Abraham}
\address{Romain Abraham,
 Laboratoire MAPMO, CNRS, UMR 7349,
 F\'ed\'eration Denis Poisson, FR 2964,
  Universit\'{e} d'Orl\'{e}ns,
 B.P. 6759,
 45067 Orl\'{e}ans cedex 2,
France}
\email{romain.abraham@univ-orleans.fr}
\author{Jean-Fran\c{c}ois Delmas}
\address{Jean-Fran\c{c}ois Delmas,
 Universit\'{e} Paris-Est, CERMICS (ENPC), F-77455 Marne La Vall\'{e}e, France}
\email{delmas@cermics.enpc.fr} 
\author{Hongsong Guo}
\address{Hongsong Guo,
 Universit\'{e} Paris-Est, CERMICS (ENPC), F-77455 Marne La Vall\'{e}e,
 France; Department of Mathematics,
China University of Mining and Technology,
Beijing 100083, P. R. China}
\email{hsguo@mail.bnu.edu.cn}
\begin{document}

\subjclass[2010]{60J80; 60B10}

\keywords{Galton-Watson; random tree; local-limit; strong ratio theorem;
branching process}

\begin{abstract}
  Under  minimal  condition, we  prove  the  local convergence of  a  critical
  multi-type  Galton-Watson tree  conditioned  on having  a large  total
  progeny by  types towards  a multi-type Kesten's  tree.  We  obtain the
  result  by  generalizing  Neveu's   strong  ratio  limit  theorem  for
  aperiodic random walks on $\Z^d$.
\end{abstract}

\maketitle

\section{Introduction}

In  \cite{K86}, Kesten  shows  that the  local limit  of  a critical  or
subcritical Galton-Watson (GW) tree conditioned on having a large height
is an infinite  GW tree (in fact  a multi-type GW tree  with one special
individual per generation) with a  unique infinite spine, which we shall
call {\it  Kesten's ~tree} in the  present paper. In Abraham  and Delmas
\cite{AD14a} a  sufficient and necessary  condition is given for  a wide
class of  conditionings for a  critical GW  tree to converge  locally to
Kesten's tree  under minimal  hypotheses on the  offspring distribution.
Notice  that condensation  may  arise when  considering sub-critical  GW
trees,   see   Janson   \cite{j:sgtcgwrac},   Jonnson   and   Stefansson
\cite{js:cnt}, He  \cite{h:cgwtmod} or  Abraham and  Delmas \cite{AD14b}
for results in this direction. When  scaling limits of multi-type GW tree
are considered, one obtains as a limit a continuous GW tree, see Miermont
\cite{M08}  or  Gorostiza  and   Lopez-Mimbela  \cite{glm90}  (when  the
probability to  give birth to different  types goes down to  0). In this
latter  case see  Delmas  and  Hénard \cite{dh}  for  the  limit on  the
conditioned random tree to have a large height.
\\

In  the  multi-type case,  P{\'e}nisson  \cite{P14}  has proved  that  a
critical $d$-types  GW process  conditioned on the  total progeny  to be
large with a given asymptotic  proportion of types converges locally to
a multi-type GW process (with a special individual per generation) under
the  condition  that  the  branching process  admits  moments  of  order
$d+1$. Stephenson \cite{S14} gave,  under an exponential moments condition,
the local convergence  of a multi-type GW tree, conditioned  on a linear
combination of  population sizes of each  type to be large,  towards the
multi-type Kesten's tree introduced by  Kurtz, Lyons, Pemantle and Peres
\cite{KLPP97}.  The aim  of this paper is to give  minimal hypotheses to
ensure  the  local   convergence  of  a  critical   multi-type  GW  tree
conditioned  on the  total progeny  to be  large towards  the associated
multi-type Kesten's tree, see Theorem \ref{dTheorem}. When the offspring
distribution is  aperiodic, the minimal  hypotheses is the  existence of
the  mean matrix  which is  assumed  to be  primitive.  Furthermore,  we
exactly condition on  the asymptotic proportion of types  for the total
progeny of the GW tree to  be given by the (normalized) left eigenvector
associated with the Perron-Frobenius eigenvalue of the mean matrix.
\\

If the asymptotic proportion of types  is not equal to the (normalized)
left eigenvector associated with  the Perron-Frobenius eigenvalue of the
mean matrix, then under an exponential  moments condition for the offspring
distribution, it is possible to get  a Kesten's tree as local limit, see
\cite{P14}.   However, without  an exponential  moments  condition for  the
offspring distribution no results are known, and results in \cite{AD14b}
for the mono-type  case suggests a condensation phenomenon  (at least in
the  sub-critical   case).   Conditioning  large  multi-type   (or  even
mono-type) continuous GW  tree to have a large population  in the spirit
of \cite{dh} is also an open question.
\\

The proof of  Theorem \ref{dTheorem} relies on two  arguments. The first
one is  a generalization  of the Dwass formula  for multi-type  GW processes
given  by  Chaumont  and  Liu  \cite{CL13}  which  encodes  critical  or
sub-critical  $d$-multi-type  GW  forests  using  $d$  random  walks  of
dimension $d$.   The second one is  the strong ratio theorem  for random
walks  in $\Z^d$,  see Theorem  \ref{theo:neveuth}, which  generalizes a
result by  Neveu \cite{N63} in dimension  one.  The proof of  the strong
ratio theorem relies  on a uniform version of  the $d$-dimensional local
theorem  of  Gnedenko  \cite{G48},  see  also  Gnedenko  and  Kolmogorov
\cite{GK54}  (for  the  sum  of independent  random  variables),  Rvaceva
\cite{R61} (for the sum of  $d$-dimensional i.i.d.  random variables) or
Stone \cite{S66} (for the sum  of $d$-dimensional i.i.d.  lattice or non
lattice random variables), which is given in Section \ref{sec:Gnedenko},
and  properties  of the  Legendre-Laplace  transform of  a  probability
distribution.  As we were unable to  find those latter properties in the
literature,   we  give   them  in   a  general   framework  in   Section
\ref{sec:prel}, as we believe they
might be interesting by themselves.\\

The paper is organized as follows. We present in Section \ref{sec:multi}
the  topology on the set of  the multi-type  trees and  a sufficient  and necessary
condition  for the  local convergence  of random  multi-type trees,  see
Corollary  \ref{cor:cv}, the  definition of  a multi-type GW  tree with  a
given  offspring  distribution and  the  aperiodicity  condition on  the
offspring distribution,  see Definition  \ref{defi:aperiodic-p}. Section
\ref{sec:main} is  devoted to  the main result,  Theorem \ref{dTheorem},
and its  proof. The  Appendix collects results  on the  Legendre-Laplace
transform in  a general framework in  Section \ref{sec:prel}, Gnedenko's
$d$-dimensional  local theorem  in Section  \ref{sec:Gnedenko}, and  the
strong ratio limit  theorem for $d$-dimensional random  walks in Section
\ref{sec:rlrw}.

\section{Multi-type trees}
\label{sec:multi}
\subsection{General notations}
We denote  by $\mathbb{N}=\{0, 1,  2, \ldots\}$ the set  of non-negative
integers and by  $\mathbb{N}^*=\{1, 2, \ldots\}$ the  set of positive
integers. For $d\in \N^*$, we set $[d]=\{1,\ldots, d\}$.

Let $d\geq1$.  We  say $x=(x_i, i\in [d])\in\R^d$ is a  column vector in
$\R^d$.   We  write $\ind=(1,\ldots,1)\in\R^d$,  $0=(0,\ldots,0)\in\R^d$
and denote by $\mathbf{e}_i$ the vector  such that the $i$-th element is
1 and others are 0.  For vectors $x=(x_i, i\in [d])\in\R^d$ and $y=(y_i,
i\in [d])\in\R^d$, we  denote by $\langle x, y\rangle$  the usual scalar
product of $x$ and $y$,  by $x^y$ the product $\prod_{i=1}^d x_i^{y_i}$,
by  $|x|=\sum_{i=1}^d |x_i|$  and $\norm{x}=\sqrt{\langle  x,x \rangle}$
the $\ell^1$ and $\ell^2$ norms of $x$,  and we write $x\leq y$ (resp.  $x<y$)
if $x_i\leq y_i$ (resp. $x_i<y_i$) for all $i\in [d]$.

For  any nonempty  set  $A\subset  \R^d$, we define $\Span  A$ as the  linear
sub-space generated  by $A$  (that is  $\Span A=\{\sum_{i=1}^n\alpha_i
y_i; \, \alpha_i\in \R, y_i\in A, i\in [n], n\in\N^*\}$) and for
$x\in\R^d$, we denote $x+A=\{x+y; y\in A\}$. For $A$ and $B$ nonempty
subsets of $\R^d$, we denote $A-B=\{x-y; x\in A, y\in B\}$. 

For a random variable $X$ and an event $A$, we write $\E[X;\,
A]$ for $\E\left[X\ind_A\right]$. 

\subsection{Notations for marked trees}
\label{sec:not-mt}
Let  $d\in  \N^*$.  Denote  by  $[d]$  the set  of  types  or marks,  by
$\widehat{\mathcal {U}}=\bigcup_{n\geq0}(\mathbb{N}^*)^n$  the set of
finite   sequences   of   positive    integers   with   the   convention
$(\mathbb{N}^*)^0=\{\widehat{\emptyset}\}$   and    by   $   \mathcal
{U}=\bigcup_{n\geq0}\Big((\mathbb{N}^*)^n\times[d]\Big) $  the set of
finite  sequences  of positive  integers  with  a  type.  For  a  marked
individual  $u\in\mathcal{U}$, we  write $u=(\hat{u},  \mathcal {M}(u))$
with $\hat{u}\in  \widehat{\mathcal {U}}$  the individual  and $\mathcal
{M}(u)\in[d]$ its  type or mark.   Let $|u|=|\hat{u}|$ be the  length or
height  of $u$  defined  as  the integer  $n$  such that  $\hat{u}=(u_1,
\ldots, u_n)\in(\mathbb{N}^*)^n$.  If $\hat{u}$ and $\hat{v}$ are two
sequences in $\widehat{\mathcal {U}}$, we denote by $\hat{u}\hat{v}$ the
concatenation   of  the   two  sequences,   with  the   convention  that
$\hat{u}\hat{v}=\hat{u}$     if    $\hat{v}=\widehat{\emptyset}$     and
$\hat{u}\hat{v}=\hat{v}$  if   $\hat{u}=\widehat{\emptyset}$.   For  $u,
v\in\mathcal{U}$, we  denote by  $uv$ the concatenation  of $u$  and $v$
such           that          $\widehat{uv}=\hat{u}\hat{v}$           and
$\mathcal{M}(uv)=\mathcal{M}(v)$              if             $|v|\geq1$;
$\mathcal{M}(uv)=\mathcal{M}(u)$ if $|v|=0$.   Let $u, v\in\mathcal{U}$.
We say that $v$ (resp. $\hat v$)  is an ancestor of $u$ (resp. $\hat u$)
and write $v\preccurlyeq u$ (resp. $\hat v\preccurlyeq \hat u$) if there
exists  $w\in{\mathcal{U}}$ such  that $u=vw$  (resp. $\hat  w \in  \hat
{\mathcal{U}}$ such
that $\hat u=\hat v \hat w$).\\

A  tree $\hat \bt$ is a subset of $\hat{\mathcal{U}}$ such that:
\begin{itemize}
  \item $\widehat{\emptyset}\in \hat{\bt}$.
  \item   If  $\hat{u}\in   \hat{\bt}$,  then   $\{\hat{v};  \hat v\preccurlyeq
    \hat u\}\subset \hat{\bt}$.
  \item For every $\hat u\in  \hat{\bt}$, there exists $k_{\hat u} [\hat
    \bt]\in\mathbb{N}$  such that,  for every  positive integer  $\ell$,
    $\hat{u}\ell\in{\hat \bt}$ iff $1\leq  \ell \leq k_{\hat u}[\hat
    \bt]$.
\end{itemize}

A marked tree $\bt$ is a subset of $\mathcal{U}$ such that:
\begin{itemize}
  \item[(a)]  The set $\hat{\bt}=\{\hat{u}; u\in \bt\}$ of (unmarked)
    individuals of $\bt$ is a tree. 
\item[(b)]  There is  only one  type per  individual: for  $u, v\in  \bt$,
  $\hat{u}=\hat{v}$  implies  $\mathcal{M}(u)=\mathcal{M}(v)$  and  thus
  $u=v$.
\end{itemize}

Thanks to (b),  the number of offsprings of the  marked individual $u\in
\bt$,  $k_u[\bt]$, corresponds  to  $k_{\hat u}  [\hat  \bt]$.  In  what
follows we will deal only with marked trees and simply call them trees.

Denote              by              $\emptyset_\bt=(\widehat{\emptyset},
\mathcal{M}(\emptyset_\bt))\in\mathcal{U}$  the root  of the  tree $\bt$
and write  $\emptyset$ instead  of $\emptyset_\bt$  when the  context is
clear.  The parent of $v\in \bt\setminus \emptyset_\bt$ in $\bt$, denoted by
$\fa_v(\bt)$ is the only $u \in \bt$ such that $|u|=|v|-1$ and $u\preccurlyeq v$. 
The set of the children of $u\in \bt$ is
\[
C_u(\bt)=\{v\in  \bt, \; \fa_v(\bt)=u\}.
\]
Notice   that   $k_u[\bt]=\Card(C_u(\bt))$   for   $u\in\bt$.   We   set
$k_u(\bt)=(k_u^{(i)}[\bt], i\in [d])$, where for $i\in [d]$
\[
k_u^{(i)}[\bt]=\Card(\{v\in C_u(\bt); \, \mathcal{M}(v)=i\})
\]
is  the  number  of offsprings  of  type  $i$  of  $u\in \bt$.  We  have
$\sum_{i\in [d]}  k_u^{(i)}[\bt]= k_u[\bt]$.   The vertex $u\in  \bt$ is
called a leaf  if $k_u[\bt]=0$ and let  $\mathcal {L}_0(\bt)=\{u\in \bt,
k_u[\bt]=0\}$ be the set of leaves of $\bt$.

We   denote   by   $\mathbb{T}$   the  set   of   marked   trees.    For
$\bt\in\mathbb{T}$,  we   define  $|\bt|=(|\bt^{(i)}|,   i\in[d])$  with
$|\bt^{(i)}|=\Card(\{u\in  \bt,   \mathcal{M}(u)=i\})$  the   number  of
individuals   in    $\bt$   of   type    $i$.    Let   us    denote   by
$\mathbb{T}_0=\{\bt\in\mathbb{T}:  \Card(\bt)<\infty\}$  the  subset  of
finite   trees.     We   say    that   a    sequence   $\mathbf{v}=(v_n,
n\in\N)\subset\mathcal{U}$  is an  infinite spine  if $v_n  \preccurlyeq
v_{n+1} $ and $|v_n|=n$ for all  $n\in \N$.  We denote by $\mathbb{T}_1$
the subset  of trees which  have one and  only one infinite  spine.  For
$\bt\in\mathbb{T}_1$, denote  by $\mathbf{v}_\bt$ the infinite  spine of
the tree  $\bt$.  Let  $\mathbb{T}_1'$ be  the subset  of $\mathbb{T}_1$
such that the infinite spine features each type infinitely many times:
\[
\mathbb{T}'_1=\{\bt\in\mathbb{T}_1;\;\forall i\in[d],\, \Card(\{v\in
\mathbf{v}_\bt; \, \mathcal{M}(v)=i\})=\infty \}. 
\]

The height of a tree $\bt$ is defined by $H(\bt)=\sup\{|u|, u\in \bt\}$.
For $h\in\mathbb{N}$, we denote by $\mathbb{T}^{(h)}=\{\bt\in\mathbb{T};
H(\bt)\leq h\}$  the subset  of marked  trees with  height less  than or
equal to $h$.

 \subsection{Convergence determining class}
 \setcounter{equation}{0}

 For $h\in\mathbb{N}$, the restriction  function $r_h$ from $\mathbb{T}$
 to  $\mathbb{T}$  is defined  by  $r_h(\bt)=\{u\in  \bt, |u|\leq  h\}$.
 We endow  the set  $\mathbb{T}$ with  the ultra-metric  distance $  d(\bt,
 \bt')=2^{-\max\{h\in\mathbb{N},   r_h(\bt)=r_h(\bt')\}}$.   The   Borel
 $\sigma$-field  associated  with  the  distance  $d$  is  the  smallest
 $\sigma$-field  containing the  singletons for  which the  restrictions
 $(r_h,  h\in\N)$ are  measurable. With  this distance,  the restriction
 functions   are  continuous.    Since   $\mathbb{T}_0$   is  dense   in
 $\mathbb{T}$   and  $(\mathbb{T},   d)$  is   complete,  we   get  that
 $(\mathbb{T}, d)$ is a Polish metric space.

 Let $\bt, \bt'\in \mathbb{T}$  and $x\in\mathcal{L}_0(\bt)$.  If the type
 of  the  root  of  $\bt'$   is  $\mathcal{M}(x)$,  we  denote  by
\[
 \bt\otimes(\bt',x)=\bt\cup\{xv,v\in \bt'\} 
\]
the tree obtained by grafting the tree  $\bt'$ on the leaf $x$ of the tree
$\bt$;  otherwise,  let  $\bt\otimes(\bt',x)=\bt$.    Then  we  consider 
\[
\mathbb{T}(\bt,x)=\{\bt\otimes(\bt',x),  \bt'\in\mathbb{T}\} 
\]
the set of trees  obtained by grafting a tree on the  leaf $x$ of $\bt$.
For $\bt\in \T_0$, it is easy to see that $\mathbb{T}(\bt,x)$ is closed and also open.

Set $\cf=\{\mathbb{T}(\bt,x);\, \bt\in\mathbb{T}_0, x\in\mathcal
{L}_0(\bt)                 \text{                  and                 }
\mathcal{M}(\emptyset_\bt)=\mathcal{M}(x)\}\cup\{       \{\bt\};      \,
\bt\in\mathbb{T}_0\}$.    Following   the   proof  of   Lemma   2.1   in
\cite{AD14a}, it is easy to get the following result.
\begin{lem}\label{conv-determ}
The family $\cf$ is a convergence determining class on
$\mathbb{T}_0\cup\mathbb{T}'_1$. 
\end{lem}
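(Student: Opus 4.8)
The plan is to follow the strategy of Lemma 2.1 in \cite{AD14a}, adapting it to the multi-type setting. Recall that a family $\cf$ of Borel subsets of a Polish space $E$ is convergence determining for a set $E' \subset E$ if, whenever probability measures $(\mu_n, n\in\N)$ and $\mu$ are supported on $E'$ and $\mu_n(A) \to \mu(A)$ for every $A\in\cf$ with $\mu(\partial A)=0$, then $\mu_n$ converges weakly to $\mu$. Here $E=\mathbb{T}$, $E' = \mathbb{T}_0 \cup \mathbb{T}'_1$, and $\cf$ consists of the sets $\mathbb{T}(\bt,x)$ together with the singletons $\{\bt\}$ for $\bt\in\mathbb{T}_0$. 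The key point is that since $\mathbb{T}_0$ is dense in $\mathbb{T}$ and the restriction maps $r_h$ are continuous with values in the countable set $\bigcup_h r_h(\mathbb{T}_0) \cup \mathbb{T}^{(h)}\cap\mathbb{T}_0$-type objects, weak convergence on $\mathbb{T}$ is governed by the finite-dimensional-like data $r_h(\bt)$.

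First I would observe that for a finite tree $\ba\in\mathbb{T}_0$, the singleton $\{\ba\}$ is both open and closed (the metric is ultrametric and $\{\ba\}$ equals the ball of radius $2^{-H(\ba)-1}$ around $\ba$ intersected with the set of trees of height $\le H(\ba)$, which is a clopen condition), so $\mu(\partial\{\ba\})=0$ automatically and convergence $\mu_n(\{\ba\})\to\mu(\{\ba\})$ is unconditionally available from the hypothesis. Similarly, for $\bt\in\mathbb{T}_0$ and $x\in\mathcal{L}_0(\bt)$ with matching root type, $\mathbb{T}(\bt,x)$ is clopen as already noted in the excerpt, so again no boundary issue arises. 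Then I would show that the $\sigma$-algebra generated by $\cf$ (equivalently, the algebra of finite unions/intersections/complements) separates points of $E'$ and, more importantly, that for each $h$ the event $\{r_h(\cdot) = \bs\}$ for a fixed finite tree $\bs$ of height $\le h$ can be written as a \emph{finite} disjoint union of elements of $\cf$ (or their complements within another element of $\cf$): indeed $\{r_h = \bs\}$ is $\{\bs\}$ if $H(\bs)<h$, and if $H(\bs)=h$ it is the set of trees that restrict to $\bs$, which decomposes according to whether each leaf $x$ of $\bs$ at height $h$ stays a leaf or gets something grafted — this is exactly the combinatorics of $\mathbb{T}(\bs,x)$ sets and their complements, handled leaf by leaf by inclusion–exclusion.

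The core argument is then: given $A$ a cylinder event $\{r_h=\bs\}$, write $\ind_A$ as a finite $\Z$-linear combination of indicators of sets in $\cf$, so that $\mu_n(A)\to\mu(A)$ for all such cylinders; since the $r_h$ determine the topology (a sequence of trees converges iff its restrictions $r_h$ are eventually constant for each $h$), and since $\mu,\mu_n$ live on $\mathbb{T}_0\cup\mathbb{T}'_1$ — a set on which, for $\mu$-a.e.\ tree, either the tree is finite (captured by a singleton) or it has an infinite spine visiting every type infinitely often (so its restrictions $r_h$ are eventually in the "has a grafted subtree at the spine vertex" family) — one checks that $\{r_h = \bs\}$ ranges over a convergence-determining class of cylinders by a standard argument (e.g.\ Billingsley, or directly: test functions depending on $r_h$ only are dense in $C_b(\mathbb{T})$ by density of $\mathbb{T}_0$ and continuity of $r_h$). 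Hence $\mu_n \Rightarrow \mu$.

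The main obstacle I expect is the bookkeeping at height $h$: one must be careful that when $\bt'$ is grafted on a leaf $x$ of $\bs$ the \emph{type} of the root of $\bt'$ must match $\mathcal{M}(x)$, and that the grafted subtree could be a single vertex (type $\mathcal{M}(x)$, no children) — so "$x$ remains a leaf in the larger tree" is the complement, inside $\mathbb{T}(r_{h-1}(\bs),\fa_x)$-type sets, of a union of $\mathbb{T}(\bs',x')$ over first-generation extensions; making this inclusion–exclusion genuinely finite and disjoint requires enumerating the finitely many leaves of $\bs$ at level $h$ and, for each, the finitely many possible one-step offspring type-vectors only up to the cylinder level, which is fine but tedious. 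The role of restricting to $\mathbb{T}_0 \cup \mathbb{T}'_1$ (rather than all of $\mathbb{T}_1$) is precisely to guarantee that a limit tree with an infinite spine has, along that spine, infinitely many vertices of each type, so that the relevant grafting events $\mathbb{T}(\bt,x)$ with the correct type matching are not eventually "wasted" — this is what makes $\cf$ rich enough to be convergence determining on $E'$ but it need not be on all of $\mathbb{T}_1$. Once this combinatorial reduction is in place, the rest is the routine density/continuity argument, exactly as in \cite{AD14a}.
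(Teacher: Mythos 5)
Your topological observations (the sets $\{\bt\}$ and $\T(\bt,x)$ are clopen, hence automatically $\mu$-continuity sets; convergence in $\T$ is governed by the restrictions $r_h$) are correct, and you are right to invoke Billingsley. However, the central reduction you propose — that the cylinder event $\{r_h=\bs\}$ can be written as a \emph{finite} disjoint union / inclusion--exclusion of elements of $\cf$ — is false, and I do not see how to repair it as stated.

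Take $\bs\in\T_0$ with two leaves $x_1,x_2$ at height exactly $h$. A tree $T$ with $r_h(T)=\bs$ and with non-trivial subtrees grafted at \emph{both} $x_1$ and $x_2$ lies in $\{r_h=\bs\}$ but in no set $\T(\bt',y)\subset\{r_h=\bs\}$: any such set fixes the whole tree outside the subtree rooted at the single vertex $y$, so it pins down the entire subtree at (at least) one of $x_1,x_2$ to a single finite tree. Finitely many such sets, even allowing complements and inclusion--exclusion, cover only finitely many ``slices'' of $\{r_h=\bs\}$ and never the whole event, which contains trees with independent arbitrary subtrees at $x_1$ and $x_2$. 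Your remark that there are only ``finitely many possible one-step offspring type-vectors'' is also not right: the offspring numbers are unbounded, so the one-level extensions of $\bs$ already form an infinite family.

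The correct route, and the one Lemma~2.1 of \cite{AD14a} follows, replaces the finite decomposition by a \emph{countable} cover relative to the support and then uses Billingsley's Theorem~2.4: $\cf\cup\{\emptyset\}$ is a $\pi$-system of clopen sets (a routine case analysis on the intersections $\T(\bt_1,x_1)\cap\T(\bt_2,x_2)$), so it is enough to show that every open subset of $\T_0\cup\T'_1$, in the relative topology, is a countable union of sets $A\cap(\T_0\cup\T'_1)$ with $A\in\cf$. Since the topology is generated by the countably many balls $\{r_h=\bs\}$, $\bs\in\T_0$, this reduces to producing, for every $\bt\in\T_0\cup\T'_1$ and every $h$, a set $A\in\cf$ with $\bt\in A\subset\{r_h=r_h(\bt)\}$. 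For $\bt\in\T_0$, take $A=\{\bt\}$. For $\bt\in\T'_1$ — and here is precisely where the restriction to $\T'_1$ rather than $\T_1$ is used — the infinite spine $(v_n)_{n\ge 0}$ of $\bt$ visits the root type $\cm(v_0)$ infinitely often, so one can choose $n\ge h$ with $\cm(v_n)=\cm(v_0)$; setting $\bs_\bt=\{u\in\bt:\,v_n\not\prec u\}$, a finite tree with $v_n$ as a leaf of the correct type, gives $\bt\in\T(\bs_\bt,v_n)\subset\{r_h=r_h(\bt)\}$ with $\T(\bs_\bt,v_n)\in\cf$. Since the $\bs_\bt$ run over the countable set $\T_0$, the resulting cover is countable and Billingsley's theorem concludes. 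Your own intuition about the role of $\T'_1$ was pointing in the right direction, but the argument you wrote down — a finite inclusion--exclusion at each level $h$ — does not realize it.
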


We deduce the following corollary. 
\begin{cor}
   \label{cor:cv}
Let $(T_n, n\in \N^*)$ and $T$ be random variables taking values in
$\T_0\bigcup  \T'_1$. Then the
sequence $(T_n, n\in \N^*)$ converges in distribution towards  $T$ if and
only if we have for all $\bt\in \T_0$ $\lim_{n\rightarrow+\infty }
\P(T_n=\bt)=\P(T=\bt)$ and for all  $x\in\mathcal
{L}_0(\bt) $ such that $\mathcal{M}(\emptyset_\bt)=\mathcal{M}(x)$:
\[
\lim_{n\rightarrow+\infty } \P(T_n\in \T(\bt, x))=\P(T\in \T(\bt, x)).
\]
\end{cor}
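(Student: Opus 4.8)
The plan is to deduce Corollary \ref{cor:cv} from Lemma \ref{conv-determ} using the standard characterization of weak convergence via convergence-determining classes, taking care of the subtlety that the underlying space is $\T_0\cup\T'_1$ rather than the full Polish space $\T$. First I would recall that a family $\cf$ of Borel sets is \emph{convergence determining} on a topological space $E$ when, for probability measures $\mu_n,\mu$ on $E$, the convergence $\mu_n(A)\to\mu(A)$ for every $A\in\cf$ with $\mu(\partial A)=0$ implies $\mu_n\Rightarrow\mu$ (and the converse always holds). Since every element of $\cf$ is of the form $\{\bt\}$ with $\bt\in\T_0$ (which is both open and closed in $\T$, hence has empty boundary and also empty boundary in the subspace $\T_0\cup\T'_1$) or $\T(\bt,x)$ with $\bt\in\T_0$, which by the remark in the text ``For $\bt\in\T_0$, it is easy to see that $\mathbb T(\bt,x)$ is closed and also open'' is clopen and therefore has empty boundary as well, the side condition $\mu(\partial A)=0$ is automatically satisfied for every $A\in\cf$. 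Thus, once we know $\cf$ is convergence determining on $\T_0\cup\T'_1$, the statement $\mu_n(A)\to\mu(A)$ for all $A\in\cf$ is equivalent to $\mu_n\Rightarrow\mu$.

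Next I would translate the condition ``$\P(T_n\in A)\to\P(T\in A)$ for all $A\in\cf$'' into the two families of conditions in the corollary. The sets $\{\bt\}$ for $\bt\in\T_0$ give exactly $\lim_n\P(T_n=\bt)=\P(T=\bt)$ for all $\bt\in\T_0$. The sets $\T(\bt,x)$ for $\bt\in\T_0$, $x\in\mathcal L_0(\bt)$ with $\mathcal M(\emptyset_\bt)=\mathcal M(x)$ give exactly $\lim_n\P(T_n\in\T(\bt,x))=\P(T\in\T(\bt,x))$. So the hypothesis listed in the corollary is literally ``$\P(T_n\in A)\to\P(T\in A)$ for all $A\in\cf$'', and we are done: the forward implication (convergence in distribution implies convergence on each $A\in\cf$) follows because each $A\in\cf$ has empty boundary, hence is a $\mu$-continuity set for the law $\mu$ of $T$, and the Portmanteau theorem applies; the reverse implication is precisely the definition of $\cf$ being convergence determining, supplied by Lemma \ref{conv-determ}.

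One small point deserves attention: the Portmanteau theorem and the notion of convergence-determining class are usually stated for a Polish space, whereas $\T_0\cup\T'_1$ need not be closed in $\T$ (it is a Borel subset). I would handle this either by working intrinsically in the metric subspace $(\T_0\cup\T'_1,d)$ — noting that weak convergence of laws supported on this subspace is the same whether computed in the subspace or in $\T$, since the embedding is continuous and the relevant test sets $A\in\cf$ are Borel in both — or, more simply, by observing that Lemma \ref{conv-determ} is already stated for $\T_0\cup\T'_1$, so ``convergence determining on $\T_0\cup\T'_1$'' has a precise meaning there, and the clopen-ness of the sets in $\cf$ within $\T$ passes to the subspace. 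I expect this topological bookkeeping — making sure the Portmanteau implication and the empty-boundary claim are valid on the non-closed subspace $\T_0\cup\T'_1$ — to be the only genuinely delicate step; everything else is a direct unwinding of definitions. I would close by remarking, as in \cite{AD14a}, that the key structural fact making this work is that each grafting set $\T(\bt,x)$ is simultaneously open and closed, so no boundary terms ever arise.
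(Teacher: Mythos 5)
Your proposal is correct and takes essentially the same route as the paper, which simply deduces Corollary \ref{cor:cv} directly from Lemma \ref{conv-determ} together with the fact that every set in $\cf$ is clopen (hence a continuity set) and the Portmanteau theorem. Your extra discussion of the topological bookkeeping on the non-closed subspace $\T_0\cup\T'_1$ is a reasonable elaboration of what the paper leaves implicit, but it does not change the argument.
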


\subsection{Aperiodic distribution}

Let  us consider  a probability  distribution $F=(F(x),  x\in \Z^d)$  on
$\Z^d$.  In order to avoid degenerate cases, we assume that there exists
$x_0\in\Z^d$ such  that:
\begin{equation}
   \label{nondegenerate}
 0<F(x_0)<1.  
 \end{equation} 
Denote by $\supp(F)=\{x\in\Z^d, F(x)>0\}$ the support set of $F$ and by
$R_0$ the smallest subgroup  of $\Z^d$ which contains the set
$\supp(F)-\supp(F)$. 

\begin{defi}\label{defi:aperiodic}
A distribution $F$ on $\Z^d$ is called aperiodic if $R_0=\Z^d$.
\end{defi}

For $x\in \Z^d$, let $G_x$  be the  smallest subgroup of  $\Z^d$ that
  contains $  -x+\supp(F)$. According to the next lemma, an aperiodic
  distribution  is called strongly aperiodic in 
\cite[p.42]{S01}. 

\begin{lem}
   \label{lem:ape}
 If $x\in \supp(F)$, then $G_x=R_0$. The distribution
 $F$ is aperiodic if and only if $G_x=\Z^d$ for some $x\in \supp(F)$ or
 equivalently if and only if $G_x=\Z^d$ for all $x\in \Z^d$.
\end{lem}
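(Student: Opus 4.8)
The plan is to exploit the translation structure of the subgroups $G_x$ and then reduce the "aperiodic $\iff$ $G_x=\Z^d$" equivalences to Definition \ref{defi:aperiodic}. First I would prove the key claim: if $x,y\in\supp(F)$, then $G_x=G_y$. Indeed, for any $z\in\supp(F)$ we have $z-x=(z-y)+(y-x)$ and $y-x=(y-x)+0\in\langle -x+\supp(F)\rangle$ since $x,y\in\supp(F)$ (here $\langle\cdot\rangle$ denotes the generated subgroup); hence $-x+\supp(F)\subset G_y$, so $G_x\subset G_y$, and by symmetry $G_x=G_y$. Write $H$ for this common subgroup. Next I would identify $H$ with $R_0$: on one hand, for $z,z'\in\supp(F)$ we have $z-z'=(z-x)-(z'-x)\in H$, so $\supp(F)-\supp(F)\subset H$ and therefore $R_0\subset H$; on the other hand, fixing any $x\in\supp(F)$, every generator $z-x$ of $H$ with $z\in\supp(F)$ lies in $\supp(F)-\supp(F)\subset R_0$, so $H\subset R_0$. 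This gives $G_x=R_0$ for all $x\in\supp(F)$, which is the first assertion.

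With that in hand the equivalences are almost immediate. By Definition \ref{defi:aperiodic}, $F$ is aperiodic iff $R_0=\Z^d$, and by the first assertion this holds iff $G_x=\Z^d$ for one (equivalently every) $x\in\supp(F)$, since by assumption \reff{nondegenerate} $\supp(F)$ is nonempty. It remains to pass from "$x\in\supp(F)$" to "$x\in\Z^d$" in the last clause. For arbitrary $x\in\Z^d$, pick $x_1\in\supp(F)$; then $G_{x_1}\subset G_x$, because each generator $z-x_1$ of $G_{x_1}$ can be written $z-x_1=(z-x)-(x_1-x)$ with both $z-x$ and $x_1-x$ in $G_x$. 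Hence if $F$ is aperiodic then $G_{x_1}=\Z^d$ forces $G_x=\Z^d$ for every $x\in\Z^d$; and conversely "$G_x=\Z^d$ for all $x\in\Z^d$" trivially implies "$G_x=\Z^d$ for some $x\in\supp(F)$", closing the circle of equivalences.

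I do not expect any serious obstacle here; the only mild subtlety is to keep track of which inclusions between the $G_x$'s are free (the inclusion $G_{x_1}\subset G_x$ whenever $x_1\in\supp(F)$ and $x\in\Z^d$ arbitrary holds automatically, whereas the reverse needs $x\in\supp(F)$ too). Being careful to invoke \reff{nondegenerate} to guarantee $\supp(F)\neq\emptyset$ — so that "for some $x\in\supp(F)$" is not vacuous — is the one point where the non-degeneracy hypothesis is genuinely used.
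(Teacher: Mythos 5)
Your argument is correct and rests on the same algebraic device as the paper's proof, namely the telescoping identity $a-b=(a-x)-(b-x)$ used to compare the generating sets of $R_0$ and $G_x$; the paper applies it in one stroke to get $R_0\subset G_x$ for every $x\in\Z^d$, whereas you first establish $G_x=G_y=R_0$ for $x,y\in\supp(F)$ and then separately deduce $G_{x_1}\subset G_x$ for $x_1\in\supp(F)$ and general $x$, but the content is the same. (One small slip: in your key claim the displayed membership should read $y-x\in\langle -y+\supp(F)\rangle=G_y$, not $\langle -x+\supp(F)\rangle$; the inclusion $G_x\subset G_y$ you are proving requires landing the generators of $G_x$ inside $G_y$.)
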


\begin{proof}
 Let $x\in \Z^d$. 
Let $z\in  R_0$. There exists  $n, n'\in
  \N$  and  $x_i, x'_i, y_i,y'_i\in  \supp(F)$   for  all $i\in  \N  ^*$  such  that
  $\sum_{i=1}^n  (y_i-x_i)  -   \sum_{i=1}^{n'}  (y'_i-x'_i)  =z$. 
This  implies  that $\sum_{i=1}^n  (y_i-x) + \sum_{i=1}^{n'}  (x'_i-x) 
- \sum_{i=1}^n  (x_i-x) - \sum_{i=1}^{n'}  (y'_i-x)  =z$ and thus
$z\in G_{x}$. This gives $R_0\subset G_x$. 

 For $x\in \supp(F)$, we get $G_{x}\subset
 R_0$ and thus $G_{x}=R_0$. The end of the lemma is  obvious. 
\end{proof}





\subsection{Multi-type offspring distribution}

We define  a multi-type  offspring distribution  $p$ of  $d$ types  as a
sequence  of  probability  distributions: $p=(p^{(i)},  i\in[d])$,  with
$p^{(i)}=(p^{(i)}(k), k\in \N^d)$ a  probability distribution on $\N^d$.
Denote by $f=(f^{(1)}, \ldots, f^{(d)})$  the generating function of the
offspring distribution 
$p$, i.e.  for $i\in[d]$ and $s\in[0,1]^d$:
\begin{equation}
   \label{eq:genfunc}
f^{(i)}(s)  =\E[s^{X_i}], 
\end{equation}
with  $X_i=(X_i^{(j)},  j\in [d])$  a  random  variable on  $\N^d$  with
distribution    $p    ^{(i)}$.    Denote    by    $m_{ij}=\partial_{s_j}
  f^{(i)}(\ind)=\E[X_i^{(j)}]   \in[0,+\infty]$    the
expected number  of  offsprings with type $j$ of a single individual  of type
$i$.  Denote by  $M$ the mean matrix $M=(m_{ij}; \,i,  j\in[d])$ and set
$(m_{ij}^{(n)};   \,i,  j\in[d])=M^n$   for   $n\in  \N^*$.    Following
\cite[p.184]{AN72}, we say that:
\begin{itemize}
   \item[-] $p$ is non-singular if $f(s)\neq M s$.
\item[-]  $M$ is finite if $m_{ij} <+\infty $ for all $i, j\in[d]$.
\item[-] $M$  is primitive if $M$  is finite  and  there exists $n\in
  \N^*$ such that for all $i, j\in[d]$, $m_{ij}^{(n)}>0$.
 \end{itemize} 
 By the Frobenius theorem, see \cite[p.185]{AN72}, if $M$ is primitive, then
 $M$  has  a  unique  maximal  (for  the  modulus  in  $\C$)  eigenvalue
 $\rho$. Furthermore  $\rho$ is  simple, positive ($\rho\in  (0, +\infty
 )$), and  the corresponding right  and left eigenvectors can  be chosen
 to be positive.  If  $\rho=1$ (resp.   $\rho>1,$ $\rho<1$),  we say  that the
 offspring  distribution  and  the  associated multi-type  GW  tree  are
 critical (resp. supercritical, subcritical).

Recall the definition of an aperiodic  distribution
given in Definition \ref{defi:aperiodic}. 
\begin{defi}
\label{defi:aperiodic-p} 
Let  $p=(p^{(i)},
i\in[d])$ be an offspring distribution.
We say that  $p$ is  aperiodic, if  the smallest subgroup of $\Z^d$ that
contains $\bigcup _{i=1}^d \left(\supp(p^{(i)}) -\supp(p^{(i)})\right)$
is $\Z^d$. 
\end{defi}

For an offspring distribution $p$, we shall consider the following assumptions:
\begin{itemize}
  \item[($H_1$)] \textbf{The mean matrix $M$ of $p$ is primitive, and  $p$ is critical
    and non-singular.}
  \item[($H_2$)] \textbf{The offspring distribution $p$ is aperiodic.}
\end{itemize}

\subsection{Multi-type Galton-Watson tree and Kesten's tree}

We define the multi-type GW tree $\tau$ with offspring distribution $p$.
\begin{defi}
Let $p$ be an offspring distribution of $d$ types and $\alpha$ a
probability distribution on $[d]$. A $\mathbb{T}$-valued random
variable $\tau$ is a multi-type GW  tree with offspring distribution $p$ and root
type distribution $\alpha$, if for
all $h \in \N$, $\bt\in \T^{(h)}$, we have:
\[
\P_\alpha(r_h(\tau)=\bt)=\alpha(\cm(\emptyset_\bt)) \prod_{u\in \bt, |u|<h}
\frac{k_u^{(1)}[\bt]!\cdots k_u^{(d)}[\bt]!}{k_u[\bt]!}
\, p^{(\mathcal{M}(u))}(k_u(\bt)).
\]
\end{defi}

We deduce from the definition that for $\bt\in\mathbb{T}_0$, we have
\[
\P_\alpha(\tau=\bt)=\alpha(\cm(\emptyset_\bt)) \prod_{u\in \bt}
\frac{k_u^{(1)}[\bt]!\cdots k_u^{(d)}[\bt]!}{k_u[\bt]!}\, 
p^{(\mathcal{M}(u))}(k_u(\bt)).
\]
The multi-type GW tree enjoys the branching property: an
individual  of type $i$ generates children according to $p^{(i)}$
independently of any born individual, for $i\in[d]$. 

Let $p$  be an  offspring distribution  of $d$  types such  that ($H_1$)
holds.  Denote by  $a^*$ (resp. $a$) the right  (resp. left) positive
normalized  eigenvector  of $M$  such  that  $\langle a,  \ind  \rangle=
\langle  a,   a^*\rangle=1$. Those eigenvectors correspond to the
eigenvalue $\rho=1$.  Notice   that  $a$  is   a  probability
distribution   on  $[d]$.    The  corresponding   size-biased  offspring
distribution  $\hat{p}=(\hat{p}^{(i)},i\in[d])$   is  defined   by:  for
$i\in[d]$ and $k\in\N^d$,
\begin{equation}
   \label{eq:hat-p}
\hat{p}^{(i)}(k)=\frac{\langle k, a^*\rangle}{ a^*_i}\, p^{(i)}(k) .
\end{equation}
For $\alpha$ a probability distribution on $[d]$, we also define the
corresponding size-biased distribution $\hat \alpha=(\hat \alpha(i),
i\in [d])$ by,   for
$i\in[d]$:
\begin{equation}
   \label{eq:hat-a}
\hat\alpha (i)= \alpha(i) \frac{a^*_i }{\langle \alpha,
  a^*\rangle}\cdot 
\end{equation}

\begin{defi}
  Let  $p$ be  an offspring  distribution of  $d$ types whose mean
  matrix is primitive  and let $\alpha$ be  a
  probability  distribution   on  $[d]$.  A  multi-type   Kesten's  tree
  $\tau^*$  associated   with  the  offspring  distribution   $p$ and with
  the root type
  distribution $\alpha$ is defined as follows:
\begin{itemize}
  \item[-] Marked individuals are normal or special.
  \item[-] The root of $\tau^*$ is special and its type has
    distribution $\hat \alpha$.
  \item[-] A normal individual of type $i\in [d]$ produces only normal
    individuals according to $p^{(i)}$. 
  \item[-] A  special individual of  type $i\in [d]$  produces children
    according  to $\hat{p}^{(i)}$.  One of  those children,  chosen with
    probability  proportional to  $a^*_j$ where  $j$ is  its type,  is
    special.  The others (if any) are normal.
\end{itemize}
\end{defi}

Notice that  the multi-type  Kesten's  tree  is a  multi-type  GW tree (with
$2d$ types).   The
individuals which are special in $\tau^*$ form an infinite spine, say
$\mathbf{v}^*$,    of   $\tau^*$;    and   the    individuals   of
$\tau^*\backslash \mathbf{v}^* $ are normal.

Let $r\in [d]$.  We shall write $\P_r(d\tau)$, resp. $\P_r(d\tau^*)$,
for the distribution of $\tau$, resp.  $\tau^*$, when the type of its
root is $r$ (that is $\alpha=\delta_r$  the Dirac mass at $r$).  From
\cite{KLPP97}, we get that  for $h\in \N$, $\bt\in\mathbb{T}^{(h)}$ with
$\mathcal{M}(\emptyset_\bt)=r$,   and   $x\in   \mathcal{L}_0(\bt)$   with
$|x|=h$ and $\cm(x)=i$:
\begin{equation}
   \label{eq:K-rh}
\P_{r}(r_h(\tau^*)=\bt, \, v_h^*=x)=\frac{a^*_i}{a^*_{r}}\,
\P_{r}(r_h(\tau)=\bt). 
\end{equation}

Notice that  if $M$ is  primitive and  $p$ is critical  or sub-critical,
then a.s.  Kesten's tree $\tau^*$ belongs to $\mathbb{T}_1$. The next
lemma asserts that there are infinitely many individuals of all types on
the infinite spine.

\begin{lem}
\label{lem:t'1}
  Let $p$ be  an offspring distribution of $d$  types satisfying ($H_1$)
  and  $\alpha$ a  probability  distribution on  $[d]$.  Then a.s.   the
  multi-type Kesten tree $\tau^*$ belongs to $\mathbb{T}'_1$.
\end{lem}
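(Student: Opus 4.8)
The plan is to track the types along the infinite spine $\mathbf{v}^*$ of $\tau^*$ and show they form a recurrent Markov chain on the finite state space $[d]$, so that every state is visited infinitely often almost surely. First I would identify the transition mechanism: by the definition of Kesten's tree, if a special individual has type $i$, it produces children according to $\hat p^{(i)}$ and then the special child is chosen among them with probability proportional to $a^*_j$ in its type $j$. Combining this with \reff{eq:hat-p}, the probability that the next spine individual has type $j$, given the current one has type $i$, is
\[
Q_{ij}=\sum_{k\in\N^d}\hat p^{(i)}(k)\,\frac{k_j a^*_j}{\langle k,a^*\rangle}
=\sum_{k\in\N^d}\frac{\langle k,a^*\rangle}{a^*_i}\,p^{(i)}(k)\,\frac{k_j a^*_j}{\langle k,a^*\rangle}
=\frac{a^*_j}{a^*_i}\sum_{k\in\N^d}k_j\,p^{(i)}(k)=\frac{a^*_j}{a^*_i}\,m_{ij}.
\]
So the type sequence $(\cm(v_n^*),n\in\N)$ along the spine is a Markov chain with transition matrix $Q=(Q_{ij})$, started from $\hat\alpha$. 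Note each $Q_{ij}$ is finite since $M$ is finite under ($H_1$), and $\sum_j Q_{ij}=\frac1{a^*_i}\sum_j m_{ij}a^*_j=\frac{(Ma^*)_i}{a^*_i}=\frac{\rho\,a^*_i}{a^*_i}=1$ because $\rho=1$; thus $Q$ is a genuine stochastic matrix.

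Next I would invoke primitivity of $M$: since $M$ is primitive there is $n_0$ with $m_{ij}^{(n_0)}>0$ for all $i,j$, and one checks by induction that $Q^n$ has the same zero pattern as $M^n$ (indeed $(Q^n)_{ij}=\frac{a^*_j}{a^*_i}m_{ij}^{(n)}$, which follows immediately from the formula for $Q$ and associativity of matrix products together with the fact that $a^*_i,a^*_j>0$). Hence $Q^{n_0}_{ij}>0$ for all $i,j\in[d]$, so $Q$ is irreducible (in fact aperiodic) on the finite set $[d]$. A finite irreducible Markov chain is recurrent, so almost surely each type $i\in[d]$ appears infinitely often in $(\cm(v_n^*),n\in\N)$, regardless of the initial distribution $\hat\alpha$. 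Combined with the already-noted fact (from the remark preceding the lemma) that a.s. $\tau^*\in\mathbb{T}_1$ when $M$ is primitive and $p$ is critical, this gives a.s. $\tau^*\in\mathbb{T}'_1$ by the very definition of $\mathbb{T}'_1$.

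The only genuinely delicate point is justifying that the type sequence along the spine really is a time-homogeneous Markov chain with the stated transitions and that one may take the limit "infinitely many children-choices" cleanly; but this is essentially built into the construction of $\tau^*$ as a $2d$-type GW tree with a single infinite spine — at each spine vertex the choice of the next spine child and its type depends only on the current type, and these choices are independent across spine levels by the branching property. So the main work is the bookkeeping identity $(Q^n)_{ij}=\frac{a^*_j}{a^*_i}m_{ij}^{(n)}$ and the observation that $Q$ inherits primitivity from $M$; everything else is the standard recurrence of finite irreducible chains. I would also remark in passing that $a$, the normalized left eigenvector, is the stationary distribution of $Q$, since $(aD)Q = a D \cdot D^{-1} M D = a M D = a D$ where $D=\operatorname{diag}(a^*)$ — a sanity check but not needed for the proof.
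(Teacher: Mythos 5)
Your proof is correct and follows essentially the same route as the paper: identify the types along the spine as a Markov chain with transition matrix $Q_{ij}=\frac{a^*_j}{a^*_i}m_{ij}$, observe that $Q$ inherits primitivity from $M$ since $a^*>0$, and conclude recurrence of the finite chain so every type is visited infinitely often a.s. The additional checks you include (that $Q$ is stochastic because $\rho=1$, the explicit identity $(Q^n)_{ij}=\frac{a^*_j}{a^*_i}m_{ij}^{(n)}$, and the stationary-distribution sanity check) are useful but not required beyond what the paper states.
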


\begin{proof}
Recall  that $a^*=(a^*_i, i\in[d])$ is the normalized right
eigenvalue of $M$ such that $\langle a^*, a \rangle=1$.
By  construction, the sequence  $(\mathcal{M}(v_n^*), n\in\N)$ is
a Markov chain on $[d]$ and transition matrix $Q=(Q_{i,j}, \, i,j\in
[d])$ given by 
\[
Q_{i,j}= \P(\mathcal{M}(v_1^*)=j|\, \mathcal{M}(v_0^*)=i)=
\sum_{k=(k_1, \ldots, k_d)\in \N^d}  \frac{ k_j
  a^*_j}{\langle k, a^*\rangle}\, \hat{p} ^{(i)}(k) = \frac{
  a^*_j}{a^*_i}
\, m_{i,j}, 
\]
where we used  \reff{eq:hat-p} for the definition of $\hat p$ and the
definition of the mean matrix $M$ for the last equality. 
Since $a^*$ is positive and $M$ is primitive, we deduce that $Q$ is
also primitive. This implies that the Markov chain $(\mathcal{M}(v_n^*),
n\in\N)$ is recurrent on $[d]$ and hence it visits a.s. infinitely many
times all the states of $[d]$. 
\end{proof}

The   next  lemma   will  be   used  in   the  proof   of     Theorem
\ref{dTheorem}. In  the next lemma,  we shall consider  a leaf $x$  of a
finite tree $\bt$  with type $i$ and  the root of type  $r$. However, we
will only use the case $i=r$ in the proof of  Theorem \ref{dTheorem}.

\begin{lem}\label{key1}
  Let $p$ be  an offspring distribution of $d$  types satisfying ($H_1$)
  and $r\in [d]$.   Let $\tau$ be a GW tree  with offspring distribution
  $p$  and  $\tau^*$ be  a  Kesten's  tree  associated with  $p$.  For
  all   $\bt\in\mathbb{T}_0$   with  $\mathcal{M}(\emptyset_\bt)=r$,  
  $x\in\mathcal {L}_0(\bt)$ with  $\mathcal{M}(x)=i\in[d]$,  and
  $k\in\mathbb{N}^d$ such that $k\geq |\bt|$,         we have:
\begin{equation}
   \label{eq:cond-global}
  \P_{r}(\tau\in\mathbb{T}(\bt,x)\big|\, |\tau|=k)
  =       \frac{a^*_{r}}{ a^*_i}\, \frac{
    \P_i(|\tau|=k-|\bt|+\mathbf{e}_i)}{
    \P_{r}(|\tau|=k)}\, \P_{r}(\tau^*\in\mathbb{T}(\bt,x)) .
\end{equation}
\end{lem}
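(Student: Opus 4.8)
The plan is to compute both sides of \reff{eq:cond-global} by reducing everything to unconditioned probabilities $\P_\bullet(\tau = \bt')$ for finite trees, using the explicit product formula for the law of a multi-type GW tree together with the grafting operation $\otimes$. The key observation is that for $\bt \in \T_0$ and $x \in \mathcal{L}_0(\bt)$, the event $\{\tau \in \T(\bt,x)\}$ decomposes as a disjoint union over $\bt' \in \T$ with $\mathcal{M}(\emptyset_{\bt'}) = \mathcal{M}(x) = i$ of the events $\{\tau = \bt \otimes (\bt', x)\}$, and by the branching property this factorizes: $\P_r(\tau = \bt \otimes (\bt',x)) = C(\bt,x)\, \P_i(\tau = \bt')$, where $C(\bt,x)$ collects the product of the combinatorial factors $\frac{k_u^{(1)}[\bt]!\cdots k_u^{(d)}[\bt]!}{k_u[\bt]!}\, p^{(\mathcal{M}(u))}(k_u(\bt))$ over $u \in \bt$ (note that $x$ is a leaf of $\bt$ but not of $\bt\otimes(\bt',x)$, so the factor at $x$ is now supplied by $\bt'$, and the root factor $\alpha(\mathcal{M}(\emptyset_\bt))$ is replaced appropriately).

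First I would make the size bookkeeping precise: if $\mathcal{M}(x) = i$ and $\bt' \in \T_0$, then $|\bt \otimes (\bt',x)| = |\bt| + |\bt'| - \mathbf{e}_i$, since the individual $x$ of type $i$ is identified with the root of $\bt'$ (also of type $i$). Hence
\[
\P_r(\tau \in \T(\bt,x),\ |\tau| = k) = C(\bt,x) \sum_{\bt' \in \T_0,\, |\bt'| = k - |\bt| + \mathbf{e}_i} \P_i(\tau = \bt') = C(\bt,x)\, \P_i(|\tau| = k - |\bt| + \mathbf{e}_i),
\]
which requires $k - |\bt| + \mathbf{e}_i \geq \mathbf{e}_i$, i.e. $k \geq |\bt|$, exactly the hypothesis. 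Dividing by $\P_r(|\tau|=k)$ gives the left-hand side as $C(\bt,x)\, \P_i(|\tau| = k-|\bt|+\mathbf{e}_i) / \P_r(|\tau|=k)$.

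Next I would compute $\P_r(\tau^* \in \T(\bt,x))$. Since $\bt$ is finite and $x$ is a leaf, the event $\{\tau^* \in \T(\bt,x)\}$ forces $r_h(\tau^*)$ to contain $\bt$ (for $h = H(\bt)$) and $x$ to lie on the spine with $v^*_{|x|} = x$; more precisely, using $\otimes$, $\{\tau^* \in \T(\bt,x)\} = \{r_{H(\bt)}(\tau^*) = \bt,\ v^*_{|x|} = x\}$ up to the fact that every individual of $\bt$ other than the spine-ancestors of $x$ is normal --- but that is automatic given $r_{H(\bt)}(\tau^*) = \bt$ and $v^*_{|x|} = x$, since $x$ being the unique leaf forces the spine to pass through $x$. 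Applying \reff{eq:K-rh} with this $\bt$ and this $x$ (type $i$, height $|x| = H(\bt)$ — or after truncating $\bt$ at the level of $x$; I'd verify $|x| = H(\bt)$ is not actually needed because one can first restrict to $r_{|x|}$ and the remaining structure above $x$ carries no spine) yields
\[
\P_r(\tau^* \in \T(\bt,x)) = \frac{a^*_i}{a^*_r}\, \P_r(r_{H(\bt)}(\tau^*)\text{-type event}) = \frac{a^*_i}{a^*_r}\, \P_r(\tau \in \T(\bt,x)) = \frac{a^*_i}{a^*_r}\, C(\bt,x).
\]
Combining the two displays eliminates $C(\bt,x)$ and produces exactly \reff{eq:cond-global}.

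**The main obstacle** I anticipate is the careful handling of \reff{eq:K-rh} when $x$ is a leaf of $\bt$ that is \emph{not} at maximal height, or when $\bt$ has structure strictly above height $|x|$: \reff{eq:K-rh} as stated is for leaves $x$ with $|x| = h = H(\bt)$. The clean fix is to observe $\{\tau^* \in \T(\bt, x)\} = \{\tau^* \in \T(r_{|x|}(\bt), x)\} \cap \{\text{the part of }\bt\text{ above height }|x|\text{ appears}\}$, decompose along the spine at $x$, and apply the branching property of $\tau^*$ (it is itself a $2d$-type GW tree) to peel off the normal subtrees hanging off $\bt$ above level $|x|$; each such subtree contributes its ordinary GW weight, matching the corresponding factor in $C(\bt,x)$. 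Equivalently, one applies \reff{eq:K-rh} to $r_{|x|}(\bt)$ and then uses the branching property to graft the rest of $\bt$ (all normal) onto $x$. I would write this reduction explicitly since it is the only non-mechanical point; everything else is substitution into the product formulas.
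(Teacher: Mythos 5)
Your proof is correct and follows essentially the same route as the paper: decompose $\{\tau\in\T(\bt,x),\,|\tau|=k\}$ over grafted trees $\bt'$ at $x$, use the branching property to factor out $\P_i(|\tau|=k-|\bt|+\mathbf{e}_i)$, and convert $\P_r(\tau\in\T(\bt,x))$ into $\P_r(\tau^*\in\T(\bt,x))$ via the identity $\P_r(\tau^*\in\T(\bt,x))=\frac{a^*_i}{a^*_r}\P_r(\tau\in\T(\bt,x))$, which is \reff{eq:K-T} in the paper. The only place you add content is in spelling out the reduction to \reff{eq:K-rh} when $|x|<H(\bt)$ (restrict to $r_{|x|}(\bt)$, then use the branching property of $\tau^*$ to peel off the normal subtrees above level $|x|$); the paper asserts \reff{eq:K-T} with the phrase ``in the same spirit of \reff{eq:K-rh}'' and omits this detail, so your explicit treatment is a welcome clarification rather than a departure.
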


\begin{proof}
  Since $\tau^*$  has a  unique infinite spine  $\mathbf{v}^*$ and
  $\bt\in\mathbb{T}_0$, we deduce that $\tau^* \in \T(\bt,x)$ implies
  that $x$ belongs to $\mathbf{v}^*$
  and we get in the same spirit of \reff{eq:K-rh} that:
\begin{equation}
   \label{eq:K-T}
\P_{r}(\tau^*\in\mathbb{T}(\bt,x))=\frac{a^*_i}{a^*_{r }}\,
\P_{r}(\tau\in\mathbb{T}(\bt,x)). 
\end{equation}
We have, following the ideas of \cite{AD14a}:
\begin{align*}
\P_{r}(\tau\in\mathbb{T}(\bt,x),\, |\tau|=k)
&=\sum_{\bt'\in\mathbb{T}_0}\P_{r}(\tau=\bt\otimes(\bt',x))
\ind_{\{| \bt\otimes(\bt',x)|=k\}}  \\
&=\sum_{\bt'\in\mathbb{T}_0}\P_{r}(\tau\in\mathbb{T}(\bt,x)) \, 
\P_i(\tau=\bt')\ind_{\{|\bt\otimes(\bt',x)|=k\}}\\
&=\P_{r}(\tau\in\mathbb{T}(\bt,x))\sum_{\bt'\in\mathbb{T}_0}
\P_i(\tau=\bt')\ind _{\{|\bt'|=k-|\bt|+\mathbf{e}_i\}}\\
&=\P_r(\tau\in\mathbb{T}(\bt,x))\P_i(|\tau|=k-|\bt|+\mathbf{e}_i), 
\end{align*}
where we used the branching property of the multi-type GW tree for the
second equality. 
Use \reff{eq:K-T} to deduce \reff{eq:cond-global}. 
\end{proof}

\section{Main results}
\label{sec:main}

\subsection{Conditioning on the total population size}

Recall that under   ($H_1$), we denote by
$a=(a_\ell,\ell\in [d])$ and $a^*=(a^*_\ell, \ell\in [d])$ the
 positive normalized left and right eigenvectors of the mean matrix $M$ associated
with the eigenvalue  $\rho=1$ such that $\langle a, a^*\rangle=\sum
a_i=1$. 
The proof of the following main theorem is given in Section
\ref{sec:proof-main}.

\begin{theo}
\label{dTheorem}
Assume that  ($H_1$) and ($H_2$)  hold.  Let $(k(n), n\in\N^*)$  be a
sequence of $\N^d$ satisfying $\lim_{n\rightarrow\infty}{|k(n)|}=+\infty
$ and  $\lim_{n\rightarrow\infty}{k(n)}/{|k(n)|} =a$.   Let $\tau$  be a
random GW tree with critical  offspring distribution $p$ and root
type distribution $\alpha$, and $\tau_n$ be
distributed  as $\tau$  conditionally  on  $\{|\tau|=k(n)\}$.  Then  the
sequence  $(\tau_n, n\in  \N  ^  *)$ converges  in  distribution to  the
Kesten's tree $\tau^*$ associated with $p$ and $\alpha$.
\end{theo}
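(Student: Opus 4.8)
The plan is to use the convergence-determining criterion of Corollary~\ref{cor:cv}, so it suffices to show, for every finite tree $\bt \in \mathbb{T}_0$ with $\mathcal{M}(\emptyset_\bt) = r$ (it is enough to treat a deterministic root type $r$ and then average against $\alpha$, keeping in mind the root type of $\tau^*$ is $\hat\alpha$), that
\[
\lim_{n\to\infty} \P_r(\tau_n = \bt) = 0
\qquad\text{and}\qquad
\lim_{n\to\infty} \P_r\big(\tau_n \in \mathbb{T}(\bt,x)\big) = \P_r\big(\tau^* \in \mathbb{T}(\bt,x)\big)
\]
for every leaf $x \in \mathcal{L}_0(\bt)$ with $\mathcal{M}(x) = r = \mathcal{M}(\emptyset_\bt)$. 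The first limit is easy: $\P_r(\tau_n = \bt) = 0$ as soon as $|k(n)| > |\bt|$ componentwise fails to be an equality, and since $|k(n)| \to \infty$ this holds for all large $n$. The heart of the matter is the second limit, and here Lemma~\ref{key1} gives the exact identity
\[
\P_r\big(\tau \in \mathbb{T}(\bt,x)\,\big|\,|\tau| = k(n)\big)
= \frac{a^*_r}{a^*_r}\,\frac{\P_r\big(|\tau| = k(n) - |\bt| + \mathbf{e}_r\big)}{\P_r\big(|\tau| = k(n)\big)}\,\P_r\big(\tau^* \in \mathbb{T}(\bt,x)\big),
\]
using $i = r$. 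So everything reduces to proving the \emph{strong ratio limit}
\[
\lim_{n\to\infty} \frac{\P_r\big(|\tau| = k(n) - |\bt| + \mathbf{e}_r\big)}{\P_r\big(|\tau| = k(n)\big)} = 1,
\]
i.e. that shifting the target population vector by the fixed vector $\mathbf{e}_r - (|\bt| - \mathbf{e}_r) \cdot (\text{something})$... more precisely by the fixed integer vector $\mathbf{e}_r - |\bt| + \mathbf{e}_r$, which has zero asymptotic contribution relative to $k(n)$, does not change the leading asymptotics of the total-progeny-by-type probability.

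To establish this ratio limit I would invoke the two ingredients announced in the introduction. First, the multi-type Dwass-type formula of Chaumont and Liu \cite{CL13}: the probability $\P_r(|\tau| = k)$ that a multi-type GW forest has population profile exactly $k$ can be written, via a cyclic-lemma / ballot-type argument, in terms of the local probabilities of $d$ independent $d$-dimensional random walks whose step distributions are the (recentered) offspring laws $p^{(1)},\dots,p^{(d)}$. Concretely $\P_r(|\tau| = k)$ is (up to an explicit combinatorial prefactor depending on $k$ in a controlled, asymptotically negligible way) a value of the form $\P(S^{(1)}_{k_1} + \dots + S^{(d)}_{k_d} = -\mathbf{e}_r + \text{something})$ for suitable random walks $S^{(j)}$ with i.i.d. steps distributed as $X_j - \mathbf{e}_j$ (the "$-1$ on the diagonal" bookkeeping). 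The criticality hypothesis ($\rho = 1$) ensures these walks are centered in the Perron-Frobenius direction, the primitivity plus aperiodicity hypotheses ($H_1$)--($H_2$) make the combined walk genuinely $d$-dimensional and aperiodic on $\Z^d$, and the constraint $k(n)/|k(n)| \to a$ places the target exactly along the direction in which the walk has zero drift — which is precisely the regime where a local limit theorem gives a nondegenerate Gaussian-type asymptotic rather than large-deviation decay. Second, apply the strong ratio theorem for $d$-dimensional random walks, Theorem~\ref{theo:neveuth}, which asserts exactly that for such walks the ratio of the point probabilities at $k(n)$ and at $k(n) + v$ (for a fixed $v \in \Z^d$, along the admissible direction) tends to $1$. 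Matching the shift $v = \mathbf{e}_r - |\bt| + \mathbf{e}_r$ against the hypotheses of Theorem~\ref{theo:neveuth}, together with checking that the combinatorial prefactors in the Dwass formula at $k(n)$ and at $k(n) - |\bt| + \mathbf{e}_r$ also have ratio tending to $1$ (they differ only by ratios of entries of $k(n)$, each of which $\to \infty$), yields the ratio limit $= 1$.

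Putting these together: $\P_r(\tau_n \in \mathbb{T}(\bt,x)) = \P_r(\tau \in \mathbb{T}(\bt,x) \mid |\tau| = k(n)) \to \P_r(\tau^* \in \mathbb{T}(\bt,x))$, and combined with $\P_r(\tau_n = \bt) \to 0$, Corollary~\ref{cor:cv} gives $\tau_n \to \tau^*$ in distribution under $\P_r$. For general root distribution $\alpha$, condition on the root type: $\P_\alpha(\tau_n \in \cdot) = \sum_r \P_\alpha(\mathcal{M}(\emptyset_\tau) = r \mid |\tau| = k(n))\,\P_r(\tau_n \in \cdot)$, and one checks that the conditional root-type probabilities converge to $\hat\alpha(r) = \alpha(r) a^*_r / \langle \alpha, a^*\rangle$ — again a consequence of the same ratio estimate, since $\P_\alpha(\mathcal{M}(\emptyset_\tau)=r, |\tau|=k(n)) = \alpha(r)\,\P_r(|\tau| = k(n))$ and $\P_r(|\tau|=k(n))$ is asymptotically proportional to $a^*_r$ times a quantity independent of $r$, by the local limit theorem. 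This gives $\tau_n \to \tau^*$ with root distribution $\hat\alpha$, matching the definition of the Kesten tree associated with $p$ and $\alpha$.

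I expect the main obstacle to be the rigorous bookkeeping that turns $\P_r(|\tau| = k)$ into a random-walk point probability to which Theorem~\ref{theo:neveuth} applies: verifying that the Chaumont--Liu encoding produces a walk satisfying the exact hypotheses of the strong ratio theorem (aperiodicity of the induced $d$-dimensional walk, the admissibility of the direction $a$, the negligibility of the combinatorial prefactor ratios), and handling the edge cases where some coordinates of $k(n)$ grow much slower than $|k(n)|$. The analytic core — the strong ratio theorem itself and the uniform $d$-dimensional Gnedenko local limit theorem underpinning it — is deferred to the Appendix, so within this section the work is entirely the reduction and the matching of hypotheses.
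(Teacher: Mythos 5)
Your overall outline matches the paper's: reduce to the strong ratio limit
\[
\lim_{n\to\infty} \frac{\P_r(|\tau|=k(n)-b)}{\P_r(|\tau|=k(n))}=1
\]
via Lemma~\ref{key1} with $i=r$ and Corollary~\ref{cor:cv}, then use the Chaumont--Liu multi-type Dwass formula and a random-walk strong ratio limit theorem. However, there is a genuine gap in how you propose to apply these ingredients. The Chaumont--Liu formula (Lemma~\ref{lem:CandL}) gives
\[
\P_{r}(|\tau|=k)=\frac{1}{\prod_{i\in [d]} k_i}\,\E\bigl[\det(\cs(k,r));\, S_k+\mathbf{e}_r=k\bigr],
\]
and the determinant $\det(\cs(k,r))$ is a \emph{random} polynomial of degree $d-1$ in the entries of the partial sums $S_{i,k_i}$: it cannot be pulled out of the expectation as a deterministic, asymptotically negligible ``combinatorial prefactor'' the way you describe. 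In dimension $d=1$ this determinant is identically $1$ and your picture is correct, but for $d\geq 2$ it is the central difficulty. Because of it, Theorem~\ref{theo:neveuth} does not apply directly; one needs the weighted extension Lemma~\ref{lem:srlt-HG}, which controls ratios of the form $\E[G;\, H+S_{k(n)-m}=s_n-b]/\E[G;\, H+S_{k(n)}=s_n]$ for a random weight $G$ with polynomial growth in $|H|$. The paper sets this up through Corollary~\ref{cor:calc-det}, which uses exchangeability to split $\det(\cs(k,r))$ into a finite sum (over trees $\bt\in\Tau_r$ and partitions $\ca\in\A_\bt$) of terms of the form $B_k(m_\ca)\,\E[G(\ca);\, \tilde S_{m_\ca}+S_{k-m_\ca}=k-\mathbf{e}_r]$ with $0\leq G(\ca)\leq |\tilde S_{m_\ca}|^{d-1}$, exactly the shape handled by Lemma~\ref{lem:srlt-HG}; only the factors $B_k(m_\ca)$ and $1/\prod k_i$ are deterministic prefactors with ratio $\to 1$.

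A secondary imprecision concerns the passage to a general root distribution $\alpha$. You assert that $\P_r(|\tau|=k(n))$ is asymptotically $a^*_r$ times a quantity independent of $r$, ``by the local limit theorem.'' The local limit theorem established in the Appendix (Theorem~\ref{Gth2}) gives uniform asymptotics for $\P_\theta(S_n=s)$, not a direct asymptotic for $\P_r(|\tau|=k(n))$ (again because of the determinant), so that claim is not immediate. The paper instead derives the needed convergence of $\Gamma_j(n)=\P_j(|\tau|=k(n))/\P_i(|\tau|=k(n)-|\bt|+\mathbf{e}_i)$ to $a^*_j/a^*_i$ indirectly: it first proves the theorem under each $\P_j$, then applies Lemma~\ref{key1} with $i\neq j$ to a convenient $(\bt',x')$ chosen via Lemma~\ref{lem:t'1} to read off the limit of $\Gamma_j(n)$ from the already-established convergence, and finally uses \reff{eq:keyeq} to adjust the shift. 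Your conclusion is correct but the mechanism you propose would need this extra bootstrap (or a separately proved precise local limit for $\P_r(|\tau|=k(n))$, which the paper does not establish).
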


\begin{rem}
   \label{rem:miermont}
   Let  $\tau$ be  a critical  GW tree  with offspring  distribution $p$
   satisfying  ($H_1$). We  can consider  $\tau$ conditionally  on the
   event that the population of type $i$, $|\tau^{(i)}|$, is large. According
   to Proposition 4 in \cite{M08}, the random variable $|\tau^{(i)}|$ is
   distributed as the  total number of vertices of  a critical mono-type
   GW  tree under  $\mathcal{M}_{\tau}(\emptyset)=i$,  or  as the  total
   number  of  vertices of  a  random  number of  independent  mono-type
   critical    GW   trees    with    the    same   distribution    under
   $\mathcal{M}_{\tau}(\emptyset)\neq i$. In  particular, we deduce from
   \cite{AD14a}  that,  if  $p^{(i)}$  is aperiodic,  the   key
   equality   $\lim_{n\rightarrow+\infty  }   {  \P(|\tau^{(i)}|=n-b)}/{
     \P_{r}(|\tau^{(i)}|=n)}=1$ holds for any  $b\in \Z$.  And following
   the proof  of Theorem \ref{dTheorem} after  Equation \reff{eq:keyeq},
   we easily get  that $\tau$ conditioned on  $|\tau^{(i)}|$ being large
   converges locally  to Kesten's  tree. See  \cite{S14} for  a detailed
   proof.
\end{rem}

\begin{rem}
   \label{rem:reduction}
   The  local  convergence  of  a multi-type  critical  GW  tree  $\tau$
   conditioned on the  number of vertices of one fixed  type being large
   to a Kesten's tree has been proved in \cite{S14}. It would be easy to
   extend  Theorem  \ref{dTheorem},  with the  same  minimal  conditions
   ($H_1$) and ($H_2$) to a conditioning on an asymptotic proportion per
   types for  $d'$ types,  with $d'<d$ by  using the  constructions from
   \cite{R11} or  from \cite{M08}. The  idea is  to map a  multi-type GW
   tree $\tau$ with  $d$ types onto another GW tree  $\tau'$ with $d'<d$
   types  and  offspring distribution  $p'$  so  that  the size  of  the
   population  of types  $1$  to  $d'$ of  $\tau$  and  $\tau'$ are  the
   same. Then  the key Equation  \reff{eq:keyeq} is now replaced  by the
   one  for  $\tau'$ which  holds  if  the offspring  distribution  $p'$
   satisfies  ($H_1$) and  ($H_2$).  Then  the proof  follows as  in the
   proof of Theorem \ref{dTheorem} after Equation \reff{eq:keyeq}.
\end{rem}

\begin{rem}
   \label{rem:penisson}
   The  change  of  offspring  distribution  given  in  Section  1.4  of
   \cite{P14}, when it exists, allows to extend Theorem \ref{dTheorem} to
   sub-critical multi-type GW trees.  In order to consider an asymptotic
   proportion of types different from the one given by the (positive normalized)
   left eigenvector  associated with the Perron-Frobenius  eigenvalue, one
   has  to  change   the  offspring  distribution,  see   Theorem  3  of
   \cite{P14}.  However,  this  requires  exponential  moments  for  the
   offspring distribution.
\end{rem}

We end this Section by using Theorem \ref{dTheorem} to extend results of
\cite{AD14b} on mono-type GW tree in the following sense.  Let $\tau$ be
a mono-type  GW tree (that  is $d=1$) with critical  aperiodic offspring
distribution  $  q=(  q(\ell),  \ell\in  \N)$.   Let  $f_q$  denote  the
generating function of $q$  and $\cq=\{\gamma>0; f_q(\gamma)<+\infty \}$
its domain on $(0, +\infty )$.

Let $d\geq  2$ and assume that  $\Card(\supp q)\geq d+1$.  Since  $q$ is
critical we have $0\in \supp q$.   Let $A_1, \ldots, A_d$ be a partition
of  $\supp  q$  such  that   $0\in  A_1$  and  $\Card(A_1)>1$.   We  set
$\alpha(i)=\sum_{\ell\in A_i}  q(\ell)$ for all $i\in  [d]$.  Notice that
   $\alpha$   is    a   positive    probability   distribution    and
$\alpha(1)>q(0)$.  We   set  $|\tau|=(|\tau^{(i)}|,  i\in   [d])$  where
$|\tau^{(i)}|$ be  the number  of individuals of  $\tau$ whose  number of
offsprings belongs to $A_i$.

For $x=(x_i, i\in [d])$ we set  $  \mathfrak{m}_{x}:=\sum_{i\in  [d]}
x_i \,
   \inf A_i$ and for $\gamma\in \cq$: 
\[
h_{x}(\gamma)=\sum_{i\in [d]} x_i\, 
\frac{f'_{A_i}(\gamma)}{f_{A_i}(\gamma)} 
\quad\text{with}\quad
f_{A_i}(\gamma)=\sum_{\ell\in   A_i}  \gamma^\ell   q(\ell) \quad
\text{for all $i\in                  [d]$.}
\]

\begin{cor}
   \label{cor:monoGW}
Let $q$ be a  critical aperiodic offspring
distribution. 
   Let $\tilde  \alpha=(\tilde \alpha(i),  i\in [d])\in \R^d$ be  such that
   $\tilde \alpha>0$ and  $\langle \tilde \alpha, \ind  \rangle=1$, so that
   $\tilde   \alpha$  is   a   non-degenerate   proportion.  Assume that:
\begin{equation}
   \label{eq:m<1}
\mathfrak{m}_{\tilde \alpha}<1
\end{equation}
and 
\begin{equation}
   \label{eq:root}
\text{there exists a (unique) $\gamma\in \cq$ such that}
\quad h_{\tilde \alpha}(\gamma)=1. 
\end{equation}
Let   $(k(n),   n\in\N^*)$   be   a  sequence   of   $\N^d$   satisfying
$\lim_{n\rightarrow\infty}{|k(n)|}=+\infty             $             and
$\lim_{n\rightarrow\infty}{k(n)}/{|k(n)|}  =\tilde \alpha$.   Let $\tau$
be  a random  mono-type GW  tree  with offspring  distribution $q$,  and
$\tau_n$ be  distributed as  $\tau$ conditionally  on $\{|\tau|=k(n)\}$.
Then the sequence  $(\tau_n, n\in \N ^ *)$ converges  in distribution to
the  Kesten's   tree  $\tilde   \tau^*$  associated  to   the  offspring
distribution  $\tilde   q=(\tilde  q(\ell),   \ell\in  \N)$   where  for
$i\in [d]$, $\ell\in A_i$:
\[
\tilde q(\ell)=\frac{\tilde \alpha(i)}{f_{A_i}(\gamma)} \, \gamma^\ell 
q(\ell).
\]
\end{cor}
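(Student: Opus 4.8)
The plan is to reduce Corollary~\ref{cor:monoGW} to Theorem~\ref{dTheorem} by realizing the mono-type GW tree $\tau$ with the given partition-induced type structure as a genuine $d$-type GW tree, and then applying a Girsanov-type change of offspring distribution to make it critical with the correct asymptotic proportion. First I would make the type structure explicit: given the mono-type tree $\tau$ with offspring law $q$, assign to each vertex $u$ the type $i\in[d]$ determined by $k_u[\tau]\in A_i$. Under this rule $\tau$ becomes (the forgetful image of) a $d$-type GW tree: a vertex of type $i$ has $\ell$ children with probability $q(\ell)/\alpha(i)$ if $\ell\in A_i$ (and $0$ otherwise, after normalizing so that the type of the vertex is consistent), but crucially its children's types are i.i.d.\ with law $\alpha$ on $[d]$ \emph{independently of $\ell$}, because the type of a child depends only on that child's own progeny count, not on its parent. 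This means the $d$-type offspring distribution $p=(p^{(i)},i\in[d])$ factorizes: $p^{(i)}(k)$ is supported on the simplex $\{k\in\N^d: |k|\in A_i\}$ and $p^{(i)}(k)=\frac{|k|!}{k_1!\cdots k_d!}\,\alpha^{k}\,\frac{q(|k|)}{\alpha(i)}$ when $|k|\in A_i$. Its mean matrix is $m_{ij}=\alpha(j)\,\frac{f'_{A_i}(1)}{\alpha(i)}$, i.e.\ $M$ has rank one up to the diagonal scaling, with left eigenvector proportional to $\alpha$ and eigenvalue $\rho=\sum_i f'_{A_i}(1)=f'_q(1)=1$; so the untilted $p$ is already critical, but its left eigenvector is $\alpha$, not the target $\tilde\alpha$.

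The second step is the tilt. I would introduce, for the $\gamma\in\cq$ supplied by hypothesis~\reff{eq:root}, the tilted mono-type law $\tilde q(\ell)=c_i\,\gamma^\ell q(\ell)$ for $\ell\in A_i$ with $c_i=\tilde\alpha(i)/f_{A_i}(\gamma)$ as in the statement; one checks $\tilde q$ is a probability measure since $\sum_i \tilde\alpha(i)=1$, and it is critical precisely because $h_{\tilde\alpha}(\gamma)=\sum_i \tilde\alpha(i) f'_{A_i}(\gamma)/f_{A_i}(\gamma)=1$ is the derivative-at-$1$ condition for the corresponding $d$-type mean matrix. Its type-factorized $d$-type version $\tilde p$ then has mean matrix with left eigenvector $\tilde\alpha$ and eigenvalue $1$. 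Next I would verify the hypotheses of Theorem~\ref{dTheorem} for $\tilde p$: primitivity of $\tilde M$ holds since all entries are positive (all $\tilde\alpha(i)>0$ and each $f'_{A_i}(\gamma)>0$ because $\gamma>0$ and each $A_i$ is nonempty; for $i$ with $A_i=\{0\}$ this fails, but $A_1\supsetneq\{0\}$ and for $i\ge 2$, $A_i\ne\emptyset$ and $\inf A_i\ge 1$ could still be fine — this needs a short argument, see below); non-singularity holds because $f\ne Ms$ as soon as some $A_i$ contains an $\ell\ge 2$, which follows from $\Card(\supp q)\ge d+1$ and the pigeonhole; aperiodicity ($H_2$) for $\tilde p$ means the subgroup of $\Z^d$ generated by $\bigcup_i(\supp(\tilde p^{(i)})-\supp(\tilde p^{(i)}))$ is all of $\Z^d$, which I would deduce from $\Card(\supp q)\ge d+1$ together with $0\in A_1$ — the $d$ vectors $\ell\mathbf e_i$ for various $\ell\in A_i$ plus the mixing among coordinates coming from the multinomial supports generate the lattice. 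Then condition~\reff{eq:m<1}, $\mathfrak{m}_{\tilde\alpha}=\sum_i\tilde\alpha(i)\inf A_i<1$, is what guarantees the tilted mean $\sum_i\tilde\alpha(i) f'_{A_i}(\gamma)/f_{A_i}(\gamma)$ can indeed be normalized to $1$ by some $\gamma\in\cq$ — i.e.\ it underpins the existence half of~\reff{eq:root} — but since~\reff{eq:root} is taken as a hypothesis I would mainly use \reff{eq:m<1} to ensure the relevant tilt does not force $\gamma$ out of $\cq$ and to rule out the degenerate case $\tilde\tau^*$ trivial.

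The third step is the Girsanov identity relating the conditioned laws. For a finite tree $\bt\in\T_0$ with $|\bt^{(i)}|$ vertices of type $i$ and total edge-count encoded by the progeny vector, the Radon--Nikodym factor between $\P^{\tilde q}$ and $\P^{q}$ on $\{\tau=\bt\}$ is $\prod_{u\in\bt}c_{\iota(u)}\gamma^{k_u[\bt]}$ where $\iota(u)$ is the type of $u$; since $\sum_{u\in\bt}k_u[\bt]=\Card(\bt)-1$ and $\#\{u:\iota(u)=i\}=|\bt^{(i)}|$, this factor equals $\gamma^{\Card(\bt)-1}\prod_i c_i^{|\bt^{(i)}|}$, which depends on $\bt$ \emph{only through} $|\bt|=(|\bt^{(i)}|)_i$ (note $\Card(\bt)=|k(n)|$ on the conditioning event $\{|\tau|=k(n)\}$). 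Therefore conditioning $\tau$ under $\P^{q}$ on $\{|\tau|=k(n)\}$ gives \emph{exactly the same law} as conditioning $\tau$ under $\P^{\tilde q}$ on $\{|\tau|=k(n)\}$ — the tilt cancels. Hence $\tau_n$ is distributed as the $d$-type-structured tilted tree conditioned on its progeny-by-type being $k(n)$, with $k(n)/|k(n)|\to\tilde\alpha=$ the left Perron eigenvector of $\tilde M$; Theorem~\ref{dTheorem} applies and yields local convergence to the multi-type Kesten tree of $\tilde p$. Finally I would check that this multi-type Kesten tree, projected back to the mono-type picture by forgetting the artificial types, is exactly the mono-type Kesten tree $\tilde\tau^*$ of $\tilde q$: the size-biasing~\reff{eq:hat-p} with the product structure of $\tilde p$ reduces to ordinary size-biasing of $\tilde q$ along the spine, and the spine-child selection "proportional to $a^*_j$" becomes, after forgetting types, selection uniform among the children weighted correctly — matching the standard mono-type Kesten construction. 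The main obstacle, I expect, is the bookkeeping in verifying ($H_1$)--($H_2$) for $\tilde p$ cleanly from the raw hypotheses $\Card(\supp q)\ge d+1$, $0\in A_1$, $\Card(A_1)>1$, together with handling the possibility that some $A_i$ (for $i\ge 2$) is a singleton $\{\ell_i\}$ with $\ell_i\ge 1$: primitivity of $\tilde M$ still holds because $f'_{A_i}(\gamma)=\ell_i\gamma^{\ell_i}q(\ell_i)>0$ even for singletons, so every entry $\tilde m_{ij}=\tilde\alpha(j)f'_{A_i}(\gamma)/f_{A_i}(\gamma)>0$, and aperiodicity requires combining the multinomial mixing inside $A_1$ (which has at least two elements, one of them $0$) with the $d-1$ "axis" directions from the other blocks — a routine but slightly fiddly lattice-generation argument.
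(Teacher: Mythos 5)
Your proposal reproduces the paper's proof in all essentials: assigning artificial types by the size of a vertex's progeny to turn $\tau$ into a $d$-type GW tree with product-form offspring law \reff{eq:p_q}, introducing the tilted law $\tilde q$ and observing that the Radon--Nikodym factor $\gamma^{\Card(\bt)-1}\prod_i c_i^{|\bt^{(i)}|}$ depends on $\bt$ only through $|\bt|$ so that the conditional laws given $\{|\tau|=k(n)\}$ agree, then invoking Theorem~\ref{dTheorem} and identifying the resulting multi-type Kesten tree with the mono-type one. The only difference is organizational (the paper first treats $\tilde\alpha=\alpha$ and then reduces the general case to it, while you go directly), and you, like the paper, assert rather than prove that the induced multi-type distribution inherits aperiodicity ($H_2$) from that of $q$.
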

Notice  that if  $\tilde  \alpha=\alpha$,  then condition  \reff{eq:m<1}
holds  as $\Card(  A_1)>1$ and $q$ is critical;  and condition  \reff{eq:root}  also holds  with
$\gamma=1$ as $q$ is critical. We deduce that if $\tilde \alpha=\alpha$,
then $\tilde q=q$ and $\tilde \tau^*=\tau^*$ is simply the Kesten's tree
associated to  the offspring  distribution $q$.  We  now comment  on the
conditions \reff{eq:m<1} and \reff{eq:root}.

\begin{rem}
   \label{rem:m<1}
  One  can see
   that condition \reff{eq:m<1} is almost optimal. This is easy to check
   in  the binary  case.   Assume $q(0)+q(1)+q(2)=1$,  $q(0)q(1)q(2)>0$,
   $A_1=\{0,1\}$    and    $A_2=\{2\}$.     Since   we    always    have
   $|\tau^{(1)}|>|\tau^{(2)}| $,  then any asymptotic proportion  has to
   satisfies   $\tilde  \alpha(1)   \geq  \tilde   \alpha(2)$  that   is
   $\mathfrak{m}_{\tilde \alpha} = 2\tilde \alpha(2)\leq 1$.
\end{rem}

\begin{rem}
   \label{rem:root}
   For $i\in  [d]$, let $\rP_{i,  \gamma}$ denote the distribution  of a
   random   variable   $Z$   taking    values   in   $A_i$   such   that
   $\rP_{i,  \gamma}(Z=\ell)=\ind_{\{\ell\in  A_i\}}\gamma^\ell  q(\ell)
   /f_{A_i}(\gamma)$.
   In                 particular,                we                 have
   $h_{\tilde    \alpha}(\gamma)=\sum_{i\in   [d]}    \tilde   \alpha(i)
   \rE_{i,\gamma}[Z]$.
   An         elementary          computation         gives         that
   $\partial_\gamma  \rE_{i,\gamma}[Z]=\gamma^{-1}  \Var_{i,\gamma}(Z)$.
   Using  that $\Card(A_1)>1$,  we get  $\Var_{1,\gamma}(Z)>0$ and  thus
   $h'_{\tilde \alpha}$  is positive  on $\cq$.  We  deduce that,  if it
   exists,  the root  of  the equation  \reff{eq:root}  is then  unique.
   Since
   $\lim_{\gamma\rightarrow                 0}                 h_{\tilde
     \alpha}(\gamma)=\mathfrak{m}_{\tilde                      \alpha}$,
   we      deduce     that      condition     \reff{eq:m<1}      implies
   $h_{\tilde \alpha}(0+)<1$.  A necessary  and sufficient condition for
   the    existence   of    a   root    to   \reff{eq:root}    is   that
   $\lim_{\gamma\uparrow  R}  h_{\tilde  \alpha} (\gamma)\geq  1$,  with
   $R=\sup  \cq$ the  radius  of  convergence of  the  series $f_q$.   A
   sufficient  condition to  get this  latter condition  is for  example
   $\lim_{\gamma\uparrow R} f'_q(\gamma)=+\infty $  or even the stronger
   condition $R=+\infty $.

   As noticed earlier,  for $\tilde \alpha=\alpha$, as  $q$ is critical,
   we get that  $h_\alpha(1)=1$.  So in this case  no further hypothesis
   are needed.  We also deduce that, if $\tilde \alpha(i)\geq \alpha(i)$
   for  all  $i\in [d]$  such  that  $f_{A_i}(1)\geq  1$, then  we  have
   $h_{\tilde   \alpha}(1)\geq  h_\alpha(1)=1$.    And  thus,   in  this
   particular case also,  the root  of \reff{eq:root} exists  without further
   assumptions on $q$.
\end{rem}

\begin{proof}[Proof of Corollary \ref{cor:monoGW}]
  We consider artificially  that $\tau$ is a  $d$ dimensional multi-type
  GW tree, by  saying that an individual  $u\in \tau$ is of  type $i$ if
  the number of offsprings of $u$  belongs to $A_i$.  The corresponding root
  type  distribution   is  $\alpha$  and  the   corresponding  offspring
  distribution  $p=(p^{(i)},  i\in  [d])$  is defined  as  follows:  for
  $k=(k_1, \ldots, k_d)\in \N^d$,
\begin{equation}
   \label{eq:p_q}
p^{(i)}(k)=\ind_{\{|k|\in A_i\}}
\frac{q(|k|)}{\alpha(i)}\,\,\binom{|k|}{k} \, \alpha^k, 
\end{equation}
where  we recall  that $\alpha^k=\prod_{j\in  [d]} \alpha(j)^{k_j}$  and
$|k|=\sum_{i\in [d]}  k_i$, and  we use the  following notation  for the
multinomial coefficient $\binom{|k|}{k}=|k|!/\prod_{i\in[d]} k_i!$.  For
simplicity we shall still denote the corresponding multi-type GW tree by
$\tau$.    We    define   $\alpha^*=(\alpha^*(i),   i\in    [d])$   with
$\alpha^*(i)=\sum_{\ell\in   A_i}   \ell  q(\ell)/\alpha(i)$   so   that
$\langle  \alpha,  \ind  \rangle=  \langle  \alpha,  \alpha^*\rangle=1$.
Notice that $\alpha^*$  is positive as $\alpha(1)>q(0)$  and $0\in A_1$.
It   is   easy  to   check   that   the   mean   matrix  is   given   by
$M=(\alpha^*)^T \alpha$. Its only non  zero eigenvalue is 1 and $\alpha$
and  $\alpha^*$   are  the   non-negative  associated  left   and  right
eigenvectors.  The mean  matrix $M$ is primitive as all  its entries are
positive.  We get that condition ($H_1$) holds.  Notice ($H_2$) holds as
we assumed $q$ is aperiodic.

We first  consider the  case $\tilde  \alpha=\alpha$.  (As  noticed just
after  Corollary  \ref{cor:monoGW},  condition \reff{eq:m<1}  holds  and
condition \reff{eq:root} also holds  with $\gamma=1$.)  We easily deduce
from Theorem \ref{dTheorem} that if  $(k(n), n\in\N^*)$ is a sequence of
$\N^d$  satisfying   $\lim_{n\rightarrow\infty}{k(n)}/{|k(n)|}  =\alpha$
with $\lim_{n\rightarrow+\infty } |k(n)|=+\infty $, then $\tau_n$, which
is distributed  as $\tau$ conditionally on  $\{|\tau|=k(n)\}$, converges
in distribution to  the ($d$-type) Kesten's tree  $\tau^*$ associated to
the offspring distribution  $p$ and type root  distribution $\alpha$. \\

Let $\hat \tau^*$  be the mono-type Kesten's tree associated  to $q$. We
shall check  that $\tau^*$  is distributed  as $\hat  \tau^*$ seen  as a
multi-type GW  tree, where an individual  $ u\in \hat\tau^*$ is  of type
$i$ if the number of offsprings of $ u$ belongs to $A_i$ and that $u$ is
normal   if  it   has  a   finite   number  of   descendants  (that   is
$\Card  (\{v\in  \hat \tau^*;  \,  u\prec  v\})<+\infty $)  and  special
otherwise.  Let $\bt\in\mathbb{T}_0$ and $x\in\mathcal {L}_0(\bt)$.  For
$i\in     [d]$,      we     set     $x_i=\{\hat     x,      i\}$     and
$\bt_i=\{x_i\}\bigcup (\bt\backslash \{x\})$, the tree which is equal to
$\bt$ except  for the  leaf $\hat  x$ which  is of  type $i$  instead of
$\cm(x)$.   We denote  by  $\T(\bt,\hat x)$  the set  of  trees obtained  by
grafting trees on the leaf $x$  of $\bt$ with possibly changing the type
of $x$,  that is  $\T(\bt,\hat x)=\bigcup _{i\in  [d]} \T(\bt_i,  x_i)$.  We
write
$\P_   {\hat  \alpha}   (d\tau^*)=\sum_{r   \in   [d]}  \hat   \alpha(r)
\P_r(d\tau^*)$. We have for $i\in [d]$:
\[
\P_{\hat \alpha} (\tau^*\in \T(\bt_i,x_i))
= \alpha^*(i) \sum_{r\in [d]} \frac{\hat \alpha(r) }{\alpha^*(r)} \,\P_{r}
(\tau\in \T(\bt_i,x_i)) 
= \alpha^*(i)\,  \P(\tau\in \T(\bt_i,x_i)),
\]
where we used \reff{eq:K-T} for the first equality and \reff{eq:hat-a}
as well as $\langle \alpha, \alpha^* \rangle=1$
for the second.  
Using that 
$\P (\tau\in \T(\bt_i,x_i))=\alpha(i) \P (\tau\in
\T(\bt,\hat x))$ and $\langle \alpha, \alpha^* \rangle=1$, we deduce that:
\[
\P_{\hat \alpha} (\tau^*\in \T(\bt,\hat x))
=\sum_{i\in [d]} \alpha^*(i)\,  \P (\tau\in \T(\bt_i,x_i))
= \P (\tau\in \T(\bt,\hat x)). 
\]
Using  \reff{eq:K-T} in the mono-type case, we get $\P(\hat \tau^*\in
\T(\bt, \hat x))= \P (\tau\in 
\T(\bt,\hat x))$ and thus $\P_{\hat \alpha} (\tau^*\in \T(\bt,\hat x))= 
\P(\hat \tau^*\in
\T(\bt, \hat x))$. It is left to the reader to check that
$\cf'=\{\T(\bt,\hat x);\, \bt\in \T_0, x\in\mathcal 
{L}_0(\bt) \}$ is a separating   class on $\T'_1$. Hence $\tau^*$ is
distributed as $\hat \tau^*$, and can thus
be seen as the (mono-type) Kesten  tree associated with the offspring distribution
$q$.\\

We   now   shall   condition   on  a   general   asymptotic   proportion
$\tilde  \alpha   \in  \R^d$  satisfying  condition   \reff{eq:m<1}  and
condition \reff{eq:root}.   We assume  that there exists  a root  to the
equation \reff{eq:root}, say $\gamma$.  This root is unique according to
Remark \ref{rem:root}.  The probability  $\tilde q$ defined in Corollary
\ref{cor:monoGW} is a critical (as $\gamma$ is a root of \reff{eq:root})
and     aperiodic     (as     $q$     is     aperiodic)     and     that
$\tilde  \alpha(i)=\sum_{\ell\in  A_i}  \tilde q(\ell)$  for  all  $i\in
[d]$.
Let $\tilde  \tau$ be  a mono-type GW  tree with  offspring distribution
$\tilde q$  (which can also  be seen as a  multi-type GW tree  where the
type of an  individual is $A_i$ if  the number of its  offspring lies in
$A_i$). We deduce that for all $\bt\in \T_0$, we have:
\[
\P_{\tilde \alpha}(\tilde \tau=\bt)=\prod_{u\in \bt} \tilde q(k_u[\bt])
= \gamma^{\langle |\bt|, \ind \rangle-1} \Gamma^{|\bt|} \,  \P_\alpha(\tau=\bt),
\]
where    $\Gamma=(\tilde\alpha(i)/f_{A_i}(\gamma),   i\in    [d])$.   In
particular,  for all  $k\in \N^d$,  the random  tree $\tau_n$,  which is
distributed  as $\tau$  conditionally  on $\{|\tau|=k\}$,  has the  same
distribution as the random tree $\tilde \tau_n$, which is distributed as
$\tilde \tau$ conditionally on  $\{|\tilde \tau|=k\}$.  According to the
first part,  since $\tilde q$ is  critical aperiodic, we deduce  that if
$(k(n),    n\in\N^*)$   is    a    sequence    of   $\N^d$    satisfying
$\lim_{n\rightarrow\infty}{k(n)}/{|k(n)|}    =\tilde     \alpha$    with
$\lim_{n\rightarrow+\infty } |k(n)|=+\infty $, then $\tilde \tau_n$, and
thus $\tau_n$, converges in distribution  to the mono-type Kesten's tree
$\tilde \tau^*$ associated to $\tilde q$.
\end{proof}


\subsection{Around the Dwass formula}

   Let  $\tau$ be  a
random GW  tree with  critical offspring distribution  $p$. We have no
assumption on $p$ for the moment. For
$i,j\in[d]$, we define  the total number of individuals of type $i$
whose parent is of type $j$:
\[
B_{ij}=\Card \left(\{u\in \tau, \, \cm(u)=i  \text{ and }  \cm(\fa(u))=j\}\right).
\]
And we set $\cb=(B_{ij}; i,j\in [d])$. Notice that $\sum_{j\in  [d]}
B_{ij}=|\tau^{(i)}|$. 

Let  $(X_{i,  \ell};  \ell\in\mathbb{N}^*)$   for  $i\in[d]$  be  $d$
independent families  of independent random variables  in $\mathbb{N}^d$
with $X_{i, \ell}$ having probability distribution $p^{(i)}$.  For $i\in
[d]$, we consider the random walk $S_{i, n}=\sum\limits_{\ell=1}^n X_{i,
  \ell}$ for $n\in \N^*$  with $S_{i, 0}=0$.  For $k=(k_1,\ldots,k_d)\in
\N^d$,  we  set  $S_{k}=\sum_{i\in  [d]}  S_{i,  k_i}$.   We  adopt  the
following convention for a $d$-dimensional  random variable $X$ to write
$X=(X^{(j)},  j\in  [d])$,   so  that  we  have   in  particular  $S_{i,
  n}^{(j)}=\sum\limits_{\ell=1}^n X_{i,  \ell}^{(j)}$.  For  $k\in \N^d$
and $r\in [d]$, we define the matrix $\cs(k,r)=(\cs_{ij}(k,r); \, i,j\in
[d])$ of size $d\times d$ by:
\begin{equation}
   \label{eq:def-S}
\cs_{ij}(k,r)=-S_{i,k_i}^{(j)}+ (S_k^{(j)}+\ind_{\{r=i\}} )\ind_{\{i=j\}}.
\end{equation}

The following lemma is a direct consequence of the representation of
Chaumont and  Liu \cite{CL13}  for multi-type GW  process, which  
generalizes the  Dwass formula to the multi-type case.

\begin{lem}\label{lem:CandL}
 Let  $\tau$ be  a
random GW  tree with  critical offspring distribution  $p$.   For $r\in
[d]$ and $k\in (\N^*)^d$, we have:
\[
\P_{r}(|\tau|=k)
=\inv{\prod_{i\in [d]} k_i} \E\left[\det(\cs(k,r)); \, S_k+{\bf
      e}_r=k\right].
\]
\end{lem}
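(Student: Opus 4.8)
The plan is to invoke the multi-type Dwass-type formula of Chaumont and Liu \cite{CL13}, which encodes a critical (or sub-critical) multi-type GW forest by a $d$-dimensional random walk, and to check that the present statement is exactly the one-tree specialization of that formula. First I would recall the set-up from \cite{CL13}: a forest consisting of $k_i$ independent GW trees rooted at type $i$, for each $i \in [d]$, has total progeny by types equal to a prescribed vector $n = (n_j, j\in[d])$ with probability expressible via the walks $S_{i,\cdot}$ and a determinant of the associated ``cycle-lemma'' matrix. The key point is that conditioning this forest to have exactly $n$ individuals of each type corresponds, on the walk side, to the event that the walk $S_n$ (summing $n_i$ steps of the $i$-th family) satisfies a first-passage/bridge-type condition; the determinant $\det(\cs(k,r))$ with the entries \reff{eq:def-S} is precisely the combinatorial factor counting the number of good cyclic shifts, while the prefactor $1/\prod_i k_i$ comes from the Dwass-type normalization (one distinguished root per type, up to cyclic rotation).

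The main steps would be: (i) state the Chaumont--Liu identity for a multi-type forest with $\ell = (\ell_i, i\in[d])$ roots of each type, total progeny $n$; (ii) observe that a single GW tree rooted at type $r$ is the forest with $\ell = \mathbf{e}_r$, and that ``$|\tau| = k$'' together with the branching structure forces the underlying walk to use exactly $k_i$ steps from the $i$-th family (since the number of type-$i$ individuals equals $\ell_i$ plus the number of type-$i$ children produced, and summing the offspring vectors gives $S_k$); (iii) translate the event ``the forest spanned by $\mathbf{e}_r$ roots and these offspring vectors is a valid tree with progeny $k$'' into the walk event $S_k + \mathbf{e}_r = k$; (iv) identify the counting factor with $\det(\cs(k,r))$ by matching \reff{eq:def-S} to the matrix appearing in the cycle lemma of \cite{CL13}; and (v) collect the normalization $1/\prod_{i\in[d]} k_i$. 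Steps (ii)--(iii) are essentially bookkeeping once the correspondence between individuals-by-type and walk-steps is set up.

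The step I expect to be the main obstacle is (iv), namely verifying that the matrix $\cs(k,r)$ defined in \reff{eq:def-S}, with its somewhat asymmetric entries $\cs_{ij}(k,r) = -S_{i,k_i}^{(j)} + (S_k^{(j)} + \ind_{\{r=i\}})\ind_{\{i=j\}}$, is exactly the determinant that appears in the Chaumont--Liu representation (possibly after a change of variables, a transpose, or absorbing the root-type shift $\ind_{\{r=i\}}$ into a choice of basepoint on the $r$-th walk). This requires carefully matching conventions — in particular whether \cite{CL13} works with the walk $S_k$ or the shifted walk, and how the ``$+\mathbf{e}_r$'' in the constraint $S_k + \mathbf{e}_r = k$ interacts with the distinguished root. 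Since the statement is labelled ``a direct consequence'' of \cite{CL13}, the write-up would keep this verification short: recall the precise form of their Theorem, substitute $\ell = \mathbf{e}_r$, and point out that their matrix coincides with $\cs(k,r)$ after identifying the variables.
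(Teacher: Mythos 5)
Your plan is in the same spirit as the paper's proof---both invoke the multi-type Chaumont--Liu encoding of \cite{CL13} and identify the determinant $\det(\cs(k,r))$. There are two points where your proposal diverges from the paper, the second of which is a genuine gap.

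First, the precise input from \cite{CL13} that the paper uses is Theorem~1.2 there, which gives the law of the finer statistic $\cb=(B_{ij})$ counting, for each pair of types, the type-$i$ children of type-$j$ parents; it is not a direct formula for the progeny vector $|\tau|$. The paper then observes that under the compatibility constraint $k=\mathbf{e}_r+\sum_j\kappa_j$ one has $\{\cb=\kappa\}\subset\{|\tau|=k\}$, sums \reff{ChaumontandLiu} over all such $\kappa$, and finally identifies $\Delta(k)-\kappa$ with the transpose of $\cs(k,r)$ on the event $\bigcap_{j\in[d]}\{S_{j,k_j}=\kappa_j\}$. Your step~(iii) in effect rolls the cyclic-lemma ``valid tree'' condition into the balance event $S_k+\mathbf{e}_r=k$; in the actual argument that validity is exactly what the factor $\det(\cs(k,r))/\prod_i k_i$ accounts for, while the event only encodes population balance. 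Your steps (iv)--(v) re-introduce that factor, so the ingredients are all present, just arranged differently.

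Second, and more seriously: Theorem~1.2 of \cite{CL13} was proved under additional hypotheses on $p$ that are not implied by the present lemma's sole assumption of criticality. The paper closes this by noting that, for fixed $\kappa$, \reff{ChaumontandLiu} is a finite algebraic identity in the values $p^{(i)}(\cdot)$ and that it holds on a rich enough family of offspring laws (those with $p^{(i)}(k)>0$ iff $|k|\le c$ for some finite $c\ge 2$), hence for all $p$. Without some version of this extension argument you cannot simply ``invoke'' the identity for the $p$ at hand, so this step needs to be added to your plan.
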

\begin{proof}
For $\kappa=(\kappa_{ij}; i,j\in [d]) \in \N^{d\times d}$, we denote,  for $j
\in [d]$, by
$\kappa_j$  the column vector $(\kappa_{ij}, i \in [d])$. 
We deduce from Theorem 1.2 in 
\cite{CL13} that, for $r\in [d]$, $k=(k_1, \ldots, k_d) \in (\N^*)^d$,
$\kappa=(\kappa_{ij}; i,j\in [d]) \in \N^{d\times d}$ such that 
\begin{equation}
   \label{eq:cond-kappa}
k={\bf e}_r+\sum_{j\in [d]}\kappa_{ j}, 
 \end{equation} 
we have:
\begin{equation}
\label{ChaumontandLiu}
\P_r(\cb=\kappa)=\det(\Delta(k)- \kappa)\, 
\prod_{j\in [d]}\frac{\P(S_{j, k_j}=\kappa_j)}{k_j},
\end{equation}
where $\Delta(k)$ is the $d\times  d$ diagonal matrix with diagonal $k$.
Notice that additional hypotheses on the offspring distribution $p$ were
required in Theorem 1.2 from  \cite{CL13}.  However, for fixed $\kappa$,
\reff{ChaumontandLiu} is a finite algebraic expression of $p$. According
to  \cite{CL13}, it  holds in  particular for  all $p$  such that  there
exists a finite constant $c\geq 2$ and $p^{(i)}(k)>0$ if $|k|\leq c$ and
$p^{(i)}(k)=0$  if  $|k|>c$   for  all  $i\in  [d]$.   This  gives  that
\reff{ChaumontandLiu} holds for all $p$.

Because of \reff{eq:cond-kappa}, we have:
\begin{equation}
   \label{eq:|t|=k}
\P_r(|\tau|=k, \, \cb=\kappa)=\P_r(\cb=\kappa).
\end{equation} 
Thanks to the definition of $\cs(k,r)$, we have that $\Delta(k)
-\kappa$ is equal to the transpose of $\cs(k,r)$ on $\bigcap _{j\in [d]}
\left\{S_{j,k_j}=\kappa_j\right\}$. 
By summing  \reff{eq:|t|=k} and thus \reff{ChaumontandLiu}  over all the
possible values  of $\kappa$  such that \reff{eq:cond-kappa}  holds, we
get:
\begin{align*}
\P_r(|\tau|=k)
&=\sum_{\kappa } \P_r(\cb=\kappa) \ind_{\{k={\bf e}_r+\sum_{j\in [d]}\kappa_{ j}\}}\\
&=\inv{\prod_{j\in [d]} k_j} \sum_{\kappa } \det(\Delta(k)- \kappa)\, 
\ind_{\{k={\bf e}_r+\sum_{j\in [d]}\kappa_{ j}\}}
\P(\forall j\in [d],\, S_{j, k_j}=\kappa_j )\\
&=\inv{\prod_{i\in [d]} k_i}
 \E\left[\det(\cs(k,r)); \, {\bf e}_r+S_k=k  \right].
\end{align*}
\end{proof}

In order to compute the determinant $\det(\cs(k,r))$, instead of using a
development based on permutations, we shall use a development based on
elementary forests, see Lemma 4.5 in \cite{CL13} and Formula
\reff{eq:CL13} below. (As we are interested in computing the determinant
of a matrix whose all columns  but one sum up to 0, we shall only consider
forests reduced to one tree.)

Recall $\ind=(1,\ldots,1)\in\R^d$. For $r\in  [d]$, we consider $\Tau_r$
the subset of $\T_0$  of trees with root of type  $r$, and having exactly $d$
individuals, all of them with a distinct type:
\[
\Tau_r=\{\bt\in \T_0;\,  |\bt|=\ind, \text{ and }
\cm(\emptyset_\bt)=r\}. 
\]
For $\bt\in \Tau_r$ and $j\in [d]\setminus\{r\}$, let $j_\bt$ denote the
type of the parent of the individual of type $j$: $j_\bt=\cm(\fa(u_j))$,
where $u_j$ is the only element of $\bt$ such that $\cm(u_j)=j$. We
shall use the
following formula to give asymptotics on $\det(\cs(k,r))$. 

\begin{lem}
   \label{lem:detS}
For $r\in [d]$ and $k\in (\N^*)^d$, we have:
\[
\det(\cs(k,r))= \sum_{\bt\in  \Tau_r}\, \,  \prod_{j\in [d]\setminus
  \{r\} }
S^{(j)}_{j_\bt,k_{j_\bt}}.
\]
\end{lem}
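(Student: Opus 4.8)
The identity to prove expands the determinant of the $d\times d$ matrix $\cs(k,r)$ whose $(i,j)$ entry is $-S_{i,k_i}^{(j)}+(S_k^{(j)}+\ind_{\{r=i\}})\ind_{\{i=j\}}$. The plan is to exploit the structure that, for fixed $j$, the $j$-th column of $\cs(k,r)^T$ is (up to the diagonal correction) $S_k^{(j)}\mathbf{e}_j - (S_{i,k_i}^{(j)},\, i\in[d])$; since $S_k^{(j)}=\sum_{i\in[d]} S_{i,k_i}^{(j)}$, each column of the matrix $-\cs(k,r)+(\text{diagonal }\ind_{\{r=i\}})$ sums to $0$ in every coordinate except the diagonal shift. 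So I would first reduce to a combinatorial expansion of a determinant of a matrix whose columns all but one sum to zero, exactly the situation Lemma~4.5 of \cite{CL13} is designed for; I would state the specialization of that expansion — formula \reff{eq:CL13} referenced just above the statement — as the starting point, restricting the sum over elementary forests to those reduced to a single tree rooted at $r$ (the other forests contribute $0$ because the corresponding columns sum to zero).

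Having that expansion, the main work is bookkeeping: match the terms indexed by forests/trees in \cite{CL13} with the set $\Tau_r$ of trees on $d$ labelled vertices, one per type, rooted at $r$. For a tree $\bt\in\Tau_r$, each non-root type $j$ has a well-defined parent-type $j_\bt$, and the corresponding factor coming out of the Leibniz/forest expansion should be precisely the off-diagonal entry $-\cs_{j_\bt,j}$ restricted appropriately, i.e. $S^{(j)}_{j_\bt,k_{j_\bt}}$ (signs cancel against the permutation sign encoded in the tree structure, which is the standard matrix-tree-type cancellation). I would carry this out by: (1) writing $\cs(k,r)^T=\Delta'(k,r)-A$ where $A_{ij}=S^{(j)}_{i,k_i}$ has all-equal-to-$S^{(j)}_{\cdot}$ column sums and $\Delta'(k,r)$ is the diagonal matrix with entries $S_k^{(j)}+\ind_{\{r=j\}}$; (2) invoking the forest expansion of $\det(\Delta'-A)$ from \cite{CL13}; (3) observing the diagonal entry contributes the "root" term and each off-diagonal $A_{j_\bt,j}$ supplies the claimed factor; (4) checking that the unique distinguished column (the one whose sum is the nonzero diagonal value plus correction) forces the forest to be a single tree rooted at $r$, giving the sum over $\Tau_r$.

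The step I expect to be the real obstacle is precisely this translation between the language of \cite{CL13} (elementary forests attached to a general determinant formula) and the clean product $\prod_{j\neq r} S^{(j)}_{j_\bt,k_{j_\bt}}$: one must verify that the sign $(-1)$ attached to each off-diagonal entry $-A_{ij}$ in $\cs^T$ combines with the sign of the permutation/forest to yield $+1$ for every $\bt\in\Tau_r$, and that no spurious contributions survive from forests with more than one component. This is the kind of cancellation that is conceptually routine (it is the multivariate matrix-tree phenomenon) but requires care to state correctly; I would handle it by appealing directly to the cited Lemma~4.5 rather than re-deriving the expansion, and then checking the single-tree reduction and the sign on one generic term. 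Everything else — that $\det(\cs(k,r))=\det(\cs(k,r)^T)$, and that $S_k^{(j)}=\sum_i S^{(j)}_{i,k_i}$ so that the non-distinguished columns of $A$ minus their diagonal have zero coordinate sums — is immediate from the definitions in \reff{eq:def-S} and the construction of $S_k$.
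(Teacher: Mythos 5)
Your approach matches the paper's: invoke Lemma~4.5 of \cite{CL13} (formula \reff{eq:CL13}), apply it to $\cs(k,r)$, and observe that every forest with more than one component, or whose single root has type $\neq r$, contributes zero, so the sum collapses to trees in $\Tau_r$ with the product over $j\neq r$ of the entries $S^{(j)}_{j_\bt,k_{j_\bt}}$. One small slip: the matrix $\Delta'(k,r)-A$ with $A_{ij}=S^{(j)}_{i,k_i}$ is $\cs(k,r)$ itself rather than its transpose, but this is harmless since the determinant is transpose-invariant.
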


\begin{proof}
  We  follow the  presentation of  \cite{CL13}. We  say that a  collection of
  trees is   a forest. A  forest $\bff=(\bt_j, j\in J)$  is called
  elementary  if the  trees  are  pairwise disjoint  and  if the  forest
  contains exactly one individual of each type, that is $\sum_{j\in J}
  |\bt_j|=\ind$. 
Let $\F$  denote the set  of elementary  forests. For $\bff\in  \F$, set
$u_i$ the individual in  $\bff$ of type $i$, which belongs  to a tree of
$\bff$ say $\bt_j$, and write $i_\bff=\cm(v)$ for the type of the parent
$v=\fa_{u_i}(\bt_j)$ of $u_i$ if $|u_i|>0$ and $i_\bff=0$ if $|u_i|=0$.

According to Lemma 4.5 in \cite{CL13}, we have  for
$\kappa=(\kappa_{ij}; i,j\in [d]) \in \R^{d\times d}$
\begin{equation}
   \label{eq:CL13}
\det(\kappa)=(-1)^d\sum_{\bff\in  \F} \prod_{j\in [d]}
\kappa_{j_\bff,j},
\end{equation}
with the convention that $\kappa_{0,j}=- \sum_{i\in [d]} \kappa_{ij}$.

Thanks  to  Definition \reff{eq:def-S}  of $\cs(k,r)$,  this implies
that for $r\in [d]$ and $k\in (\N^*)^d$, we have:
\begin{equation}
   \label{eq:detS}
\det(\cs(k,r))= \sum_{\bff\in  \F} \prod_{j\in [d]}
S^{(j)}_{j_\bff,k_{j_\bff}},
\end{equation}
with      the     convention      that      if     $j_\bff=0$,      then
$S^{(j)}_{j_\bff,k_{j_\bff}}= \ind_{\{j=r\}}$. Notice that
$\prod_{j\in [d]}
S^{(j)}_{j_\bff,k_{j_\bff}}=0$ if the forest $\bff$ is not reduced to
a single tree whose root is of type $r$. 
To  conclude,  use that $j_\bff=j_\bt$ if the forest $\bff$ is
reduced to a single tree $\bt$.
\end{proof}

Let $(\tilde  X_{i, \ell};  \, \ell\in\mathbb{N}^*,  i\in [d])$  be a
sequence  of  random  variables  independent   of  $(  X_{i,  \ell};  \,
\ell\in\mathbb{N}^*, i\in [d])$ with the same distribution.

For  a  finite  subset  $K$   of  $\N$,  we  shall  consider  partitions
$\textbf{A}^{(\ell,K)} =(A_1^{K},  \ldots , A_\ell^K)$ of  $K$ such that
$\inf A_1^K< \cdots  <\inf A_\ell^K$.  For $\bt\in  \Tau_r$, $i\in [d]$,
recall that  $u_i$ is  the individual  in $\bt$ of  type $i$.  Denote by
$C_i(\bt)=\{j\in [d]; \, j_\bt=i\}$ the set  of types of the children of
$u_i$    in   $\bt$.    Let   $\A_\bt$    be   the    family   of    all
$\ca=(m,   (    \textbf{A}^{(m_i,   C_i(\bt))}),   i\in    [d])$,   with
$m=(m_1, \ldots, m_d)\in  \N ^d$ such that, for all  $i\in [d]$, $m_i=0$
if   $\Card(C_i(\bt))=0$   and   $1\leq  m_i\leq   \Card(C_i(\bt))$   if
$\Card(C_i(\bt))>0$.   For   convenience,  we  may  write   $m_\ca$  for
$m$. With this notation, we set:
\[
\tilde S_{m_\ca} =\sum_{i\in [d]} \, \sum_{\ell=1}^{m_i} 
\tilde X_{i,\ell}, 
\quad
G(\ca)=\prod_{i\in [d]}\, \, \prod_{\ell=1}^{m_i} \, \prod_{j\in
  A_\ell^{C_i(\bt)}} \tilde X_{i,\ell}^{(j)},
\]
with the convention that $\sum_\emptyset=0$ and $\prod_\emptyset=1$, and for $k=(k_1, \ldots, k_d)\in \N^d$ such that $k_i\geq d$ for
all $i\in [d]$:
\[
B_k(m_\ca)= \prod_{i\in [d]} \frac{k_i!}{(k_i-m_i)!} \cdot
\]
Since  $\tilde X_{i,\ell}$ for $i\in [d]$, $\ell\in \N^*$
takes values in $\N^d$ and $\sum_{i\in [d]} \sum_{\ell=1}^{m_i} \Card (A_\ell^{C_i(\bt)})=d-1$, we deduce that:
\begin{equation}
   \label{eq:G<H}
0\leq G( \ca)\leq  \val{\tilde S_{m_\ca}}^{d-1}.
\end{equation}

We have the following result.

\begin{cor}
   \label{cor:calc-det}
For $r\in [d]$ and $b, k\in (\N^*)^d$ such that $k\geq
d \ind$, we have:
\[
\E\left[\det(\cs(k,r)); \, S_k=b\right]
= \sum_{\bt\in  \Tau_r}\, \, \sum_{\ca\in \A_\bt} B_k(m_\ca) \, \E\left[ G(\ca); \tilde S_{m_\ca} +
  S_{k-m_\ca}=b \right] .
\]
\end{cor}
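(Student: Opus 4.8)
The starting point is Lemma~\ref{lem:detS}, which expresses $\det(\cs(k,r))$ as $\sum_{\bt\in\Tau_r}\prod_{j\in[d]\setminus\{r\}}S^{(j)}_{j_\bt,k_{j_\bt}}$. Fix $\bt\in\Tau_r$ and look at one summand. Grouping the factors by the type $i$ of the parent, the product $\prod_{j\in[d]\setminus\{r\}}S^{(j)}_{j_\bt,k_{j_\bt}}$ equals $\prod_{i\in[d]}\prod_{j\in C_i(\bt)}S^{(j)}_{i,k_i}$, where $C_i(\bt)=\{j;\,j_\bt=i\}$ is the set of types of children of $u_i$ in $\bt$ (here $C_r(\bt)$ may be nonempty; what is excluded is the type $r$ itself, which is the root and has no parent). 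Now expand each $S^{(j)}_{i,k_i}=\sum_{\ell=1}^{k_i}X^{(j)}_{i,\ell}$ as a sum and multiply out: a term in the expansion of $\prod_{j\in C_i(\bt)}S^{(j)}_{i,k_i}$ is obtained by assigning to each $j\in C_i(\bt)$ an index $\ell\in\{1,\dots,k_i\}$, and then one picks up $\prod_{j\in C_i(\bt)}X^{(j)}_{i,\ell(j)}$. The key combinatorial reorganization is: such an assignment is the same as choosing the set of distinct indices actually used (of some size $m_i$ with $1\le m_i\le\Card(C_i(\bt))$, or $m_i=0$ if $C_i(\bt)=\emptyset$) together with an ordered partition of $C_i(\bt)$ into $m_i$ nonempty blocks $A^{C_i(\bt)}_1,\dots,A^{C_i(\bt)}_{m_i}$, where $A^{C_i(\bt)}_\ell$ is the set of types sharing the $\ell$-th smallest used index. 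Summing over which concrete indices are used (there are $k_i!/(k_i-m_i)!$ ordered choices) produces the factor $B_k(m_\ca)=\prod_i k_i!/(k_i-m_i)!$ after replacing the $\ell$-th used index by the label $\ell$ — this is legitimate because the $X_{i,\ell}$ are i.i.d.\ in $\ell$.

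Second, I would check that the family $\A_\bt$ in the statement is exactly indexing the data just described: an element $\ca=(m,(\textbf{A}^{(m_i,C_i(\bt))}),i\in[d])\in\A_\bt$ records, for each type $i$, the number $m_i$ of distinct "slots" used and an ordered partition $\textbf{A}^{(m_i,C_i(\bt))}=(A^{C_i(\bt)}_1,\dots,A^{C_i(\bt)}_{m_i})$ of $C_i(\bt)$ with $\inf A^{C_i(\bt)}_1<\cdots<\inf A^{C_i(\bt)}_{m_i}$ (the ordering condition being precisely what makes the correspondence with "$\ell$-th smallest used index" a bijection rather than an $m_i!$-to-one map). With this dictionary the term produced is $G(\ca)=\prod_{i\in[d]}\prod_{\ell=1}^{m_i}\prod_{j\in A^{C_i(\bt)}_\ell}\tilde X^{(j)}_{i,\ell}$, after renaming the independent copies $X$ to $\tilde X$ so that the reindexing $\ell\mapsto$ (label) does not collide with the remaining variables. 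At this stage one has, as an identity of random variables,
\[
\det(\cs(k,r)) \;=\; \sum_{\bt\in\Tau_r}\ \sum_{\ca\in\A_\bt} \sum_{\text{used indices}} G(\ca)\big|_{\text{on those indices}},
\]
and the number of choices of used indices contributes $B_k(m_\ca)$.

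Third, I would insert the event $\{S_k=b\}$ and take expectations. Write $S_k=\tilde S_{m_\ca}+S_{k-m_\ca}$ after the relabelling: the $m_i$ variables that have been renamed to $\tilde X_{i,1},\dots,\tilde X_{i,m_i}$ contribute $\tilde S_{m_\ca}=\sum_i\sum_{\ell=1}^{m_i}\tilde X_{i,\ell}$, and the remaining $k_i-m_i$ variables of type $i$ (over all $i$), being i.i.d.\ and independent of the renamed ones, can be amalgamated into an independent copy of the partial sum $S_{k-m_\ca}$; the sum over the $B_k(m_\ca)$ concrete index choices is handled by exchangeability, replacing the sum of $B_k(m_\ca)$ equal expectations by the single factor $B_k(m_\ca)$. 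Since $G(\ca)$ depends only on $\tilde S_{m_\ca}$-variables, this yields
\[
\E\bigl[\det(\cs(k,r));\,S_k=b\bigr]
=\sum_{\bt\in\Tau_r}\ \sum_{\ca\in\A_\bt} B_k(m_\ca)\,\E\bigl[G(\ca);\ \tilde S_{m_\ca}+S_{k-m_\ca}=b\bigr],
\]
which is the claim. The hypothesis $k\ge d\ind$ guarantees $k_i\ge\Card(C_i(\bt))\ge m_i$, so $B_k(m_\ca)$ is well defined and the "used index" count is genuinely $k_i!/(k_i-m_i)!$; the bound \reff{eq:G<H} is not needed here but records that the resulting terms are integrable.

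\textbf{Main obstacle.} The only delicate point is the bookkeeping in the second step: verifying that "assignment $C_i(\bt)\to\{1,\dots,k_i\}$" decomposes bijectively as "(ordered partition of $C_i(\bt)$ with increasing minima) $\times$ (injection of the $m_i$ blocks into $\{1,\dots,k_i\}$)", and that after relabelling the $\ell$-th used index to $\ell$ one may pull the sum over the injection out as the multiplicative constant $B_k(m_\ca)$ using i.i.d.-ness and exchangeability of the $X_{i,\cdot}$. Everything else is substitution of the definitions of $G(\ca)$, $\tilde S_{m_\ca}$, and $B_k(m_\ca)$.
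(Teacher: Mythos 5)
Your proposal is correct and follows the same route as the paper: regroup $\prod_{j\ne r}S^{(j)}_{j_\bt,k_{j_\bt}}=\prod_{i\in[d]}\prod_{j\in C_i(\bt)}\sum_{\ell\le k_i}X^{(j)}_{i,\ell}$, expand, and use exchangeability of $(X_{i,\ell};\ell)$ to collapse the sum over index assignments to the factor $B_k(m_\ca)$ and an independent relabelled copy $\tilde S_{m_\ca}+S_{k-m_\ca}$, which is exactly what the authors compress into ``using the exchangeability \ldots we easily get''. One small consistency point: the ordered-partition blocks should be ordered throughout by their intrinsic minima $\inf A_1^K<\cdots<\inf A_{m_i}^K$ (the paper's convention, as in your ``Main obstacle'' paragraph), not by ``$\ell$-th smallest used index'' as in your opening paragraph, since the count $k_i!/(k_i-m_i)!$ enumerates injections of the block labels into $[k_i]$, not unordered sets of used indices.
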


\begin{proof}
   For $r\in [d]$, $\bt\in \Tau_r$, and $k\in (\N^*)^d$, we have:
\[
\prod_{j\in [d]\setminus   \{r\} }
S^{(j)}_{j_\bt,k_{j_\bt}}
=\prod_{i\in [d]} \, \prod_{j\in C_i(\bt)} \sum_{\ell=1}^{k_i} X_{i,
  \ell}^{(j)}. 
\]
Using the exchangeability of  $( X_{i,  \ell};  \ell\in\mathbb{N}^*)$
for all $i\in
[d]$, we easily get for $b,k\in (\N^*)^d$ such that $k\geq
d \ind$: 
\[
\E\left[\prod_{j\in [d]\setminus   \{r\} }
S^{(j)}_{j_\bt,k_{j_\bt}}; \, S_k=b\right]
= \sum_{\ca\in \A_\bt} B_k(m_\ca) \, \E\left[ G(\ca); \tilde S_{m_\ca} +
  S_{k-m_\ca}=b \right] .
\]
Then use   Lemma  \ref{lem:detS}  to conclude. 
\end{proof}

\subsection{Proof of Theorem \ref{dTheorem}}
\label{sec:proof-main}
We assume that ($H_1$) and ($H_2$) hold.
Let $r\in  [d]$. Let $b\in \N^d$. We have, using Lemma  \ref{lem:CandL} and  Corollary
\ref{cor:calc-det},  for every  $k\geq b+\ind$ such that $\P_{r}(|\tau|=k)>0$:
\begin{multline}
\label{eq:ratio}
\frac{\prod_{i\in [d]} (k_i-b_i) }{\prod_{i\in [d]} k_i} \, \frac{ \P_r(|\tau|=k-b)}{
    \P_{r}(|\tau|=k)}\\
\begin{aligned}
&=   \frac{
  \E\left[\det(\cs(k-b,r)); \, 
  S_{k-b} +{\bf
      e}_r=k-b\right]}
{  \E\left[\det(\cs(k,r)); \, S_k+{\bf
      e}_r=k\right]}\\
&=  \frac{\sum_{\bt\in  \Tau_r}\, \,
\sum_{\ca\in \A_\bt} B_{k-b}(m_\ca) \, \E\left[ G(\ca); \tilde S_{m_\ca} +
  S_{k-b-m_\ca}=k-b-{\bf
      e}_r \right]
}
{\sum_{\bt\in  \Tau_r}\, \, \sum_{\ca\in \A_\bt} B_k(m_\ca) \, \E\left[
    G(\ca); \tilde S_{m_\ca} + 
  S_{k-m_\ca}=k-{\bf
      e}_r \right]}\cdot
\end{aligned}
\end{multline}

The next lemma  is an extension of the strong  ratio limit theorem given
in    \cite{AD14b}.    Its    proof     is    postponed    to    Section
\ref{sec:proof-dL}.  Recall that  $a$  is the  positive normalized  left
eigenvector of the mean matrix $M$.  (Notice that no moment condition is
assumed for $G$ or $H$.)

\begin{lem}
\label{lem:srlt-HG}
Assume that ($H_1$) and ($H_2$) hold.
 Let $G$ and $H$ be two  random variables in $\mathbb{N}$ and
 $\mathbb{N}^d$ respectively,  independent of $(X_{i,\ell};\,
 \ell\in\N^*, i\in[d])$ and such that $\P(G=0)<1$ and a.s. $G\leq
 |H|^{c}$ for some $c\geq 1$.

Set  $(k(n),n\in\N^*)$ and $(s_n,n\in\N^*)$ be two sequences in
$\mathbb{N}^d$ satisfying $\lim_{n\rightarrow\infty} |k(n)|=+\infty $
and  $\lim_{n\rightarrow\infty}{k(n)}/{|k(n)|}
 =\lim_{n\rightarrow\infty}{s_n}/{|k(n)|}=a$.
Then for any given $m$, $b$ $\in\mathbb{N}^d$, we have:
\[
\lim_{n\rightarrow\infty}
\frac{\E [G;\, H+S_{k(n)-m}=s_n-b]}{
\E [G;\, H+S_{k(n)}=s_n]}=1.
\]
\end{lem}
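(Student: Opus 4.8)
The plan is to reduce Lemma \ref{lem:srlt-HG} to the strong ratio limit theorem for the $d$-dimensional random walk $S_n$ (Theorem \ref{theo:neveuth} of the appendix), by conditioning on the values of $G$ and $H$ and controlling the contributions from large $H$. First I would assume without loss of generality that $b=0$ by absorbing $-b$ into $s_n$ (since $s_n/|k(n)| \to a$ is preserved under a bounded shift), and similarly treat $m$ as fixed. Writing $\E[G;\, H+S_{k(n)-m}=s_n] = \sum_{h\in \N^d} \E[G;\, H=h]\, \P(S_{k(n)-m}=s_n-h)$, the idea is that for each fixed $h$ the ratio $\P(S_{k(n)-m}=s_n-h)/\P(S_{k(n)}=s_n)$ converges to $1$ by the strong ratio limit theorem, so one expects the full ratio to converge to $1$ as well, provided the sum is suitably dominated.

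The key steps, in order: (i) invoke the aperiodicity hypothesis ($H_2$) together with primitivity and criticality from ($H_1$) to check that the law of $X_{i,\ell}$ (more precisely, the law of $S_{k(n)}$ built from the $d$ walks) falls under the hypotheses of the $d$-dimensional strong ratio limit theorem, and that the relevant mean vector is (proportional to) $a$, so that the ``diagonal'' direction $s_n/|k(n)|\to a$ is exactly the one along which local probabilities are comparable; (ii) establish a uniform comparison estimate of the form $\P(S_{k(n)-m}=y) \le C\, \P(S_{k(n)}=s_n)$ for all $y$ with $|y-s_n|$ bounded, or more robustly $\P(S_{k(n)-m}=s_n-h) \le C\,\varphi(h)\,\P(S_{k(n)}=s_n)$ for some summable-against-$\E[G;H=h]$ weight $\varphi(h)$; this is where the uniform version of Gnedenko's local theorem (Section \ref{sec:Gnedenko}) and properties of the Legendre--Laplace transform (Section \ref{sec:prel}) enter, giving Gaussian-type upper bounds on $\P(S_{k(n)}=y)$ with the right normalization; (iii) combine the pointwise convergence from the strong ratio theorem with the domination, using the growth bound $G\le |H|^c$ to ensure $\sum_h \E[G;H=h]\,\varphi(h) <\infty$ (here $\E[|H|^c]$ may be infinite, so the bound $\varphi(h)$ must decay fast enough in $|h|$ to compensate — a Gaussian or even just polynomial-with-large-exponent tail in the local theorem suffices since $|h|$ only shifts the argument by a bounded multiple of $\sqrt{|k(n)|}$ before the bound degrades); (iv) conclude by dominated convergence that the ratio of sums tends to $1$.

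The main obstacle I expect is step (ii)–(iii): getting a domination that is uniform in $n$ and simultaneously summable against $\E[G;H=h]$ without assuming any moment on $G$ or $H$. The subtlety is that $h$ ranges over all of $\N^d$, and for $h$ of size comparable to $|k(n)|$ the local CLT comparison $\P(S_{k(n)-m}=s_n-h)/\P(S_{k(n)}=s_n)$ is no longer close to $1$ and in fact can be exponentially small; one must argue that the mass $\E[G;H=h]$ for such large $h$, weighted by the (at most polynomial, by $G\le|H|^c$) factor and by the crude bound $\P(S_{k(n)-m}=s_n-h)\le \P(S_{k(n)}=s_n)\cdot e^{o(|k(n)|)}$ in the moderate-deviation regime, still vanishes in the limit — essentially because $\sum_h \E[G;H=h] \le \E[|H|^c]$ restricted to $|h|>\epsilon|k(n)|$ is itself $o(1)$ when $\E[|H|^c]<\infty$, and when $\E[|H|^c]=\infty$ one truncates $H$ at a level growing slowly with $n$. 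A clean way to organize this is to split the sum at $|h|\le L$ (fixed, then sent to infinity) versus $|h|>L$, handle the first range by the strong ratio theorem and Fatou/dominated convergence on finitely many terms, and bound the tail $|h|>L$ uniformly in $n$ using the local theorem plus the assumption $G\le |H|^c$, making the tail small by choosing $L$ large. This two-scale splitting, together with the uniform local limit bounds from Section \ref{sec:Gnedenko}, is the technical heart of the argument.
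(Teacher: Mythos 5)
There is a genuine gap in two places, and the paper's route is structurally different from yours.

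First, the crucial missing idea is the dimension reduction from the $d$-indexed sum $S_{k(n)}=\sum_{i\in[d]}S_{i,k_i(n)}$ to a single scalar-indexed random walk. The strong ratio limit theorem in the Appendix (Theorem \ref{theo:neveuth}, and its extension Lemma \ref{lem:srlt-H'G}) is stated for a walk $S_n$ with a scalar index $n$, and you cannot apply it directly to $S_{k(n)}$, which has a vector-valued index. The paper's key device is a mixture representation: introduce $Y=(U,V)\in\N^d\times\N^{d-1}$ where $V$ encodes a type $i$ chosen with probability $a_i$ and, conditionally, $U\sim p^{(i)}$; let $W_n=\sum_{\ell=1}^n Y_\ell$ be the corresponding walk on $\Z^{2d-1}$. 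The identity \reff{eq:SY=S}, $\P(W_{|k|}=\delta(s,k))=D(k)\,\P(S_k=s)$ with an explicit multinomial factor $D(k)$, turns the $d$-indexed sum into a single walk of length $|k|$, and the ratio of $D$'s has a trivial limit. Lemma \ref{lem:Y-H2} shows $Y$ is aperiodic on $\Z^{2d-1}$ under ($H_2$), so the one-index ratio theorem applies. Without this conversion your step (i) has nothing to invoke.

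Second, the dominated-convergence strategy in steps (ii)--(iv) runs into the wall you yourself flag: since no moment on $G$ or $H$ is assumed, $\E[G]$ may be infinite, so a uniform-in-$n$ bound of the form $\P(S_{k(n)-m}=s_n-h)\le C\,\P(S_{k(n)}=s_n)$ (which is all the local theorem gives uniformly over the relevant range) does not produce a summable dominant, and for fixed $h$ the ratio tends to $1$, so no $h$-decaying dominant exists either. The two-scale splitting and the ``truncate $H$ at a slowly growing level'' patch are not made precise and it is unclear they can be: you would still need a lower bound on the denominator that beats the escaping mass. The paper sidesteps this entirely by not decomposing over $h$: in Lemma \ref{lem:srlt-H'G} the numerator error term is $\E[G;\,|N_n/n-\mathfrak{p}|>\varepsilon,\,H'+W_n=w_n]$, and the observation that on this event all variables are non-negative so $H'\le w_n$ gives $G\le|w_n|^c$, a fixed polynomially growing quantity. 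This is then crushed by Hoeffding's exponential bound together with the lower bound of Lemma \ref{lem:lowerbd} on $\P(W_n=w_n-h)$ for one conveniently chosen atom $(g,h)$ of $(G,H')$. That argument requires no integrability of $G$ or $H$, which is exactly what the hypotheses demand.
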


Let  $(k(n),n\in\N^*)$ be  a sequence  of elements  in $\mathbb{N}^d$
such    that    $\lim_{n\rightarrow\infty}    |k(n)|=+\infty    $    and
$\lim_{n\rightarrow\infty}{k(n)}/|k(n)| =a$.  Since $\P(G(\ca)=0)<1$ and
thanks to \reff{eq:G<H}, we deduce from Lemma \ref{lem:srlt-HG} that:
\[
\lim_{n\rightarrow+\infty }
\frac{\E\left[ G(\ca); \tilde S_{m_\ca} +
  S_{k(n)-b-m_\ca}=k(n)-b-{\bf
      e}_r \right]}
{\E\left[
    G(\ca); \tilde S_{m_\ca} + 
  S_{k(n)-m_\ca}=k(n)-{\bf
      e}_r \right]} =1.
\]
We also have:
\[
\lim_{n\rightarrow+\infty }
\frac{B_{k(n)-b}(m_\ca)}{B_{k(n)}(m_\ca)}=1.
\]
Since all the terms in \reff{eq:ratio} are non-negative, and 
$\lim_{n\rightarrow+\infty } \prod_{i\in [d]} (k_i(n)-b_i)/k_i(n)=1$, 
we deduce that:
\begin{equation}
   \label{eq:keyeq}
\lim_{n\rightarrow+\infty }
 \frac{ \P_r(|\tau|=k(n)-b)}{
    \P_{r}(|\tau|=k(n))}=1.
\end{equation}
Then, using Lemmas \ref{key1}  (with $i=r$ in \reff{eq:cond-global}), we
obtain that, for all $r\in [d]$, $\bt\in \T_0$ and $x\in \cl_0(\bt)$ such
that    $\cm(x)=r$,    $\lim_{n\rightarrow+\infty    }    \P_r(\tau_n\in
\T(\bt,x))=\P_r(\tau^*\in  \T(\bt,x))$.  Of  course  we have  for $\bt\in
\T_0$ and $n$  large
enough that $\P_r(\tau_n=\bt)=0=\P_r(\tau^*=\bt)$.  We
deduce from  Corollary \ref{cor:cv} that $(\tau_n,  n\in \N^*)$ converges
in distribution towards $\tau^*$ under  $\P_r$ for all $r\in [d]$.  

Let $\alpha$ be  a
probability distribution on $[d]$. Let $\bt\in \T_0$ and $x\in \cl_0(\bt)$.
Set $r=\cm(\emptyset_\bt)$ and $i= \cm(x)$. We have using \reff{eq:cond-global} that:
\begin{align*}
   \P_{\alpha}(\tau_n\in\mathbb{T}(\bt,x)) 
&= \frac{\alpha(r)\, \P_r (\tau\in\mathbb{T}(\bt,x), \,
  |\tau|=k(n))}{\sum_{j\in
  [d]} \alpha(j) \,  \P_j( |\tau|=k(n))}
= \frac{\alpha(r) a^*_r}{\sum_{j\in
  [d]} \alpha(j) a^*_i \Gamma_j(n)}\, \P_r (\tau^*\in\mathbb{T}(\bt,x)),
\end{align*}
where 
\[
\Gamma_j(n)=\frac{\P_j(|\tau|=k(n))}{\P_i(|\tau|=k(n) - |\bt| 
  +\mathbf{e}_i)},
\]
with the convention that $\Gamma_j(n)=+\infty $ if $\P_i(|\tau|=k(n) - |\bt| 
  +\mathbf{e}_i)=0$. 
Let $\bt'\in \T_0$ and $x'\in \bt'$ such that $\cm(x)=i$ and
$\P_j(\tau^*\in \T(\bt', x'))>0$ (which is possible thanks to Lemma
\ref{lem:t'1}). Using \reff{eq:cond-global} and the convergence of $(\tau_n,  n\in \N^*)$  towards $\tau^*$ under  $\P_j$, we deduce that
$\lim_{n\rightarrow+\infty } 
\frac{\P_j(|\tau|=k(n))}{\P_i(|\tau|=k(n) - |\bt'| 
  +\mathbf{e}_i)}=a^*_j/a^*_i$. Then use \reff{eq:keyeq} (with $r=i$) to
deduce that $\lim_{n\rightarrow+\infty } \Gamma_j(n)=a^*_j/a^*_i$ for all
$j\in [d]$. Using the definition \reff{eq:hat-a} of $\hat \alpha$, we deduce that:
\[
\lim_{n\rightarrow+\infty }   \P_{\alpha}(\tau_n\in\mathbb{T}(\bt,x)) =
\frac{\alpha(r) a^*_r}{\sum_{j\in
  [d]} \alpha(j) a^*_j}\, \P_r (\tau^*\in\mathbb{T}(\bt,x))
= \P_{\hat \alpha} (\tau^*\in\mathbb{T}(\bt,x)),
\]
where $\P_ {\hat \alpha} (d\tau^*)=\sum_{r \in [d]} \hat \alpha(r)
\P_r(d\tau^*)$.

\subsection{Proof of Lemma \ref{lem:srlt-HG}}
\label{sec:proof-dL}
We assume ($H_1$). In particular, this implies that  $\P(X_{i,1}=0)>0$
for some $i\in [d]$. Without loss of generality, we can assume this
holds for $i=d$: $\P(X_{d,1}=0)>0$. 

Recall that  $a$ is the normalized  left positive eigenvector of the  mean matrix
$M$ such that $|a|=1$. In particular $a$ is a probability on $[d]$.  Set
$\mathbf{v}_d=0\in    \N^{d-1}$    and    for    $i\in    [d-1]$,    set
$\mathbf{v}_i=(v_i^{(1)},  \ldots, v_i^{(d-1)})\in  \N^{d-1}$ such  that
$v_i^{(j)}=\ind_{\{j=i\}} $ for $j\in [d-1]$.  Let $Y=(U,V)$ be a random
variable  in   $  \N^d\times  \N^{d-1}$   such  that  for   $i\in  [d]$,
$\P(V=\mathbf{v}_i)=a_i$, and  the distribution of $U$  conditionally on
$\{V=\mathbf{v}_i\}$ is $p^{(i)}$.

Recall Definition \ref{defi:aperiodic} of an aperiodic probability distribution.
\begin{lem}
   \label{lem:Y-H2}
Under ($H_2$), the distribution of $Y$ on $\Z^{2d-1}$ is aperiodic.
\end{lem}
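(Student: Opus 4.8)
The plan is to work directly with the subgroup $\Lambda\subseteq\Z^{2d-1}$ generated by $\supp(Y)-\supp(Y)$ and to show it is all of $\Z^{2d-1}=\Z^d\times\Z^{d-1}$. First I would identify the support of $Y$. Since we work under $(H_1)$, the mean matrix $M$ is primitive, so the left eigenvector $a$ is positive, i.e.\ $a_i>0$ for every $i\in[d]$; hence $(u,\mathbf{v}_i)\in\supp(Y)$ exactly when $u\in\supp(p^{(i)})$, and
\[
\supp(Y)=\bigcup_{i\in[d]}\Big(\supp(p^{(i)})\times\{\mathbf{v}_i\}\Big).
\]

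The second step recovers the first $d$ coordinates. Taking the difference of two points of $\supp(Y)$ lying in the same block $i$ gives $\big(\supp(p^{(i)})-\supp(p^{(i)})\big)\times\{0\}\subseteq\Lambda$ for each $i\in[d]$. Therefore $\Lambda$ contains the subgroup generated by $\bigcup_{i\in[d]}\big(\supp(p^{(i)})-\supp(p^{(i)})\big)$, viewed inside the first $d$ coordinates; by assumption $(H_2)$ this subgroup is $\Z^d$, so $\Z^d\times\{0\}\subseteq\Lambda$.

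The third step recovers the last $d-1$ coordinates. Fix $i\in[d-1]$ and pick any $u\in\supp(p^{(i)})$ and $u'\in\supp(p^{(d)})$ (both supports are nonempty). Then $(u,\mathbf{v}_i)-(u',\mathbf{v}_d)=(u-u',\mathbf{e}_i)\in\Lambda$ since $\mathbf{v}_i-\mathbf{v}_d=\mathbf{e}_i$ for $i\in[d-1]$, and subtracting $(u-u',0)\in\Z^d\times\{0\}\subseteq\Lambda$ yields $(0,\mathbf{e}_i)\in\Lambda$. As $\mathbf{e}_1,\dots,\mathbf{e}_{d-1}$ generate $\Z^{d-1}$, we get $\{0\}\times\Z^{d-1}\subseteq\Lambda$; combined with the previous step this gives $\Lambda=\Z^d\times\Z^{d-1}=\Z^{2d-1}$, which is exactly aperiodicity of $Y$.

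There is no serious obstacle here: the argument is a direct manipulation of generating sets of subgroups. The only points requiring a little care are the justification that $a_i>0$ for all $i$ (so that every block $\supp(p^{(i)})\times\{\mathbf{v}_i\}$ genuinely appears in $\supp(Y)$, in particular for $i=d$), and the observation that it is precisely the differences $\mathbf{v}_i-\mathbf{v}_d=\mathbf{e}_i$ — and not merely the differences $\mathbf{e}_i-\mathbf{e}_j$, which only generate the hyperplane lattice $\{x:\sum_k x_k=0\}$ — that are needed to obtain all of $\Z^{d-1}$.
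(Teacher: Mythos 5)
Your proof is correct and follows essentially the same route as the paper's: you first extract $\Z^d\times\{0\}$ from within-block differences using $(H_2)$, then obtain $(0,\mathbf{e}_i)$ for $i\in[d-1]$ by differencing against the type-$d$ block and subtracting off a first-coordinate vector already known to lie in the subgroup. Your explicit remark that $a_i>0$ for all $i$ (so every block actually appears in $\supp(Y)$) fills in a detail the paper leaves implicit.
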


\begin{proof}
  Recall $\mathbf{v}_d=0\in \N^{d-1}$. Let  $H$ be the smallest subgroup
  of $\Z^{2d-1}$  that contains  $\supp(F) -\supp(F)$,  with $F$  be the
  probability  distribution of  $Y$.  In  particular, we  have that  $H$
  contains  $(\supp(p  ^{(i)})-\supp(p ^{(i)}))\times  \{\mathbf{v}_d\}$
  for  all $i\in  [d]$ and  thus their  union. Since  ($H_2$) holds,  we
  deduce that $H$ contains  $\Z^d \times \{\mathbf{v}_d\}$. This implies
  also that $(0, \mathbf{v}_i)$  belongs to $H$ for all $i\in  [d]$, and thus
  $H=\Z^{2d-1}$.
\end{proof}

For $x\in \R^d$ and $z=(z_1, \ldots, z_d)\in \R^d$, we set
$\delta(x,z)=(x, z_1, \ldots, z_{d-1})$. By definition of
$Y$ and since $a$ is the left eigenvector of the mean matrix with
eigenvalue 1, we have
$\E[Y]=\delta(a,a)$. 

We consider $(Y_\ell, \ell\in \N^*)$ independent random variables
distributed as $Y$. We set $W_n=\sum_{\ell=1}^n Y_\ell$. Let $s\in
\N^d$ and $k\in (\N^*)^d$. We have:
\begin{equation}
   \label{eq:SY=S}
\P\left(W_{|k|}=\delta(s,k)\right) =D(k)\P(S_{k}=s)
\quad\text{with}\quad
D(k)=\frac{|k|!}{\prod_{i\in     [d]}     k_i!}    \prod_{i\in     [d]}
{a_i}^{k_i}. 
\end{equation}

Recall $G$ and $H$ given in Lemma \ref{lem:srlt-HG}. 
We set $H'=\delta(H,0) \in \N^{2d-1}$. We get for $k$,
$m$, $s$ and $b$ in $\N^d$:
\begin{equation}
   \label{eq:S=DW}
\frac{\E [G;\, H+S_{k-m}=s-b]}{
\E [G;\, H+S_{k}=s]}
=\frac{D(k)}{D(k-m)}\, 
\frac{\E [G;\, H'+W_{|k|-|m|}=\delta(s,k)-\delta(b,m)]}{
\E [G;\, H'+W_{|k|}=\delta(s,k)]}\cdot
\end{equation}

Thanks to Lemma  \ref{lem:Y-H2} and ($H_2$), the distribution  of $Y$ on
$\Z^{2d-1}$  is aperiodic.   Since $0\leq  G\leq |H|^c$, we  also have
$0\leq    G\leq   |H'|^c$     and   $\P(G=0)<1$.     Let
$(k(n),n\in\N^*)$  and   $(s_n,n\in\N^*)$  be  two   sequences  in
$\mathbb{N}^d$                                                satisfying
$\lim_{n\rightarrow\infty}|k(n)|=+\infty $ and
$\lim_{n\rightarrow\infty} k(n)/|k(n)|
=\lim_{n\rightarrow\infty}s_n/|k(n)|=a$. Notice, this implies
that $\lim_{n\rightarrow\infty} \delta(s_n, k(n))/|k(n)|=\E[Y_1]$. We
deduce from  Lemma \ref{lem:srlt-H'G} that:
\[
\lim_{n\rightarrow+\infty } \frac{\E [G;\, H'+W_{|k(n)|-|m|}=\delta(s_n,k(n))-\delta(b,m)]}{
\E [G;\, H'+W_{|k(n)|}=\delta(s_n,k(n))]}=1.
\]
Then notice that $\lim_{n\rightarrow+\infty } D(k(n))/D(k(n)-m)=1$ as
$\lim_{n\rightarrow+\infty } k(n)/|k(n)|=a$. And use \reff{eq:S=DW} to get:
\[
\lim_{n\rightarrow+\infty}
\frac{\E [G;\, H+S_{k(n)-m}=s_n-b]}{
\E [G;\, H+S_{k(n)}=s_n]}=1.
\]
This ends the proof of Lemma \ref{lem:srlt-HG}.

\section{Appendix}

\subsection{Preliminary results}
\label{sec:prel}
For $x\in \R^d$  and $\delta\geq 0$, let $\cb(x,\delta)$ be  the open ball
of  $\R^d$ centered  at $x$  with  radius $\delta$.   For any  non-empty
subset $A$ of $\R^d$,  denote: $\cv A$ the convex hull  of $A$, $\clo A$
the closure of  $A$, $\inter A$ the  interior of $A$, $\aff  A=x_0+ \Span(A-x_0)$ the affine
hull of $A$ where  $x_0\in A$ and, if $A$ is convex,  $\ri A$ the  relative interior  of $A$:
\[
\ri A=\{x \in A; \, \aff A \bigcap \cb(x,\delta) \subset A \text{ for some
  $\delta>0$}\}.
\]
Notice that,  for $A$ convex,  we have $\inter A=\ri  A$ if and  only if
$\aff  A=\R^d$.  For  a  function $f$  on $\R^d$  taking  its values  in
$\R\bigcup   \{+\infty  \}$,   its   domain is defined by  $\dom(f)=\{x\in\R^d:
f(x)<\infty\}$.

Let $F$  be a  probability distribution  on $\R^d$ and  $X$ be  a random
variable  on $\R^d$  with  distribution $F$.  Denote  by $\supp(F)$  the
closed support  of $F$:  $x\not \in  \supp(F)$ if  and only  if $\P(X\in
\cb(x,\delta))=0$  for  some $\delta>0$.  Denote  also  by $\cv(F)$  the
convex hull of  its support, $\aff(F)$ and $\ri(F)$ the  affine hull and
the relative interior of $\cv(F)$.   We define $\varphi$ the log-Laplace
of $X$ taking values in $(-\infty , +\infty ]$ as:
\begin{equation}
   \label{eq:def-fi}
\varphi(\theta)=\log\left(\E\left[\expp{\langle\theta, X\rangle}\right]\right),
\quad \theta\in \R^d.
\end{equation}
The  function $\varphi$  is convex,  $\varphi(0)=0$ (which  implies that
$\varphi$  is  proper),  and  lower-semicontinuous  (thanks  to  Fatou's
lemma).  Its conjugate, $\psi$, is defined by:
\begin{equation}
   \label{eq:psi}
\psi(x)=\sup_{\theta\in \dom(\varphi)} \left(  \langle\theta, x\rangle -\varphi(\theta)
\right), \quad x\in \R^d.
\end{equation}
We  recall  that  $\psi$   is  a  lower-semicontinuous  (proper)  convex
function. Since  $\varphi(0)=0$, we deduce that  $\psi$ is non-negative.
We first give a general lemma on the domain of $\psi$.

\begin{lem}
   \label{lem:ri-F}
   Let  $F$   be  a   probability  distribution   on  $\R^d$.   We  have
   $\ri(F)=\ri    \dom(\psi)$.    If    $\psi(x)=0$,   then    we   have
   $x\in \ri \dom(\psi)$.
\end{lem}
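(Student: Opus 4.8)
The plan is to prove the sandwich $\ri(F)\subseteq\dom(\psi)\subseteq\clo(\cv(F))$ and then to read off both assertions by elementary convex‑analysis bookkeeping, with the second claim coming from a supporting‑hyperplane argument. From the outset I would reduce to the case $\aff(F)=\R^d$: along directions orthogonal to $\aff(F)-\aff(F)$ the log‑Laplace $\varphi$ is affine, so $\psi=+\infty$ off $\aff(F)$ and on $\aff(F)$ coincides with the conjugate of the log‑Laplace of $X$ computed inside $\aff(F)$, while relative interiors are unchanged by passing to the affine hull. After this reduction $\ri(F)=\inter(\cv(F))$.

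The inclusion $\dom(\psi)\subseteq\clo(\cv(F))$ is easy: if $x\notin\clo(\cv(F))$, strictly separate $x$ from the closed convex set $\clo(\cv(F))$ to get $\theta_0$ and $\varepsilon>0$ with $\langle\theta_0,X\rangle\le\langle\theta_0,x\rangle-\varepsilon$ a.s.; then $\langle t\theta_0,x\rangle-\varphi(t\theta_0)\ge t\varepsilon\to+\infty$, so $\psi(x)=+\infty$. The reverse inclusion $\inter(\cv(F))\subseteq\dom(\psi)$ is the heart of the matter. Given $x\in\inter(\cv(F))$, choose $\delta>0$ with $\cb(x,\delta)\subseteq\cv(F)$. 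For each unit vector $\eta$, $x+\tfrac\delta2\eta\in\cv(F)$ forces $\sup_{y\in\supp(F)}\langle\eta,y-x\rangle\ge\tfrac\delta2$, so the open half‑space $\{z:\langle\eta,z-x\rangle>\delta/4\}$ meets $\supp(F)$ and hence has positive $F$‑mass: $f(\eta):=\P(\langle\eta,X-x\rangle>\delta/4)>0$. Since $\eta\mapsto\ind_{\{\langle\eta,z-x\rangle>\delta/4\}}$ is lower semicontinuous for each fixed $z$, Fatou's lemma makes $f$ lower semicontinuous, hence $q:=\min_{\norm{\eta}=1}f(\eta)>0$ by compactness of the sphere. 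Bounding $\E[\expp{t\langle\eta,X\rangle}]$ from below by the contribution of the event $\{\langle\eta,X-x\rangle>\delta/4\}$ then gives $\langle t\eta,x\rangle-\varphi(t\eta)\le-t\delta/4-\log q\le-\log q$ for all $t\ge0$ and unit $\eta$, whence $\psi(x)\le-\log q<+\infty$. (Equivalently, one could invoke that a closed proper convex function whose recession function is positive in every nonzero direction is bounded below, applied to $\varphi-\langle\cdot,x\rangle$.)

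To conclude the first assertion, note that $\dom(\psi)$ is convex (as $\psi$ is), and $\inter(\cv(F))\subseteq\dom(\psi)\subseteq\clo(\cv(F))=\clo(\inter(\cv(F)))$; for convex sets $A\subseteq B\subseteq\clo(A)$ one has $\clo A=\clo B$ and therefore $\ri(A)=\ri(\clo A)=\ri(\clo B)=\ri(B)$, so $\ri\dom(\psi)=\inter(\cv(F))=\ri(F)$. For the second assertion, suppose $\psi(x)=0$; then $x\in\dom(\psi)\subseteq\clo(\cv(F))$, and if $x\notin\inter(\cv(F))$ there is a supporting hyperplane at $x$, i.e.\ a nonzero $\eta$ with $\langle\eta,X\rangle\le\langle\eta,x\rangle$ a.s. From $\psi(x)=0$ and the choice $\theta=t\eta$ ($t>0$) we get $\E[\expp{t\langle\eta,X-x\rangle}]\ge1$, while the integrand is $\le1$ a.s.; hence it equals $1$ a.s., forcing $\langle\eta,X\rangle=\langle\eta,x\rangle$ a.s. and thus $\supp(F)$ inside a hyperplane --- contradicting $\aff(F)=\R^d$. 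So $x\in\inter(\cv(F))=\ri\dom(\psi)$.

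The step I expect to be the main obstacle is $\inter(\cv(F))\subseteq\dom(\psi)$: a direction‑by‑direction lower bound on $\varphi$ is immediate, but one must upgrade it to a bound on $\langle\theta,x\rangle-\varphi(\theta)$ that is uniform in the direction of $\theta$, which is exactly what the lower‑semicontinuity‑plus‑compactness argument delivers. Everything else --- the affine‑hull reduction, the separating and supporting hyperplane arguments, and the relative‑interior manipulations --- is routine.
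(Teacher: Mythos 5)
Your proof is correct and follows essentially the same strategy as the paper: establish the sandwich $\ri(F)\subset\dom\psi\subset\clo\cv(F)$ via a strict separation on one side and a uniform lower bound on $\varphi$ over the unit sphere on the other, then use a supporting hyperplane at a relative-boundary point to show $\psi>0$ there. The only real variation is the device used for $\ri(F)\subset\dom\psi$ — the paper bounds $\varphi$ below by working with the continuous function $h(\theta)=\E[\max(0,\min(1,\langle\theta,X\rangle))]$ on the sphere in $\aff(F)$ while you use the lower semicontinuous probability $\eta\mapsto\P(\langle\eta,X-x\rangle>\delta/4)$ (plus an explicit preliminary reduction to $\aff(F)=\R^d$ where the paper instead projects $\theta$ onto $\aff(F)$) — and both devices yield the same conclusion by compactness of the sphere.
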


\begin{proof}
Let $x\not \in \clo \ri(F)=\clo \cv(F)$.  According to the separation  theorem, there
  exists $\theta\in \R^d$ and $\varepsilon>0$ such that a.s. $\langle
  \theta, X-x \rangle\leq - \varepsilon$. This gives that for all $t>0$,
  $\varphi(t\theta) - t \langle \theta, x \rangle \leq - t\varepsilon$ and thus
$\psi(x) \geq \sup_{t>0} t\varepsilon=+\infty $. This implies that
$\dom(\psi) \subset \clo \ri(F)$. 

Let $x\in \ri(F)$. By translation  invariance, we can assume that $x=0$.
We set  $h(\theta)=\E[\max(0,\min(1, \langle \theta, X  \rangle))]$. The
function     $h$      is     continuous     and, since $0\in \ri(F)$, it
is non      zero     on
$\ca=\{\theta\in  \aff(F),  \, |\theta|=1\}$. Thus  $h$ has a  strictly
positive           minimum           on           $\ca$.           Since
$\P(\langle   \theta,    X   \rangle>0)\geq    h(\theta/|\theta|)$   for
$\theta\neq 0$, we deduce that $a=\inf_{\theta \in \aff(F) \backslash  \{0\}}
\log( \P(\langle   \theta,    X   \rangle>0))$ is finite. 
For $\theta\in \R^d$, let $\theta_F$ denote its  orthogonal projection
on $\aff(F)$. If $\theta_F=0$, then $\varphi(\theta)=0$, otherwise we have
$\varphi(\theta)=\varphi(\theta_F)\geq  \log( \P(\langle   \theta_F,    X
\rangle>0))\geq a$. We deduce that $\varphi\geq a$ and we get
$\psi(x)=\psi(0)\leq
-a$. We deduce that $x\in \dom(\psi)$. This implies that
$\ri(F) \subset \dom(\psi)$. 

We deduce that $\ri(F) \subset \dom(\psi) \subset \clo \ri(F)$, which
gives that $\ri(F) = \ri \dom(\psi) $.

We denote  by $\partial  (F)=\clo \ri( F)\setminus  \ri F$  the relative
boundary of $\dom(\psi)$.  Let $x\in \partial (F)$. Let $X$  be a random
variable with probability distribution  $F$. According to the separation
theorem,    there     exists    $q\in     \R^d$    such     that    a.s.
$\langle  q,  X-x  \rangle\leq  0$   and  $\P(\langle  q,  X-x  \rangle<
0)>0$.
This   implies  that   $\varphi(q)<  \langle   q,x  \rangle$   and  thus
$\psi(x)\geq  \langle  q,x  \rangle  - \varphi(q)>0$.  This  gives  that
$\psi(x)=0$ implies $x\in \ri \dom(\psi)$.
\end{proof}

We have the following corollary.
\begin{cor}
   \label{cor:ri-F}
Let $X$ be a random
variable on $\R ^d $ with probability distribution $F$. If $X$ is integrable
then $\E[X]$ belongs to $\ri \dom (\psi)$ and $\psi(\E[X])=0$.
\end{cor}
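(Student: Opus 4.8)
The plan is to reduce the statement to Lemma \ref{lem:ri-F} by showing that the mean $\E[X]$ lies in $\ri(F)$ and that $\psi$ vanishes there. For the inclusion $\E[X]\in\ri\dom(\psi)$, the key observation is that, by Lemma \ref{lem:ri-F}, we have $\ri\dom(\psi)=\ri(F)$, so it suffices to prove $\E[X]\in\ri(F)$. By translation invariance I may assume $\E[X]=0$, and then I must show $0\in\ri(\cv(F))$, i.e.\ that $0$ is in the relative interior of the convex hull of the support. If this fails, then by the separation theorem there is a nonzero $\theta\in\aff(F)$ with $\langle\theta,X\rangle\le 0$ a.s.; since $\E[\langle\theta,X\rangle]=\langle\theta,\E[X]\rangle=0$, this forces $\langle\theta,X\rangle=0$ a.s., contradicting $\theta\in\aff(F)\setminus\{0\}$ (as $\aff(F)$ is the affine hull of the support, a direction in which the support is a single point cannot lie in $\Span(\supp(F)-x_0)$ unless it is zero). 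Hence $0\in\ri(F)$, which gives $\E[X]\in\ri(F)=\ri\dom(\psi)$.

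For the second assertion, $\psi(\E[X])=0$: by definition $\psi(\E[X])=\sup_{\theta\in\dom(\varphi)}\left(\langle\theta,\E[X]\rangle-\varphi(\theta)\right)$, and Jensen's inequality applied to the convex function $x\mapsto \expp{\langle\theta,x\rangle}$ gives $\varphi(\theta)=\log\E[\expp{\langle\theta,X\rangle}]\ge\langle\theta,\E[X]\rangle$ for every $\theta\in\R^d$. Therefore $\langle\theta,\E[X]\rangle-\varphi(\theta)\le 0$ for all $\theta\in\dom(\varphi)$, so $\psi(\E[X])\le 0$; combined with the non-negativity of $\psi$ (which was already noted after \reff{eq:psi}, since $\varphi(0)=0$), we conclude $\psi(\E[X])=0$. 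Alternatively, once $\E[X]\in\ri\dom(\psi)$ is established, one could invoke the last sentence of Lemma \ref{lem:ri-F} in the contrapositive together with standard convex-conjugacy facts, but the direct Jensen argument is cleaner and self-contained.

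The only mild subtlety — and the step I would be most careful about — is the separation argument showing $0\in\ri(F)$: one must separate within the affine hull $\aff(F)$ rather than in all of $\R^d$ (otherwise the separating functional could be orthogonal to $\aff(F)$ and carry no information), and then use that $\aff(F)$ genuinely is the affine span of the support to rule out the degenerate case $\langle\theta,X\rangle\equiv 0$. Everything else is a routine application of Jensen's inequality and Lemma \ref{lem:ri-F}.
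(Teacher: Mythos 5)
Your proof is correct, but it takes a longer route than the paper for the membership $\E[X]\in\ri\dom(\psi)$. The paper simply proves $\psi(\E[X])=0$ by the Jensen argument you give (which is identical to theirs), and then quotes the \emph{last sentence} of Lemma~\ref{lem:ri-F} — namely that $\psi(x)=0$ implies $x\in\ri\dom(\psi)$ — to finish in one line. You instead re-derive this implication from scratch for the particular point $\E[X]$ by a supporting-hyperplane argument inside $\aff(F)$; this essentially reproduces the separation argument that the paper already ran in the proof of Lemma~\ref{lem:ri-F} (the paragraph about $\partial(F)$). Your argument is sound (the conclusion $\langle\theta,X\rangle=0$ a.s.\ forces $\supp(F)\subset\theta^\perp$, hence $\aff(F)\subset\theta^\perp$, hence $\theta=0$ once $\theta$ is taken in the direction space of $\aff(F)$), but it duplicates work that the lemma was designed to encapsulate. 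Also, your closing remark — that one could ``alternatively'' deduce $\psi(\E[X])=0$ from $\E[X]\in\ri\dom(\psi)$ by contraposing the last sentence of Lemma~\ref{lem:ri-F} — has the logical direction reversed: that sentence gives $\psi(x)=0\Rightarrow x\in\ri\dom(\psi)$, so its contrapositive tells you nothing about the value of $\psi$ at a point already known to be in the relative interior. The Jensen argument is not merely ``cleaner''; it is the argument, and it is exactly what the paper does.
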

\begin{proof}
  Jensen's     inequality    implies     that
  $\varphi(\theta)\geq  \langle  \theta,   \E[X]  \rangle$.  This  gives
  $\langle  \theta, \E[X]  \rangle  -\varphi(\theta) \leq  0$. Then  use
  \reff{eq:psi} and  that  $\psi$ is non-negative to deduce that
  $\psi(\E[X])=0$. Use
  Lemma     \ref{lem:ri-F} to conclude.
\end{proof}

For $\theta\in \dom(\psi)$, we define a probability measure on $\R^d$ by:
\begin{equation}
   \label{eq:P_theta}
d\P_{\theta}(X\in dx)=\expp{\langle \theta,X\rangle-\varphi(\theta)}
d\P(X\in dx).
\end{equation}
We denote by
$m_\theta$ and $\Sigma_\theta$ the corresponding mean vector and
covariance matrix if they exist, i.e:
\begin{equation}
   \label{eq:m-s}
m_\theta=\E_\theta[ X]=\E [X\expp{\langle\theta,
  X\rangle-\varphi(\theta)}]=\nabla \varphi(\theta)
\quad\text{and}\quad \Sigma_\theta=\Cov_\theta(X, X).
\end{equation}
We set $\ci_F=\inter \dom(\varphi)$ the interior of the domain of the
log-Laplace of $F$. Notice that $X$ under $\P_\theta$
has small exponential moment for
$\theta\in \ci_F$ and its mean and covariance matrix are thus
well-defined for $\theta\in \ci_F$. For a symmetric positive
semi-definite matrix $\Sigma$, we denote by $\val{\Sigma}$ its
determinant. The elementary proof of the next lemma is left to the
reader. 

\begin{lem}
\label{lem:L3b}
Let $F$ be a probability distribution on $\R^d$. For any compact set $K \subset
\ci_F$, we have:
\begin{equation}
   \label{eq:L3bound}
\sup_{\theta\in K} |\Sigma_\theta| <+\infty
\quad\text{and}\quad
\sup_{\theta\in K}  \E_\theta\left[|X-m_\theta|^3\right] <+\infty .
\end{equation}
\end{lem}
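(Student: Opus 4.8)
The plan is to reduce both bounds to a single \emph{uniform} exponential moment estimate for $X$ under $\P_\theta$, $\theta\in K$. Since $K$ is compact and $\ci_F=\inter\dom(\varphi)$ is open with $K\subset\ci_F$, I would first fix a radius $\delta>0$ such that the compact set $K_\delta=\{\theta+\eta;\ \theta\in K,\ \norm{\eta}\le\delta\}$ is still contained in $\ci_F$ (any $\delta>0$ works when $\ci_F=\R^d$). As $\varphi$ is convex and finite on the open set $\ci_F$, it is continuous there, hence bounded on the compact set $K_\delta$; set $M=\sup_{K_\delta}\varphi<+\infty$ and $m=\inf_{K}\varphi>-\infty$. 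Then from the definition \reff{eq:P_theta} of $\P_\theta$, for every $\theta\in K$ and every $\eta$ with $\norm{\eta}\le\delta$,
\[
\E_\theta\big[\expp{\langle\eta, X\rangle}\big]
=\expp{\varphi(\theta+\eta)-\varphi(\theta)}\le \expp{M-m}=:C<+\infty .
\]

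Next I would turn this into control of the moments of $|X|$. Taking $\eta=\pm\delta\,\mathbf{e}_i$ gives $\E_\theta[\expp{\delta|X^{(i)}|}]\le 2C$ for all $i\in[d]$ and all $\theta\in K$. Since $|x|=\sum_{i\in[d]}|x_i|$, the arithmetic--geometric mean inequality yields $\expp{(\delta/d)|x|}=\prod_{i\in[d]}\expp{(\delta/d)|x_i|}\le d^{-1}\sum_{i\in[d]}\expp{\delta|x_i|}$, hence
\[
\sup_{\theta\in K}\E_\theta\big[\expp{(\delta/d)\,|X|}\big]\le 2C .
\]
Because $t\mapsto t^{p}\expp{-(\delta/d)t}$ is bounded on $[0,+\infty)$ for every $p\ge 0$, it follows that $C_p:=\sup_{\theta\in K}\E_\theta[|X|^{p}]<+\infty$ for all $p\ge 0$; in particular, using $|m_\theta|=|\E_\theta[X]|\le\E_\theta[|X|]$, we get $\sup_{\theta\in K}|m_\theta|\le C_1<+\infty$.

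It remains to assemble these estimates. Using $|X^{(i)}-m_\theta^{(i)}|\le |X-m_\theta|$ and $|X-m_\theta|^2\le 2(|X|^2+|m_\theta|^2)$, each entry of $\Sigma_\theta$ obeys $\val{(\Sigma_\theta)_{ij}}=\val{\Cov_\theta(X^{(i)},X^{(j)})}\le \E_\theta[|X-m_\theta|^2]\le 2(C_2+C_1^2)$; since $\det\Sigma_\theta$ is a fixed polynomial of the $d^2$ entries of $\Sigma_\theta$, this gives $\sup_{\theta\in K}\val{\Sigma_\theta}<+\infty$. For the third moment, $(a+b)^3\le 4(a^3+b^3)$ for $a,b\ge0$ yields
\[
\sup_{\theta\in K}\E_\theta\big[|X-m_\theta|^3\big]
\le 4\sup_{\theta\in K}\big(\E_\theta[|X|^3]+|m_\theta|^3\big)
\le 4\,(C_3+C_1^3)<+\infty ,
\]
which is the second claim in \reff{eq:L3bound}.

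The only genuine point in the argument is the uniform exponential moment bound, i.e. the existence of the uniform radius $\delta$ together with the finiteness of $\sup_{K_\delta}\varphi$; this is exactly where compactness of $K$ and the fact that a convex function finite on an open set is locally bounded (hence continuous) on that set are used. Everything downstream --- extracting coordinatewise moments, bounding $|m_\theta|$, and controlling the determinant and the third moment --- is elementary and uniform in $\theta\in K$.
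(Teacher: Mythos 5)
Your proof is correct. The paper explicitly leaves this lemma to the reader as ``elementary,'' so there is no written argument in the paper to compare against; your route --- thickening $K$ to a compact $K_\delta\subset\ci_F$, using continuity of the finite convex function $\varphi$ on the open set $\ci_F$ to get $\E_\theta[\expp{\langle\eta,X\rangle}]=\expp{\varphi(\theta+\eta)-\varphi(\theta)}\le C$ uniformly, and then deducing a uniform exponential moment bound and hence uniform polynomial moment bounds --- is precisely the intended elementary argument, and every step (the AM--GM passage from coordinatewise to $\ell^1$ exponential moments, the bound $\val{(\Sigma_\theta)_{ij}}\le\E_\theta[|X-m_\theta|^2]$, and the determinant being a polynomial in the entries) checks out.
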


We set $\co_F=\inter \cv(F)$ the interior of the convex hull of the
support of $F$.
\begin{lem}
   \label{lem:full}
Assume $\co_F$ is non-empty and bounded. Then the application $\theta
\mapsto m_\theta$ is one-to-one from $\R^d$ onto $\co_F$ and continuous
as well as its inverse. In particular, for any compact
set $K\subset\co_F$, there exists $r$ such that $K\subset \{m_\theta; \, |\theta|\leq r\}$.
\end{lem}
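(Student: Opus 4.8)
The plan rests on the classical duality between $\varphi$ and its Legendre transform, made rigorous here by the hypothesis that $\co_F$ is bounded. Since $\co_F=\inter\cv(F)$ is bounded, $\cv(F)$ is bounded, hence $\supp(F)$ is compact and $X$ is a bounded random variable; therefore $\varphi$ is finite (indeed $C^\infty$, by differentiation under the expectation) on all of $\R^d$, so $\ci_F=\R^d$, and $\nabla\varphi(\theta)=m_\theta$, $\nabla^2\varphi(\theta)=\Sigma_\theta$ for every $\theta\in\R^d$. The first step is to check that $\Sigma_\theta$ is positive definite for all $\theta$: since $\co_F\neq\emptyset$, $\supp(F)$ affinely spans $\R^d$ and so lies in no hyperplane; as $\P_\theta$ and $\P$ are mutually absolutely continuous they share the same support, whence $\langle v,\Sigma_\theta v\rangle$, the variance of $\langle v,X\rangle$ under $\P_\theta$, is strictly positive for every $v\neq 0$. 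Consequently $\varphi$ is strictly convex, so $\theta\mapsto m_\theta=\nabla\varphi(\theta)$ is injective, and since its differential $\Sigma_\theta$ is everywhere invertible, the inverse function theorem shows it is a local diffeomorphism; in particular its image is open in $\R^d$.

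The second step identifies this image with $\co_F$. On the one hand, $m_\theta=\E_\theta[X]$ lies in the (compact) convex hull $\cv(F)$ of $\supp(\P_\theta)=\supp(F)$, and an open subset of $\cv(F)$ is contained in $\inter\cv(F)=\co_F$, so the image lies in $\co_F$. On the other hand, fix $x\in\co_F$ and set $F_x(\theta)=\varphi(\theta)-\langle\theta,x\rangle$, a finite, continuous, strictly convex function on $\R^d$ whose unique critical point, if any, is precisely the $\theta$ with $m_\theta=x$. The crux of the proof is that $F_x$ is coercive. For a unit vector $u$, $t>0$ and $\eta>0$,
\[
\varphi(tu)=\log\E\bigl[\expp{t\langle u,X\rangle}\bigr]\geq t\bigl(\langle u,x\rangle+\eta\bigr)+\log\P\bigl(\langle u,X-x\rangle>\eta\bigr).
\]
Since $x\in\inter\cv(F)$, a closed ball around $x$ lies in $\cv(F)$, so there is $\rho>0$ with $\sup_{y\in\supp(F)}\langle u,y-x\rangle\geq\rho$ for every unit $u$; taking $\eta=\rho/2$, the probabilities $p(u)=\P(\langle u,X-x\rangle>\rho/2)$ are strictly positive (the open set $\{\langle u,\cdot-x\rangle>\rho/2\}$ meets $\supp(F)$), and since $u\mapsto p(u)$ is lower semicontinuous by Fatou's lemma it is bounded below by some $p_0>0$ on the compact unit sphere. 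Hence $F_x(\theta)\geq\frac{\rho}{2}\,|\theta|+\log p_0\to+\infty$ as $|\theta|\to\infty$, and a continuous, strictly convex, coercive function attains its infimum at a unique point $\theta^\ast$, where $\nabla F_x(\theta^\ast)=0$, i.e. $m_{\theta^\ast}=x$. Thus $\theta\mapsto m_\theta$ is a bijection of $\R^d$ onto $\co_F$.

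Being a smooth bijection with everywhere invertible differential, $\theta\mapsto m_\theta$ is a diffeomorphism of $\R^d$ onto $\co_F$, so it and its inverse are continuous. For the last assertion, if $K\subset\co_F$ is compact then its preimage under $\theta\mapsto m_\theta$ is compact (by continuity of the inverse), hence contained in some ball $\{|\theta|\leq r\}$, and then $K\subset\{m_\theta;\,|\theta|\leq r\}$, as required. The only non-routine ingredient is the uniform-in-direction coercivity estimate for $F_x$; everything else combines the strict convexity of $\varphi$ with the inverse function theorem.
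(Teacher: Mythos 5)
Your proof is correct, and it takes a genuinely different, more elementary route than the paper. The paper deduces the bijectivity and bicontinuity of $\nabla\varphi$ from abstract convex duality: once it knows $\varphi$ is finite, differentiable and strictly convex on all of $\R^d$, it invokes Rockafellar's Theorem 26.5 (on Legendre-type functions) to get a homeomorphism $\nabla\varphi\colon\R^d\to D$, and Corollary 26.4.1 together with Lemma \ref{lem:ri-F} to identify the open set $D=\nabla\varphi(\R^d)$ with $\ri\dom(\psi)=\ri(F)=\co_F$. You instead argue directly: positive definiteness of $\Sigma_\theta$ (from $\aff(F)=\R^d$ and the equivalence of $\P_\theta$ with $\P$) gives strict convexity, hence injectivity of $\theta\mapsto m_\theta$; the inverse function theorem gives that the map is a local diffeomorphism, hence its image is an open subset of $\cv(F)$ and thus lies in $\co_F$; and surjectivity onto $\co_F$ is proved by a coercivity argument for $F_x(\theta)=\varphi(\theta)-\langle\theta,x\rangle$, whose key step is the uniform-in-direction bound $\inf_{\norm{u}=1}\P(\langle u,X-x\rangle>\rho/2)=p_0>0$ (obtained from $x\in\inter\cv(F)$, lower semicontinuity of $u\mapsto\P(\langle u,X-x\rangle>\rho/2)$ via Fatou, and compactness of the sphere). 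Both arguments are sound; the paper's version is shorter once the Rockafellar machinery is accepted as a black box and ties neatly into Lemma \ref{lem:ri-F}, whereas yours is longer but entirely self-contained, and the explicit coercivity estimate makes the surjectivity transparent rather than hidden inside a citation.
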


\begin{proof}
  It is easy to check, using Hölder's inequality, that if  $\co_F$ is
  non-empty then $\varphi$ is strongly convex on its domain. If $\co_F$
  is bounded, then $X $ is also bounded and the function $\varphi$ is
  finite on $\R  ^d$, so that $\dom(\varphi)=\R^d$, as well as
  differentiable throughout $\R^d$. This implies that
  $\varphi$ is smooth on $\R^d$ in the sense of \cite{c:ca} Section
  26. According to Theorem 26.5 in \cite{c:ca}, this implies that
  $\nabla \varphi$ is one-to-one from $\R^d$ onto the open set $D=\nabla
  \varphi(\R^d)$,  continuous, as is $\nabla
  \varphi^{-1}$. Furthermore, according to Corollary 26.4.1  in
  \cite{c:ca}, we have $\ri \dom(\psi) \subset D \subset
  \dom(\psi)$. Since $D$ is open, we deduce that $D= \ri \dom(\psi)=
  \inter \dom(\psi)$. Then, use Lemma \ref{lem:ri-F} to get that
  $D=\ri(F)=\co_F$.
\end{proof}

Recall Definition \ref{defi:aperiodic} for an aperiodic probability distribution.
\begin{lem}
   \label{lem:dound-i}
Assume $F$ is an aperiodic probability distribution on $\Z^d$. Then, we
have  that $\co_F$  is non-empty and that for any compact set $K \subset \ci_F$,
\begin{equation}
   \label{eq:Lwbound}
\inf_{\theta\in K} |\Sigma_\theta| >0.
\end{equation}
\end{lem}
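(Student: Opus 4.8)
The plan is to show first that $\co_F$ is non-empty, and then establish the uniform lower bound \reff{eq:Lwbound} by a compactness-plus-contradiction argument. For the non-emptiness: since $F$ is aperiodic on $\Z^d$, the group generated by $\supp(F)-\supp(F)$ is all of $\Z^d$, so $\supp(F)$ cannot be contained in any proper affine sub-lattice, hence (taking real affine hulls) $\aff(F)=\R^d$. By Lemma \ref{lem:ri-F} we have $\ri(F)=\ri\dom(\psi)$, and since $\aff(F)=\R^d$ the relative interior of the convex set $\cv(F)$ coincides with its interior; thus $\co_F=\inter\cv(F)=\ri(F)$ is non-empty. (If one prefers a direct argument: pick affinely independent points $x_0,\dots,x_d$ in $\supp(F)$, which exist precisely because $\aff(F)=\R^d$; their convex hull is a non-degenerate simplex whose interior is contained in $\co_F$.)

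Next I turn to \reff{eq:Lwbound}. Fix a compact set $K\subset\ci_F=\inter\dom(\varphi)$. The map $\theta\mapsto\Sigma_\theta$ is continuous on $\ci_F$ (differentiating \reff{eq:m-s} under the integral sign, valid on the open set where $\varphi$ is finite), hence so is $\theta\mapsto|\Sigma_\theta|$; by compactness the infimum in \reff{eq:Lwbound} is attained at some $\theta_0\in K$. So it suffices to show $|\Sigma_{\theta_0}|>0$, i.e.\ that $\Sigma_{\theta_0}$ is strictly positive definite. Suppose not: then there is a unit vector $u\in\R^d$ with $u^T\Sigma_{\theta_0}u=\Var_{\theta_0}(\langle u,X\rangle)=0$, so $\langle u,X\rangle$ is $\P_{\theta_0}$-a.s.\ equal to a constant $c$. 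Since $\P_{\theta_0}$ is equivalent to $\P=F$ by \reff{eq:P_theta}, we get $\langle u,x\rangle=c$ for all $x\in\supp(F)$. Consequently $\langle u, x-y\rangle=0$ for all $x,y\in\supp(F)$, hence $\langle u,z\rangle=0$ for every $z$ in the subgroup generated by $\supp(F)-\supp(F)$, which is $\Z^d$ by aperiodicity. Taking $z=\mathbf{e}_1,\dots,\mathbf{e}_d$ forces $u=0$, contradicting $|u|=1$. Therefore $\Sigma_{\theta_0}$ is positive definite, $|\Sigma_{\theta_0}|>0$, and the uniform bound follows.

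I expect the only genuine subtlety to be the continuity of $\theta\mapsto\Sigma_\theta$ on $\ci_F$ together with the attainment of the infimum: this is where the restriction to $K\subset\ci_F$ (rather than merely $K\subset\dom(\psi)$) is used, since on the open set $\ci_F$ the variable $X$ has finite exponential moments in a neighbourhood of each point and one may differentiate $\varphi$ twice under the expectation, giving $\Sigma_\theta=\nabla^2\varphi(\theta)$ continuous. Everything else is elementary: the equivalence of $\P_\theta$ and $\P$ from \reff{eq:P_theta} transfers the degeneracy of $\Sigma_{\theta_0}$ to an affine constraint on $\supp(F)$, and aperiodicity then rules this out exactly as in the proof of Lemma \ref{lem:ape}.
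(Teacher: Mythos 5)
Your proof is correct and follows essentially the same route as the paper: aperiodicity forces $\aff(F)=\R^d$, giving non-emptiness of $\co_F$; degeneracy of $\Sigma_\theta$ for $\theta\in\ci_F$ would make $\langle h,X\rangle$ a.s.\ constant under $\P_\theta$ and hence under $\P$ (by equivalence from \reff{eq:P_theta}), contradicting $\aff(F)=\R^d$; and continuity of $\theta\mapsto|\Sigma_\theta|$ on $\ci_F$ with compactness of $K$ yields the uniform bound. The only cosmetic difference is that you locate the minimiser $\theta_0$ first and then show $|\Sigma_{\theta_0}|>0$, whereas the paper shows $|\Sigma_\theta|>0$ for all $\theta\in\ci_F$ and then invokes continuity; these are equivalent.
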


\begin{proof}
Since $F$ is aperiodic, we have  $\aff(F)=\R^d$. This implies
the first part of the lemma.

Let $\theta\in  \ci_F$ be such  that $ |\Sigma_\theta| =0$.   Then there
exists $h\in  \R^d\setminus \{0\}$  such that $\langle  h, \Sigma_\theta
h\rangle=0$.    This   implies   that  $\P_\theta$-a.s.    $\langle   h,
X\rangle=c$  with $c=\langle  h, m_\theta\rangle$.   This equality  also
holds $\P$-a.s.   as the two  probability measures $\P$  and $\P_\theta$
are equivalent.   Since $\aff(F)=\R^d$, we get $h=0$. Since this is
absurd, we  deduce that $  |\Sigma_\theta| >0$  for all
$\theta\in  \ci_F$.  Then  use  the continuity of $\theta\mapsto
|\Sigma_\theta|$ on $\ci_F$ to get the second part of the lemma. 
\end{proof}

\subsection{Gnedenko's $d$-dimensional local theorem}
\label{sec:Gnedenko}

Recall  the  definitions  of   $\varphi$,  $\P_\theta$,  $m_\theta$  and
$\Sigma_\theta$   given  by   \reff{eq:def-fi},  \reff{eq:P_theta}   and
\reff{eq:m-s} and  that $\ci_F=\inter \dom (\varphi)$.  The next theorem
is an extension  of the one-dimensional theorem  of Gnedenko \cite{G48},
see also \cite{R61,S66}.

\begin{theo}
\label{Gth2}
Let $F$  be an  aperiodic probability distribution  on $\Z^d$  such that
$\ci_F$   is  non-empty.   Let  $(X_\ell,   \ell\in\mathbb{N}^*)$  be
independent   random   variables   with   distribution   $F$   and   set
$S_n=\sum_{\ell=1}^n  X_\ell$ for  $n\in  \N^*$.  Then  for any  compact
subset    $K$   of $\ci_F$,    we   have:
\begin{equation}
   \label{d2}
\lim_{n\rightarrow\infty}\, \sup_{\theta\in       K}\, \sup_{s\in\mathbb{Z}^d}
\Big|n^{{d}/{2}}|\Sigma_\theta|^{{1}/{2}}\P_\theta(S_n=s)-
(2\pi)^{-d/2}   \expp{-\norm{z_n(\theta,s)}^2/2} \Big|= 0, 
    \end{equation}   
with $z_n(\theta,s)=n^{-1/2}\Sigma_{\theta}^{-1/2} (s-nm_{\theta})$.
\end{theo}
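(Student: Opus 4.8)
The plan is to run the classical Fourier-analytic proof of a local limit theorem on $\Z^d$, while keeping every error estimate uniform over the compact family of exponentially tilted laws $\{\P_\theta;\ \theta\in K\}$. Write $\Phi_\theta(t)=\E_\theta\big[\expp{i\langle t,X\rangle}\big]$ for the characteristic function of $X$ under $\P_\theta$. Since $X$ is $\Z^d$-valued, Fourier inversion on the torus gives, for all $s\in\Z^d$ and $n\in\N^*$,
\[
\P_\theta(S_n=s)=(2\pi)^{-d}\int_{[-\pi,\pi]^d}\expp{-i\langle t,s\rangle}\,\Phi_\theta(t)^n\,dt .
\]
First I would record the uniform estimates supplied by the preliminary lemmas. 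By Lemma \ref{lem:L3b} the second and third absolute moments of $X$ under $\P_\theta$ are bounded on $K$, so the eigenvalues of $\Sigma_\theta$ are bounded above on $K$; together with $\inf_{\theta\in K}|\Sigma_\theta|>0$ from Lemma \ref{lem:dound-i} this forces the eigenvalues of $\Sigma_\theta$ to lie in a fixed interval $[\lambda,\Lambda]\subset(0,\infty)$ for all $\theta\in K$. Moreover $\theta\mapsto m_\theta=\nabla\varphi(\theta)$ is continuous and, for $\theta$ in a compact subset of $\ci_F$, so is $(\theta,t)\mapsto\Phi_\theta(t)$ (dominated convergence, using the local boundedness of exponential moments).

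Next I would establish two local facts, both uniform in $\theta\in K$. \emph{(i)} A uniform Taylor expansion at the origin: using the uniform bound on $\E_\theta[|X-m_\theta|^3]$, there are $\delta_0>0$ and $C<\infty$ depending only on $K$ with $\Phi_\theta(t)\ne0$ and $\log\Phi_\theta(t)=i\langle t,m_\theta\rangle-\tfrac12\langle t,\Sigma_\theta t\rangle+R_\theta(t)$, $|R_\theta(t)|\le C\norm{t}^3$, for $\norm{t}\le\delta_0$ and $\theta\in K$; shrinking $\delta_0$ so that $C\delta_0\le\lambda/4$ yields $|\Phi_\theta(t)|^n\le\expp{-\tfrac14 n\lambda\norm{t}^2}$ on $\norm{t}\le\delta_0$. \emph{(ii)} A uniform spectral gap away from $0$: since $\P_\theta$ and $\P$ are mutually absolutely continuous they have the same support, so aperiodicity of $F$ gives $|\Phi_\theta(t)|<1$ for every $t\in[-\pi,\pi]^d\setminus\{0\}$ and every $\theta\in K$; by continuity of $(\theta,t)\mapsto|\Phi_\theta(t)|$ on the compact set $K\times\{t\in[-\pi,\pi]^d:\norm{t}\ge\delta\}$ one gets, for each fixed $\delta\in(0,\delta_0]$, a constant $\rho=\rho(\delta)<1$ bounding $|\Phi_\theta(t)|$ there.

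Then I would split the inversion integral at $\norm{t}=\delta$. On $\norm{t}\ge\delta$ the corresponding term is at most $n^{d/2}\Lambda^{d/2}\rho^n$ in absolute value, uniformly in $(\theta,s)$, hence negligible. On $\norm{t}<\delta$ I substitute $t=\Sigma_\theta^{-1/2}u/\sqrt n$ (Jacobian $n^{-d/2}|\Sigma_\theta|^{-1/2}$) and write $s=n m_\theta+\sqrt n\,\Sigma_\theta^{1/2}z_n(\theta,s)$; inserting the expansion of $\log\Phi_\theta$, the terms of order $\sqrt n$ in the exponent cancel and one gets
\[
n^{d/2}|\Sigma_\theta|^{1/2}(2\pi)^{-d}\!\int_{\norm{t}<\delta}\!\!\expp{-i\langle t,s\rangle}\Phi_\theta(t)^n\,dt=(2\pi)^{-d}\!\int_{D_{n,\theta}}\!\!\expp{-i\langle u,z_n(\theta,s)\rangle-\frac12\norm{u}^2+\varepsilon_{n,\theta}(u)}\,du,
\]
where $D_{n,\theta}=\{u:\norm{\Sigma_\theta^{-1/2}u}<\delta\sqrt n\}$ contains the ball of radius $\delta\sqrt{\lambda n}$, and $\varepsilon_{n,\theta}(u)=nR_\theta(\Sigma_\theta^{-1/2}u/\sqrt n)$ satisfies $|\varepsilon_{n,\theta}(u)|\le C\lambda^{-3/2}\norm{u}^3/\sqrt n$ and, by the choice of $\delta$, $\Re\big(-\tfrac12\norm{u}^2+\varepsilon_{n,\theta}(u)\big)\le-\tfrac14\norm{u}^2$ on $D_{n,\theta}$. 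Since $(2\pi)^{-d/2}\expp{-\norm{z_n(\theta,s)}^2/2}=(2\pi)^{-d}\int_{\R^d}\expp{-i\langle u,z_n(\theta,s)\rangle-\frac12\norm{u}^2}\,du$ and $|\expp{-i\langle u,z_n(\theta,s)\rangle}|=1$, the difference between the last display and the target is bounded, uniformly in $s$, by $(2\pi)^{-d}\int_{\R^d}\big|\expp{-\frac12\norm{u}^2+\varepsilon_{n,\theta}(u)}\ind_{D_{n,\theta}}(u)-\expp{-\frac12\norm{u}^2}\big|\,du$, which tends to $0$ by dominated convergence with dominating function $\expp{-\frac14\norm{u}^2}+\expp{-\frac12\norm{u}^2}$; this convergence is uniform in $\theta\in K$ since $\ind_{D_{n,\theta}}(u)\to1$ and $\varepsilon_{n,\theta}(u)\to0$ uniformly on bounded sets (or, if the supremum over $K$ failed to vanish, extract $\theta_n\to\theta_\ast\in K$ and rerun the estimates). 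Adding the two pieces yields \reff{d2}.

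The main obstacle is precisely this uniformity bookkeeping: the one-parameter version of the argument is textbook, but here the Taylor remainder $R_\theta$, the ellipticity constants $\lambda,\Lambda$ of $\Sigma_\theta$, the gap $\rho$, and the final dominated-convergence step must all be controlled simultaneously over the compact set $K$. What makes this possible is that exponential tilting preserves the support, hence the aperiodicity, of $F$---so fact \emph{(ii)} holds for all $\theta$ at once by compactness---together with the uniform moment estimates of Lemmas \ref{lem:L3b} and \ref{lem:dound-i}.
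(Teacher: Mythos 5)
Your proof is correct and reaches the same conclusion by the same broad strategy as the paper---Fourier inversion on the torus, a near/far split of the integral, aperiodicity plus compactness to beat down the far part, and a uniform Taylor control near the origin---but the bookkeeping is organized quite differently. The paper works directly with powers of the (rescaled) characteristic function $f_\theta$, avoiding the complex logarithm altogether, and handles the inner region by the elementary inequality $|f_\theta(t)^n-e^{-\norm t^2/2}|\le n|f_\theta(t)-e^{-\norm t^2/(2n)}|$; this requires cutting the integration domain into \emph{four} pieces, with a slowly growing radius $C_n=o(n^{1/(12+6d)})$ separating the innermost region from an intermediate one where one only knows the crude bound $|f_\theta(t)|\le 1-\norm t^2/3n$, and then everything is pushed through with explicit algebraic estimates. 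Your version instead cuts once at a fixed $\delta$, substitutes into $\exp(n\log\Phi_\theta)$, observes the cancellation of the order-$\sqrt n$ phase, and closes with a dominated-convergence argument (made uniform in $\theta$ by compactness). The two routes are genuinely different: yours is shorter and closer to the modern textbook local CLT (at the price of needing $\Phi_\theta\neq0$ near $0$, uniformly on $K$, to justify taking logarithms, and a slightly delicate uniformization of the DCT step); the paper's is longer but entirely elementary, never touches a logarithm of a complex number, and produces explicit rates. Both hinge on the same two uniform inputs---boundedness of the third moment and of $|\Sigma_\theta|^{\pm1}$ on $K$ (Lemmas \ref{lem:L3b} and \ref{lem:dound-i}) and the strict bound $|\Phi_\theta(t)|<1$ away from $0$ coming from strong aperiodicity, which, as you correctly note, is preserved by exponential tilting since $\P_\theta\sim\P$.

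One small remark: your final uniformity step, parenthetically justified by ``extract $\theta_n\to\theta_\ast$ and rerun,'' is fine but can be made direct: first truncate the outer integral at radius $R$ with uniform Gaussian tail control, then observe that on $\{\norm u\le R\}$ one has $\ind_{D_{n,\theta}}\equiv1$ for $n>(R/\delta\sqrt\lambda)^2$ and $|\varepsilon_{n,\theta}(u)|\le C\lambda^{-3/2}R^3/\sqrt n$, both uniformly over $\theta\in K$, so the inner integral goes to zero uniformly without passing to subsequences.
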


The end of this section is devoted to the proof of Theorem \ref{Gth2}.

Let   $K\subset \ci_F$ be compact. 
Thanks to Lemmas \ref{lem:L3b} and \ref{lem:dound-i}, we have
$|\Sigma_\theta|>0$ and $\Sigma^{-1/2}_\theta$ is well defined.
We define:
\begin{equation}
 \label{defi:Y}
 Y=n^{-1/2}\Sigma_{\theta}^{-1/2} (X_1-m_{\theta})
\quad\text{and}\quad
f_{\theta}(t)=\E_\theta\left[\expp{i\langle t,\, Y\rangle}\right].
\end{equation}
By the inversion formula, we know that for $s\in \Z^d$:
\begin{align*}
(2\pi)^d \P_\theta(S_n=s)
&=\int_{(-\pi,\pi)^d}
\E_\theta\left[\expp{i\langle u , S_n-s \rangle }\right]du\\
&=\int_{(-\pi,\pi)^d}
\E_\theta\left[\expp{i\langle n^{1/2}\Sigma_\theta^{1/2} u, \,
n^{-1/2}\Sigma_\theta^{-1/2} (S_n-s)\rangle}\right]du\\ 
&=\int_{(-\pi,\pi)^d}\E_\theta\left[\expp{i\langle
  n^{1/2}\Sigma_{\theta}^{1/2} u,\, Y\rangle}\right]^n\, 
\expp{-i\langle n^{1/2}\Sigma_{\theta}^{1/2} u ,\,
  z_n(\theta,s)\rangle}du.
\end{align*}
In   order  to   simplify  the   notation,  we   shall  write   $z$  for
$z_n(\theta,s)$.    By    considering    the    change    of    variable
$t=n^{1/2}\Sigma_{\theta}^{1/2} u$, we obtain:
\[
(2\pi)^d\P_{\theta}(S_n=s)
=n^{-d/2}|\Sigma_\theta|^{-1/2} \int_{\mathcal{J}_\theta} 
f_\theta(t) ^n\, \expp{-i\langle
  t,\,  z\rangle}dt,
\]
where                            $\mathcal{J}_\theta=\{t\in\mathbb{R}^d:
n^{-1/2}\Sigma_{\theta}^{-1/2}  t\in(-\pi,\pi)^d\}$.  We
set:
\[
    I_n(\theta)=n^{d/2}|\Sigma_\theta|^{1/2}
\P_{\theta}(S_n=s)-(2\pi)^{-d/2}\expp{-\norm{z}^2/2}.
\]
  Notice that
\[
  (2\pi)^{d/2}\expp{-\norm{z}^2/2}=\int_{\mathbb{R}^d}
\expp{-\norm{t}^2/2-i\langle   t,z\rangle}dt.
\]
We obtain:
\[
(2\pi)^d \, I_n(\theta)
= \int_{\R^d}
\left(\ind_{\mathcal{J}_\theta}(t) f_{\theta}(t)^n- \expp{-\norm{t}^2/2}\right) \expp{-i\langle
  t,z\rangle}dt.
\]

Let $(C_n,n\in \N^*)$ be a sequence of positive numbers such that:
\begin{equation}
   \label{eq:def-Cn}
\lim_{n\rightarrow\infty}C_n=\infty \quad \text{and} \quad
\lim_{n\rightarrow\infty}n^{-1/(12+6d)}C_n=0.
\end{equation}
We  deduce, using the expression  of $\Sigma_\theta^{-1}$ based
on the cofactors, that $\theta\mapsto \Sigma_\theta ^{-1}$ is continuous
on  $\ci_F$.  This implies that  $\norm{\Sigma_{\theta}^{-1/2}t}^2= \langle
t,\Sigma_{\theta}^{-1}t\rangle$ is continuous in $(\theta, t)$ on
$\ci_F\times \R^d$. We
deduce that:
\begin{equation}
\label{defi:c1}
 c_1:=\sup_{\theta\in K,\, \norm{t}=1}\langle t,\Sigma_{\theta}^{-1}
 t\rangle<\infty. 
\end{equation} 
Set  $J_1=\{t\in\mathbb{R}^d; \, 
\norm{t}\leq             C_n\}$, so that $t\in J_1$ implies 
$\norm{n^{-1/2}\Sigma_{\theta}^{-1/2}  t}^2\leq n^{-1}c_1\norm{t}^2 \leq
n^{-1}c_1 C_n^2$. Thanks to \reff{eq:def-Cn}, we get there exists $n_1$ finite,
such that $J_1\subset \mathcal{J}_\theta$ for all $n\geq n_1$ and all $\theta\in K$.  

For
$\varepsilon\in (0,1)$ and $n\geq n_1$, we obtain:
\begin{equation}
   \label{eq:In(theta)}
(2\pi)^d|I_n(\theta)|  
\leq  \int_{\R^d}
\val{\ind_{\mathcal{J}_\theta}(t) f_{\theta}(t)^n- \expp{-\norm{t}^2/2}}
dt
\leq I_{n,1}(\theta)+I_{n,2}(\theta)+I_{n,3}(\theta)+I_{n,4}   ,
\end{equation}
with
\[
I_{n,1}(\theta)=\int_{J_1}|f_{\theta}(t)^n
-\expp{-\norm{t}^2/2}|dt,
\quad 
I_{n,2}(\theta)=\int_{J_{2,\theta}}|f_{\theta}(t)|^n dt,\quad
I_{n,3}(\theta)=\int_{J_{3,\theta}}|f_{\theta}(t)|^ndt,
\]
and $I_{n,4}=\int_{  J_1^c}\expp{-\norm{t}^2/2}dt$ as well as
        $J_{2,\theta}=\{t\in\mathbb{R}^d;
\,\norm{t}>C_n \text{ and } n^{-1/2}\norm{\Sigma_\theta^{-1/2}
t}<\varepsilon\}$,                   $J_{3,\theta}=\{t\in \cj_\theta; \,
n^{-1/2}\norm{\Sigma_\theta^{-1/2}
t}\geq \varepsilon \}$. The proof of the Theorem will be complete as
soon as we prove the converge of the terms $I_{n,i}$ to 0 for
$i\in \{1, \ldots, 4\}$ uniformly for $\theta\in K$ (notice the terms $I_{n,i}$ do
not depend on $s\in \Z^d$).\\

\subsubsection{Convergence of $I_{n,4}$}

Notice that $I_{n,4}$ does not depend on $\theta$. And we deduce from
\reff{eq:def-Cn} that $\lim_{n\rightarrow\infty}I_{n,4}=0$. 

\subsubsection{Convergence of $I_{n,3}$}

Set   $h(\theta,u)=|\E_{\theta}[\expp{i\langle  u,   X_1\rangle}]|$  for
$u\in\mathbb{R}^d$        and       $L=\{u\in        [-2\pi+\varepsilon,
2\pi-\varepsilon]^d;  \,  \norm{u}\geq  \varepsilon\}$.   Since  $F$  is
aperiodic,  we  deduce from  Proposition  P8  in \cite[p.75]{S01},  that
$h(\theta,u)<1$ for $u\in L$.  Since  $h$ is continuous in $(\theta, t)$
on  the compact  set  $K\times  L$, there  exists  $\delta<1$ such  that
$h(\theta,u)\leq \delta$ on $K\times  L$. We get for $\theta\in K$:
\[
I_{n,3}(\theta)
\leq n^{d/2}|\Sigma_\theta|^{1/2}
\int_{(-\pi,      \pi)^d}
h(\theta,u)^n\, \ind_{\{\norm{u}\geq \varepsilon\}}\, du 
\leq                n^{d/2}|\Sigma_\theta|^{1/2}
 (2\pi)^d \delta^n,   
\]
where we used that $|f_\theta(t)|=h(\theta,u)$ with
$t=n^{1/2}\Sigma_{\theta}^{1/2} u$ for the first inequality
and that $h$ is bounded by $\delta$ on $\{u\in 
 (-\pi, \pi)^d;\,  \norm{u}\geq \varepsilon\}$. 
Thanks to \reff{eq:L3bound} we have  $\sup_{\theta\in
  K}|\Sigma_\theta|<\infty$ and since $\delta<1$, we get $
\lim_{n\rightarrow\infty}\sup_{\theta\in
  K}I_{n,3}(\theta)=0$.

\subsubsection{Convergence of $I_{n,2}$}

From \reff{eq:L3bound}, we have
\begin{equation}
\label{defi:a2}
a_2:=\sup_{\theta\in K}\E_\theta[\norm{X_1-m_\theta}^2]<\infty
\quad \text{and}\quad
a_3:=\sup_{\theta\in K}\E_\theta[\norm{X_1-m_\theta}^3]<\infty.
\end{equation}
Using $c_1$ defined in \reff{defi:c1}, we can choose $\varepsilon$ small enough such that
\begin{equation}
\label{condi:epsilon}
 \varepsilon^2 a_2+ \varepsilon a_3 c_1<1.
\end{equation} 

Recall $Y=n^{-1/2}\Sigma_{\theta}^{-1/2} (X_1-m_{\theta})$.
By the symmetry of $\Sigma_{\theta}$, 
we get that
\begin{equation}
   \label{eq:EY2}
\E_{\theta}\left[\norm{Y}^2\right]=
\frac{1}{n} 
\E_{\theta}\Big[\langle
X_1-m_{\theta},\Sigma_{\theta}^{-1}(X_1-m_{\theta})\rangle\Big] 
=\frac{1}{n}\sum_{j=1}^d\sum_{\ell=1}^d
\Big[\Sigma_{\theta}^{-1}(j, \ell)\Sigma_{\theta}(\ell, j)\Big]
=\frac{d}{n}\cdot
\end{equation}
Using similar computations, we obtain:
\begin{equation}
   \label{equ:tY2}
\E_{\theta}\left[\langle t,Y\rangle^2\right]= \frac{\norm{t}^2}{n}\cdot
\end{equation}

 Recall notations $a_3$ in (\ref{defi:a2}) and $c_1$ in (\ref{defi:c1}).
For $t\in J_{2,\theta}$, we get:
\begin{equation}
\label{equ:tY3}
\E_{\theta}\left[|\langle t, Y\rangle|^3\right]
\leq n^{-3/2} \norm{\Sigma_{\theta}^{-1/2} t}^3\, 
\E_{\theta}[\norm{X_1-m_{\theta}}^3]
\leq\frac{\norm{t}^2}{n}\, \varepsilon a_3 c_1\leq \frac{\norm{t}^2}{n},
\end{equation}
where we used
 $n^{-1/2}\norm{\Sigma_{\theta}^{-1/2} t}<\varepsilon$, \reff{defi:a2}
 and \reff{defi:c1}
 for the second inequality and
 \reff{condi:epsilon} for the last. Recall $a_2$ given in
 \reff{defi:a2}. From \reff{condi:epsilon} and since $t\in
 J_{2,\theta}$, we get:
\begin{equation}
   \label{eq:Y<1}
 \E_\theta\left[\langle t, Y\rangle^2\right]
\leq
\norm{n^{-1/2}\Sigma_{\theta}^{-1/2} t}^2\, 
 \E_\theta[\norm{X_1-m_\theta}^2]\leq \varepsilon^2 a_2<1.
 \end{equation} 
 We  deduce that, for all $\theta\in K$ and $t\in J_{2,\theta}$,
\begin{align*}
|f_\theta(t)|=
|\E_{\theta}[\expp{i\langle t,Y\rangle}]|
&=\Big|1-\frac{\E_{\theta}[|\langle t,Y\rangle|^2]}{2}-i
\E_{\theta}\left[\int_0^{\langle t,Y\rangle}\int_0^v\int_0^s \expp{iu}du
  ds dv\right]\Big|\\ 
&\leq 1-\frac{\E_{\theta}[|\langle
  t,Y\rangle|^2]}{2}+\E_{\theta}\left[\int_0^{|\langle
  t,Y\rangle|}\int_0^v\int_0^s ~du ds dv\right]\\ 
&= 1-\frac{\E_{\theta}[|\langle
  t,Y\rangle|^2]}{2}+\frac{\E_{\theta}[|\langle t,Y\rangle|^3]}{6}\\ 
&\leq 1-\frac{\norm{t}^2}{2n}
+\frac{\norm{t}^2}{6n}=1-\frac{\norm{t}^2}{3n},
\end{align*}
where we used that $\E_\theta[Y]=0$
for the first equality, that $\E_\theta[\langle t, Y\rangle^2]\leq 1$
for the first inequality (see \reff{eq:Y<1}) and 
\reff{equ:tY2} as well as \reff{equ:tY3} for the last inequality.
Therefore, we get that:
\[
I_{n,2}(\theta)\leq
\int_{J_{2,\theta}}|f_\theta(t)|^n dt
\leq\int_{J_{2,\theta}}\left(1-\frac{\norm{t}^2}{3n}\right)^n dt
\leq\int_{\norm{t}>C_n}\expp{-\norm{t}^2/{3}}dt.
\]
Since $\lim\limits_{n\rightarrow\infty}C_n=\infty$, we deduce that 
$\lim_{n\rightarrow\infty}\sup_{\theta\in K} I_{n,2}(\theta)=0$.

\subsubsection{Convergence of $I_{n,1}$}

Since $|f_\theta(t)|\leq 1$, we have:
\begin{equation}
   \label{eq:majof1}
|f_\theta(t)^n - \expp{-\norm{t}^2/2}|
\leq  n |f_\theta(t) - \expp{-\norm{t}^2/(2n)}|
\leq n|h_\theta (n,t)| + ng(n,t),
\end{equation}
where
\[
h_\theta (n,t)= f_\theta(t) - 1 + \frac{\norm{t}^2}{2n}
\quad\text{and}\quad
g(n,t)=\Big|\expp{-\norm{t}^2/(2n)} -1 +
\frac{\norm{t}^2}{2n}\Big|.
\]
Since $0\leq  x+ \expp{-x} -1\leq  x^2/2 $ for $x\geq 0$, we get for
$t\in J_1$:
\begin{equation}
   \label{eq:majog1}
ng(n,t)\leq 
\frac{\norm{t}^4}{8n } \leq  n^ {-1} C_n^ 4 . 
\end{equation}

Since $\E_\theta[Y]=0$ and $\E_\theta\left[\langle t,Y \rangle^2\right]=\norm{t}^2/n$,
see \reff{equ:tY2}, we deduce that:
\[
h_\theta (n,t)=\E_\theta\left[\expp{i\langle t, Y \rangle} -1 + i\langle
  t, Y \rangle + \frac{\langle t,
      Y\rangle^2}{2}\right]. 
\]
Let $L_n=n^{\frac{1}{4}}$. 
We have:
\begin{align*}
|h_{\theta}(n,t)|
&\leq
 \E_\theta\left[\Big|\expp{i\langle t, Y\rangle}-1+i\langle t, Y
    \rangle+ \frac{\langle t,
      Y\rangle^2}{2}\Big|\right]\\ 
&=
 \E_\theta\left[\Big|\expp{i\langle t, Y\rangle}-1+i\langle t, Y
    \rangle+ \frac{\langle t,
      Y\rangle^2}{2}\Big|; \, \norm{X_1-m_\theta}<L_n\right]\\ 
& \hspace{2cm}
+\E_\theta\left[\Big|\expp{i\langle t, Y\rangle}-1+i\langle t, Y
    \rangle+ \frac{\langle t,
      Y\rangle^2}{2}\Big|; \, \norm{X_1-m_\theta}\geq L_n\right]\\ 
&\leq\inv{6}\E_\theta\left[|\langle t,
  Y\rangle|^3;\, |X_1-m_\theta|<L_n\right] 
+\E_\theta\left[\langle t, Y\rangle^2; \, \norm{X_1-m_\theta}\geq
  L_n\right], 
\end{align*}
where   we   used  $|\expp{i\alpha}-1-i\alpha+\frac{\alpha^2}{2}|   \leq
\min(|\alpha|^3/6,  \alpha^2)$   for  $\alpha\in  \R$  for   the  second
inequality.  We have:     
\begin{align*}
\E_\theta[|\langle     t,
Y\rangle|^3;\, \norm{X_1-m_\theta}<L_n]
&=\E_\theta\left[\langle     t,
Y\rangle^2 |\langle t,n^{-1/2}\Sigma_{\theta}^{-1/2}(X_1-m_\theta)
\rangle| ; \, \norm{X_1-m_\theta}<L_n\right]\\ 
&\leq n^{-1/2} \norm{t} \sqrt{c_1}\, L_n    
\E_\theta\left[\langle     t,
Y\rangle^2\right]\\
&= n^{-3/2}\norm{t}^3\sqrt{c_1}\, L_n    , 
\end{align*}
 where   we  used  $c_1$ defined in
\reff{defi:c1}    for the inequality and 
\reff{equ:tY2}  for the
last equality. Hölder's inequality gives:
\[
\E_\theta\left[\langle t, Y\rangle^2; \, \norm{X_1-m_\theta}\geq L_n\right]
\leq   \E_\theta\left[|\langle t, Y\rangle|^3\right]^{2/3}
\P_\theta(\norm{X_1-m_\theta}\geq L_n)^{1/3}.
\]
Using $a_3$ defined  in \reff{defi:a2}, we get:
\[
 \E_\theta\left[|\langle t, Y\rangle|^3\right]
\leq  n^{-3/2} \norm{\Sigma_\theta^{-1/2} t}^ 3 \E_\theta\left[\norm{X_1
    -m_\theta}^3 \right]
 \leq  n^{-3/2} c_1^{3/2} \norm{ t}^ 3
a_3.
\]
Using Tchebychev's inequality and $a_2$ defined  in \reff{defi:a2}, we get:
\[
\P_\theta(\norm{X_1-m_\theta}\geq L_n)
\leq  
\E_\theta\left[\norm{X_1-m_\theta}^2\right] L_n^{-2}\leq  a_2 L_n^{-2}.
\]
This gives:
\[
\E_\theta\left[\langle t, Y\rangle^2; \, \norm{X_1-m_\theta}\geq L_n\right]
\leq  n^{-1} c_1 \norm{t}^2 a_3^{2/3} a_2^{1/3}L_n^{-2/3}.
\]
For $t\in J_1$, that is $\norm{t}\leq  C_n$, we get:
\[
n |h_\theta(n,t)|
\leq \inv{6} n^{-1/4} C_n^3 \sqrt{c_1}+ n^{-1/6} c_1 C_n^2 a_3^{2/3} a_2^{1/3}
.
\]
Using \reff{eq:majof1} and \reff{eq:majog1}, we deduce there exists a
constant $c$ which does not depend on $t$, $\theta$ and $n$ such that
for $t\in J_1$, $\theta\in K$, we have:
\[
|f_\theta(t)^n - \expp{-\norm{t}^2/2}|\leq  c (n^{-1/4} C_n^3+n^{-1/6}
 C_n^2+  n^{-1}C_n^4).
\]
We deduce that for $\theta\in K$:
\[
I_{n,1}(\theta)= \int_{J_1} |f_\theta(t)^n - \expp{-\norm{t}^2/2}|
\leq  c (n^{-1/4} C_n^3+n^{-1/6}
 C_n^2+  n^{-1}C_n^4) 2^d C_n^d.
\]
Recall that  $\lim_{n\rightarrow\infty}n^{-1/(12+6d)}C_n=0$.
This implies  $
\lim_{n\rightarrow\infty}\sup_{\theta\in K}I_{n,1}(\theta)=0$.

\subsection{Strong ratio limit theorem}
\label{sec:rlrw}
Recall Definition \ref{defi:aperiodic} for an aperiodic probability distribution.
Consider an aperiodic distribution $F$ on $\Z^d$.
Let $X$ be a random variable with  distribution $F$.
Recall the function $\varphi(\theta)=\log\E[\expp{\langle\theta,
  X\rangle}]$ defined in \reff{eq:def-fi}  and its conjugate $\psi$
defined in \reff{eq:psi}. 
We state the following strong ratio theorem, which is of interest by
itself. However, in this paper we used the extension of the strong
ratio theorem given in Section \ref{sec:ext}.

\begin{theo}
   \label{theo:neveuth}
Let $F$ be an aperiodic probability distribution on $\Z ^d$. Let $(X_\ell, \ell\in \N^*)$ be independent random variables with the same
 distribution $F$. Let $S_n=\sum_{\ell=1}^n X_\ell$ for $n\in
\N^*$. For all $m\in \N$ and $b\in \Z^d$, we have:
\begin{equation}
   \label{eq:neveu1}
\lim_{n\rightarrow\infty}\frac{\P(S_{n-m} =s_n-b)}{\P(S_n=s_n)}=1,
\end{equation}
where the sequence $(s_n, n\in \N^*)$ of elements of $\Z ^ d$ satisfies
the following conditions:
\begin{itemize}
\item[(a)] $\sup_{n\in \N ^*} |\frac{s_n}{n}|<\infty$,
\item[(b)] $\lim_{n\rightarrow\infty}\psi(\frac{s_n}{n})=0$.
\end{itemize}
\end{theo}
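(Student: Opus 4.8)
The plan is to prove \reff{eq:neveu1} by an exponential change of measure that re-centres the walk at $s_n$, followed by the uniform local limit theorem, Theorem~\ref{Gth2}. Since Theorem~\ref{Gth2} requires $\ci_F$ non-empty, and condition (b) is informative only then, I would first treat the main case $0\in\ci_F$ (i.e. $\varphi$ finite in a neighbourhood of the origin), commenting on the degenerate case at the end. From \reff{eq:P_theta} one has, for every $\theta\in\ci_F$ and every $s\in\Z^d$, the identity $\P_\theta(S_n=s)=\expp{\langle\theta,s\rangle-n\varphi(\theta)}\P(S_n=s)$, hence
\[
\frac{\P(S_{n-m}=s_n-b)}{\P(S_n=s_n)} =\expp{\langle\theta,b\rangle-m\varphi(\theta)}\, \frac{\P_{\theta}(S_{n-m}=s_n-b)}{\P_{\theta}(S_n=s_n)} \qquad(\theta\in\ci_F).
\]
The strategy is to pick $\theta=\theta_n$ so that under $\P_{\theta_n}$ the walk $S_n$ has mean exactly $s_n$ (so the Gaussian factor in \reff{d2} equals $1$), to check $\theta_n\to 0$ (so the prefactor tends to $1$), and to let Theorem~\ref{Gth2} control the remaining ratio.

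First I would show that (a) and (b) force $s_n/n\to\E[X]$, where $X\sim F$. By (a) the sequence $(s_n/n)$ is bounded, so it suffices to identify its limit points. If $x$ is one, then lower semicontinuity of $\psi$ together with $\psi\ge 0$ and (b) give $\psi(x)=0$, i.e. $\varphi(\theta)\ge\langle\theta,x\rangle$ for all $\theta$; with $\varphi(0)=0$ this says the convex function $\theta\mapsto\varphi(\theta)-\langle\theta,x\rangle$ is minimised at $\theta=0$, and since $0\in\ci_F$ makes $\varphi$ differentiable there, stationarity gives $x=\nabla\varphi(0)=m_0=\E[X]$ by \reff{eq:m-s} (consistent with Corollary~\ref{cor:ri-F}). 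Hence $s_n/n\to\E[X]$.

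Next, $\Sigma_0=\nabla^2\varphi(0)$ is positive definite by Lemmas~\ref{lem:L3b} and~\ref{lem:dound-i} applied to $K=\{0\}$, so $\nabla\varphi=m_\bullet$ is a $C^1$ diffeomorphism from an open neighbourhood $U\subset\ci_F$ of $0$ onto a neighbourhood of $\E[X]$; for $n$ large $s_n/n\in\nabla\varphi(U)$, so there is a unique $\theta_n\in U$ with $m_{\theta_n}=s_n/n$, and $\theta_n\to 0$. Fix a compact $K\subset\ci_F$ containing a neighbourhood of $0$, so that $\theta_n\in K$ eventually; by Lemmas~\ref{lem:L3b} and~\ref{lem:dound-i} the eigenvalues of $\Sigma_\theta$ are bounded away from $0$ and from $\infty$ uniformly over $\theta\in K$. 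With this choice $z_n(\theta_n,s_n)=n^{-1/2}\Sigma_{\theta_n}^{-1/2}(s_n-n\,m_{\theta_n})=0$, while $z_{n-m}(\theta_n,s_n-b)=(n-m)^{-1/2}\Sigma_{\theta_n}^{-1/2}(\tfrac m n s_n-b)\to 0$, since $(s_n/n)$ is bounded and $\|\Sigma_{\theta_n}^{-1/2}\|$ stays bounded. Applying \reff{d2} on $K$, with index $n$ and with index $n-m$, then gives
\[
\P_{\theta_n}(S_n=s_n)=\frac{(2\pi)^{-d/2}}{n^{d/2}\,|\Sigma_{\theta_n}|^{1/2}}(1+o(1)), \qquad \P_{\theta_n}(S_{n-m}=s_n-b)=\frac{(2\pi)^{-d/2}}{(n-m)^{d/2}\,|\Sigma_{\theta_n}|^{1/2}}(1+o(1)),
\]
where the $(1+o(1))$ holds because the right side of \reff{d2} stays bounded below (the chosen $z$ being $0$, resp. tending to $0$, keeps $\expp{-\|z\|^2/2}$ away from $0$) while its error term is $o(1)$ uniformly. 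Dividing, $|\Sigma_{\theta_n}|^{1/2}$ cancels and $(n/(n-m))^{d/2}\to 1$, so the ratio of tilted probabilities tends to $1$; combined with $\expp{\langle\theta_n,b\rangle-m\varphi(\theta_n)}\to 1$ ($\theta_n\to 0$, $\varphi$ continuous at $0$ with $\varphi(0)=0$) and the displayed identity of the first paragraph, this proves \reff{eq:neveu1}.

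I expect the main obstacle to be the reduction "(b) $\Rightarrow s_n/n\to\E[X]$" together with the existence of a re-centring tilt staying in a compact subset of $\ci_F$: both rest on $0\in\inter\dom(\varphi)$, which is exactly what makes $\psi$ vanish only at $\E[X]$ and $\nabla\varphi$ a local diffeomorphism near $0$ with $\Sigma_0$ positive definite. When $0$ sits only on the boundary of $\dom(\varphi)$ (exponential moments in some directions only), $\{\psi=0\}$ is strictly larger than $\{\E[X]\}$, condition (b) no longer pins $s_n/n$ to the mean, the centring tilt may fail to exist or to stay in a compact subset of $\ci_F$ (where $\Sigma_\theta$ can blow up), and Theorem~\ref{Gth2} must be complemented by large-deviation, one-big-jump estimates in the uncontrolled directions; this is the genuine difficulty in full generality. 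Everything else (differentiability of $\varphi$ on $\ci_F$, the determinant and moment bounds, the error estimates above) is routine given the results already in place.
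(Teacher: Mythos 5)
Your route is genuinely different from the paper's, and it is correct as far as it goes, but it only proves the theorem under the extra hypothesis $0\in\ci_F$, i.e.\ $\varphi$ finite in a neighbourhood of the origin. That is not assumed in the statement: the sole hypothesis on $F$ is aperiodicity. You acknowledge this in your final paragraph and call it the ``genuine difficulty in full generality,'' but you do not close the gap, and for the paper the gap is essential: the theorem feeds into Lemma \ref{lem:srlt-H'G}, which is applied to a random variable $Y$ built from an offspring distribution satisfying only ($H_1$)--($H_2$) --- a finite mean matrix but no exponential moments --- so the case $0\in\ci_F$ cannot be taken for granted.

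The paper's proof uses a different decomposition that sidesteps the need to tilt the whole walk. Fix $b$ with $\mathfrak{p}:=\P(X_1=b)>0$ and let $N_n=\Card\{\ell\le n: X_\ell=b\}$; exchangeability gives the exact identity $\E[N_n/n\mid S_n=a]=\mathfrak{p}\,\P(S_{n-1}=a-b)/\P(S_n=a)$. Hoeffding's inequality makes $N_n/n$ concentrate around $\mathfrak{p}$ at rate $\expp{-2n\varepsilon^2}$, so the ratio is close to $1$ as soon as $\P(S_n=s_n)\expp{n\psi(s_n/n)}$ decays subexponentially; this one-sided lower bound is Lemma \ref{lem:lowerbd}, and only there does the local limit theorem enter. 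When the support of $F$ is bounded, $\dom(\varphi)=\R^d$ and Lemma \ref{lem:lowerbd} is proved essentially by your tilting idea, maximising $\P_\theta(S_n=s_n)$ over $\theta$ in a compact set supplied by Lemma \ref{lem:full} and invoking Theorem \ref{Gth2}; here your computation and theirs coincide in substance. The step you are missing is the passage from bounded to general aperiodic $F$: the paper truncates $X$ to $X^M=X$ conditioned on $\{|X|\le M\}$, uses $\P(S_n=s_n)\geq(1-\delta_M)^n\P(S_n^M=s_n)$, and then a convex-analysis argument (monotone convergence $\tilde\varphi_M\uparrow\varphi$ of log-Laplaces, hence uniform convergence of $\tilde\psi_M$ to $\psi$ on compacta of $\ri\dom(\psi)$, via results from \cite{c:ca,HL01,auslender2006asymptotic}) controls the error $\Delta_M=\psi-\tilde\psi_M$ uniformly on the relevant compact. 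Because the Neveu identity only requires a crude lower bound, truncation costs nothing; your approach needs two-sided asymptotics for $\P(S_n=s_n)$ itself, and there is no centring tilt to reach $s_n/n$ when $\ci_F$ does not contain $0$. Until you supply a substitute for this truncation step, your proposal proves a weaker statement than Theorem \ref{theo:neveuth}.
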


\begin{rem}
   \label{rem:E[X]}
   Assume that  $X$, with  distribution $F$,  is integrable.   Thanks to
   Corollary \ref{cor:ri-F},  $\E[X]$ belongs  to $\ri  \dom(\psi)$, the
   relative    interior     of    the     domain    of     $\psi$    and
   $\psi(\E[X])=0$.      According     to      Theorem     1.2.3      in
   \cite{auslender2006asymptotic},  the  function $\psi$  is  relatively
   continuous  on $\ri  \dom(\psi)$.  Therefore  if the  sequence $(s_n,
   n\in    \N^*)$     of    elements    of     $\dom(\psi)$    satisfies
   $\lim_{n\rightarrow\infty}s_n/n=\E[X]$, then  (a) and (b)  of Theorem
   \ref{theo:neveuth}  are  satisfied.  Notice   also  that  if  $F$  is
   aperiodic  (as assumed  in Theorem  \ref{theo:neveuth}), then  Lemmas
   \ref{lem:dound-i} and  \ref{lem:ri-F} imply  $\ri \dom(\psi)$  is the
   (non-empty)  interior   of  $\dom(\psi)$  which  is   also  equal  to
   $\co_F=\inter \cv(F)$.
\end{rem}

\subsection{Proof of Theorem \ref{theo:neveuth}}
We adapt  the proof of Neveu  \cite{N63}. We first state a preliminary
lemma.
\begin{lem}
   \label{lem:s/n=0}
Let $F$ be an aperiodic probability distribution on $\Z ^d$. Let $(s_n,
n\in \N^*)$ be elements of $\Z ^ d$ satisfying (a) and (b) of Theorem
\ref{theo:neveuth}. Then, for all $b\in \Z^d$ and $m\in \Z$, we have 
$\lim_{n\rightarrow\infty}\psi(\frac{s_n+b}{n+m})=0$.
\end{lem}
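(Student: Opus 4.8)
The plan is to reduce the statement to the local Lipschitz continuity of the convex function $\psi$ on the interior of its domain. Write $x_n=s_n/n$ and $y_n=(s_n+b)/(n+m)$, defined for $n$ large, and first record the elementary identity
\[
y_n-x_n=\frac{b}{n+m}-\frac{m\,s_n}{n(n+m)}\cdot
\]
By condition (a), $|s_n|/n$ is bounded, so $|y_n-x_n|\to 0$; together with (a) this also shows that $(x_n)$ and $(y_n)$ both lie in one fixed compact subset of $\R^d$ for $n$ large.

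Next I would locate the cluster points of $(x_n)$. Let $E$ be the set of limit points of the bounded sequence $(x_n)$; it is non-empty and compact. For $x^\ast\in E$, take $x_{n_k}\to x^\ast$; lower-semicontinuity of $\psi$, condition (b) and $\psi\geq 0$ give $0\leq\psi(x^\ast)\leq\liminf_k\psi(x_{n_k})=0$, hence $\psi(x^\ast)=0$. By Lemma \ref{lem:ri-F} this forces $x^\ast\in\ri\dom(\psi)$, and since $F$ is aperiodic, $\ri\dom(\psi)$ coincides with the open set $\inter\dom(\psi)=\co_F$ (see Remark \ref{rem:E[X]}). Thus $E$ is a compact subset of the open set $\co_F$, so it admits a compact neighbourhood $C$ with $E\subset C\subset\co_F$; since $\dist(x_n,E)\to 0$ (a bounded sequence all of whose cluster points lie in the closed set $E$) and $|y_n-x_n|\to 0$, one checks that $x_n\in C$ and $y_n\in C$ for all $n$ large enough.

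Finally, $\psi$ is finite and convex on the open convex set $\co_F=\inter\dom(\psi)$, hence locally Lipschitz there (Theorem 10.4 in \cite{c:ca}), so there is $L<\infty$ with $|\psi(u)-\psi(v)|\leq L|u-v|$ for all $u,v\in C$. Then $|\psi(y_n)-\psi(x_n)|\leq L|y_n-x_n|\to 0$, and combined with $\psi(x_n)\to 0$ this yields $\psi((s_n+b)/(n+m))=\psi(y_n)\to 0$, as claimed. The main obstacle, and the only step that is not routine, is the middle one: one must upgrade the hypothesis $\psi(x_n)\to 0$ to the statement that the $x_n$ (and the perturbed points $y_n$) eventually stay inside a \emph{fixed compact subset} of $\inter\dom(\psi)$. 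This is precisely where Lemma \ref{lem:ri-F} (namely that $\psi$ vanishes only on the relative interior of its domain) and aperiodicity (which makes the relative interior equal to the interior) are essential; without them $\psi$ could fail to be continuous at the relevant cluster points.
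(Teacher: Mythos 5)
Your proof is correct and takes essentially the same route as the paper's: you identify the cluster points of $s_n/n$, show via lower-semicontinuity and (b) that $\psi$ vanishes there, invoke the second part of Lemma~\ref{lem:ri-F} together with aperiodicity to place these points in $\inter\dom(\psi)$, and then use continuity of $\psi$ on that open set. The only difference is that you spell out the compactness/local-Lipschitz step (a bounded sequence whose cluster points all lie in an open set eventually stays in a compact subset of it), which the paper compresses into the phrase ``the continuity of $\psi$ on the interior of its domain.''
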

\begin{proof}
  Assume that (a) and (b) of Theorem \ref{theo:neveuth} hold. Let $x$ be
  a limit  of a converging  sub-sequence of $(s_n/n, n\in  \N^*)$. Since
  $\psi$  is   lower-semicontinuous  and   non-negative,  we  deduce
  from (b)  that
  $\psi(x)=0$.    Thus,  the   possible  limits   of  sub-sequences   of
  $((s_n+b)/(n+m),  \, n+m\geq  1)$,  which are  also  the the  possible
  limits  of sub-sequences  of $(s_n/n,  \,  n\in \N^*)$,  are zeros  of
  $\psi$. Then, using the second part of Lemma \ref{lem:ri-F} and the
  continuity of $\psi$ 
  on     the  interior   of   its   domain,  we   deduce   that
  $\lim_{n\rightarrow\infty}\psi(\frac{s_n+b}{n+m})=0$.
\end{proof}

Since $F$  is aperiodic,  using elementary arithmetic  consideration and
Lemma \ref{lem:s/n=0}, we see it is enough to prove \reff{eq:neveu1} for
$m=1$ and $b\in \Z^d$ satisfying $\mathfrak{p}:=\P(X_1=b) >0$.

We set  $N_n=\Card (\{\ell\leq n;
X_\ell=b\})$. Since for $a\in\Z^d$ the conditional probability
$\P(X_\ell=b|S_n=a)$ does not depend on $\ell$ (when $1\leq \ell\leq
n$), we get:
\[
\E\left[\frac{N_n}{n}\, \Big |\, S_n=a\right]
=\P(X_n=b|S_n=a)
= \mathfrak{p} \frac{\P(S_{n-1} =a-b)}{\P(S_n=a)}\cdot
\]
For $\varepsilon>0$, we have:
\begin{equation}
   \label{eq:majo-ratio}
\Big|\frac{\P(S_{n-1}=a-b)}{\P(S_n=a)}-1\Big|
=\Big|\frac{\E\left[\frac{N_n}{n}; S_n=a\right]}{\mathfrak{p}\P(S_n=a)}-1\Big|
\leq\frac{\E[|\frac{N_n}{n}-\mathfrak{p}|;S_n=a]}{\mathfrak{p}\P(S_n=a)}
\leq \frac{\varepsilon}{\mathfrak{p}}+\frac{R_n(a)}{\mathfrak{p}},
\end{equation}
with 
\[
R_n(a)=
\frac{\P(|\frac{N_n}{n}-\mathfrak{p}|>\varepsilon)}
{\P(S_n=a)}\cdot
\]
Thus, the proof will be complete as soon as we prove that for all
$\varepsilon>0$, $\lim_{n\rightarrow\infty }R_n(s_n)=0$.

By Hoeffding's inequality, see Theorem 1 in \cite{H63}, since
$N_n$ is binomial with parameter
$(n,\mathfrak{p})$, we get:
\begin{equation}
   \label{eq:hoeffding}
\P\left(\val{\frac{N_n}{n}-\mathfrak{p}}>\varepsilon\right)\leq  2
\expp{ -2 n\varepsilon^2}. 
\end{equation}

We give a lower bound of $\P(S_n=s_n)$ in the next lemma, whose proof is
postponed to the end of this section.
\begin{lem}
   \label{lem:lowerbd}
Let $F$ be an aperiodic probability distribution on $\Z ^d$. Let
$(X_\ell, \ell\in \N^*)$ be independent random variables with the
same distribution $F$. Let $S_n=\sum_{\ell=1}^n X_\ell$ for $n\in
\N^*$. Then for $0<\eta<1$, $K_0$ compact subset of $\co_F$, $(s_n, n\in
\N^*)$ a sequence of elements of $\Z^d$ such that $s_n/n\in K_0$, there
exists some $n_0\geq 1$ such that for  $n\geq n_0$ we have:
\[
\P(S_{n} =s_n)\expp{n\psi(s_n/n)}\geq (1-\eta)^n.
\]
\end{lem}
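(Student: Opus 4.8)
The plan is to adapt Neveu's argument by first reducing to a step distribution with bounded support --- for which exponential tilting together with the uniform local limit theorem of Theorem \ref{Gth2} is available --- and then to remove the truncation, controlling the resulting loss uniformly over $K_0$. This detour is needed because $F$ itself may have no exponential moments at all ($\ci_F$ may be empty, so no tilting of $F$ exists), whereas every truncation of $F$ does.

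Fix $\eta\in(0,1)$ and the compact set $K_0\subset\co_F$. Since $F$ is aperiodic, Lemma \ref{lem:dound-i} gives $\co_F=\inter\cv(F)\neq\emptyset$ and $\aff(F)=\R^d$, so by Lemma \ref{lem:ri-F} we have $\co_F=\ri(F)=\ri\dom(\psi)$; being convex and finite there, $\psi$ is continuous on the open set $\co_F$ (cf.\ Remark \ref{rem:E[X]}). For $R>0$ large let $F_R$ be the law of $X_1$ conditioned on $\{\val{X_1}\le R\}$, and attach to it the quantities $\varphi_R,\psi_R,\Sigma^R_\theta,\co_{F_R},\ci_{F_R}$ of Section \ref{sec:prel}; write $\P^R,\P^R_\theta$ for probabilities and tilted probabilities relative to $F_R$, and $S^R_n$ for a sum of $n$ i.i.d.\ copies of $F_R$. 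Since a finite subset of $\supp(F)-\supp(F)$ already generates $\Z^d$, $F_R$ is aperiodic for $R$ large; since $\cv(F_R)$ increases to $\cv(F)$, the open sets $\co_{F_R}$ increase to $\co_F$, so $K_0\subset\co_{F_R}$ for all $R$ large. As $F_R$ has bounded support, $\varphi_R$ is finite and smooth on $\R^d=\ci_{F_R}$ and $\co_{F_R}$ is bounded, so by Lemma \ref{lem:full} the map $\theta\mapsto\nabla\varphi_R(\theta)$ is a homeomorphism of $\R^d$ onto $\co_{F_R}$; in particular $K_R:=(\nabla\varphi_R)^{-1}(K_0)$ is a compact subset of $\ci_{F_R}$, and $\sup_{\theta\in K_R}\val{\Sigma^R_\theta}<\infty$ by Lemma \ref{lem:L3b}.

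Next I would carry out the tilting estimate for the truncated walk. Assume $x_n:=s_n/n\in K_0$ and set $\theta_n=(\nabla\varphi_R)^{-1}(x_n)\in K_R$. Since $\theta\mapsto\langle\theta,x_n\rangle-\varphi_R(\theta)$ is concave with gradient $x_n-\nabla\varphi_R(\theta)$ vanishing at $\theta_n$, its maximum $\psi_R(x_n)$ is attained at $\theta_n$ and equals $\langle\theta_n,x_n\rangle-\varphi_R(\theta_n)$. By the definition \reff{eq:P_theta} of the tilted law,
\[
\P^R(S^R_n=s_n)=\expp{\,n\varphi_R(\theta_n)-\langle\theta_n,s_n\rangle\,}\,\P^R_{\theta_n}(S^R_n=s_n)=\expp{-n\psi_R(x_n)}\,\P^R_{\theta_n}(S^R_n=s_n).
\]
Under $\P^R_{\theta_n}$ the walk $S^R_n$ has mean $n\nabla\varphi_R(\theta_n)=s_n$, so the centering vector $z_n(\theta_n,s_n)$ of Theorem \ref{Gth2} is $0$. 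Applying Theorem \ref{Gth2} to $F_R$ with the compact set $K_R\subset\ci_{F_R}$ and using $\sup_{\theta\in K_R}\val{\Sigma^R_\theta}<\infty$, we obtain a constant $c_R>0$ with $\P^R_{\theta_n}(S^R_n=s_n)\ge c_R\,n^{-d/2}$ for all $n$ large, uniformly over all sequences with $s_n/n\in K_0$. Hence there is $n_1=n_1(R,\eta)$ so that $\P^R(S^R_n=s_n)\ge(1-\eta/2)^n\,\expp{-n\psi_R(x_n)}$ for $n\ge n_1$.

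Finally, to remove the truncation I would restrict to the event that all $n$ steps lie in $\{\val{x}\le R\}$, which gives $\P(S_n=s_n)\ge\P(\val{X_1}\le R)^n\,\P^R(S^R_n=s_n)$, hence for $n\ge n_1$ and $s_n/n\in K_0$,
\[
\P(S_n=s_n)\,\expp{n\psi(x_n)}\ \ge\ (1-\eta/2)^n\,\expp{\,n\log\P(\val{X_1}\le R)\,}\,\expp{\,n\bigl(\psi(x_n)-\psi_R(x_n)\bigr)\,}.
\]
It then suffices to choose $R$ so large that both $-\log\P(\val{X_1}\le R)$ and $\sup_{x\in K_0}\bigl(\psi_R(x)-\psi(x)\bigr)$ are less than $\tfrac12\bigl(\log(1-\eta/2)-\log(1-\eta)\bigr)$, and to take $n_0=n_1(R,\eta)$. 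The first tends to $0$ as $R\to\infty$. For the second --- the crux of the argument --- set $\widetilde\varphi_R(\theta)=\log\E[\expp{\langle\theta,X\rangle};\,\val{X}\le R]=\varphi_R(\theta)+\log\P(\val{X}\le R)\le\varphi(\theta)$; then $\widetilde\varphi_R$ increases to $\varphi$ pointwise on $\R^d$, so by standard convex analysis (epi-convergence is preserved by Fenchel conjugation, and the conjugates of an increasing sequence decrease) the conjugates $\widetilde\psi_R=\psi_R-\log\P(\val{X}\le R)\ge\psi$ decrease to $\psi$ at every point of $\ri\dom(\psi)\supset K_0$. Each $\widetilde\psi_R$ is convex and finite, hence continuous, on the open set $\co_{F_R}\supset K_0$, so Dini's theorem yields $\widetilde\psi_R\to\psi$ uniformly on $K_0$; since $\psi+\log\P(\val{X}\le R)\le\psi_R\le\widetilde\psi_R$ and $\log\P(\val{X}\le R)\to0$, this also gives $\psi_R\to\psi$ uniformly on $K_0$, and in particular $\sup_{K_0}(\psi_R-\psi)\to0$. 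The main obstacle is precisely this last step: establishing the uniform convergence of the Legendre transforms $\psi_R$ of the truncations to that of $F$ on the compact set $K_0\subset\co_F$, with no moment assumption on $F$ itself.
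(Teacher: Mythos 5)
Your proposal is correct and follows essentially the same route as the paper's proof: truncate $F$ to a bounded-support law, apply exponential tilting together with the uniform Gnedenko theorem (Theorem~\ref{Gth2}) to get the sharp lower bound for the truncated walk, bound $\P(S_n=s_n)$ from below by the truncated probability at a geometric cost, and finish by showing that the Cram\'er transforms $\psi_R$ of the truncations converge to $\psi$ uniformly on the compact $K_0\subset\co_F$. The only cosmetic difference is in the last step: the paper invokes the theorem that a pointwise-convergent sequence of finite convex functions converges uniformly on compacts of an open set (plus the Fenchel-closure identifications from \cite{HL01} and \cite{auslender2006asymptotic}), whereas you identify the pointwise limit via epi-convergence of conjugates and then close with Dini's theorem using the monotonicity of $\tilde\psi_R$; both routes rest on the same convex-analysis facts.
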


Using
\reff{eq:hoeffding} and Lemma \ref{lem:lowerbd} with
$1-\eta=\expp{-\varepsilon^2}$, we get: 
\[
R_n(s_n)=\frac{\P\left(\val{\frac{N_n}{n}-\mathfrak{p}}
    >\varepsilon\right)}{\P(S_n=s_n)}     
\leq  2 \expp{-n\varepsilon^2 + n\psi(s_n/n)}.
\]
Since $\lim\limits_{n\rightarrow\infty }  \psi(s_n/n)= 0$ by assumption,
we get the result.\qed

\begin{proof}[Proof of Lemma \ref{lem:lowerbd}]
Since $F$ is aperiodic, Lemma \ref{lem:dound-i} implies that $\co_F$ is
non-empty.

We first assume  that the support of $F$ is  bounded.  In particular the
domain  of  $\varphi$ defined  by  \reff{eq:def-fi}  is $\R^d$.   Recall
notation  \reff{eq:P_theta}  as   well  as  $m_\theta=\E_\theta[X]$  and
$\Sigma_\theta=\Cov_{\theta}(X, X)$.  Let $ K_0$ be a
compact  subset of  $\co_F$.  According to  Lemma \ref{lem:full},  there
exists  a  compact set  $K\subset \R^d$  such  that  $K_0\subset  \{m_\theta,
\theta\in K\}$.  According  to Theorem \ref{Gth2}, we have  that for all
$\varepsilon>0$, there exists $n_0$ such that for all $n\geq n_0$:
\[
\sup_{\theta\in K}\sup_{s\in
  \Z^d}\Big|n^{d/2}|\Sigma_\theta|^{1/2}\P_\theta(S_n=s)-
(2\pi)^{-d/2}
\expp{-u_n(\theta,s)}
\Big|<\varepsilon,
\]
with 
\[
u_n(\theta,s)=\frac{\langle{s}-n{m}_\theta,
\Sigma_\theta^{-1}({s}-n{m}_\theta)\rangle}{2n}\cdot
\]
So we get that for all $n\geq n_0$, $\theta\in K$:
\begin{align*}
\P_{\theta}(S_n={s}_n)
& \geq(2\pi n)^{-d/2}|\Sigma_\theta|^{-1/2}
\expp{-u_n(\theta, s_n)}
-n^{-d/2}|\Sigma_\theta|^{-1/2}
\varepsilon\\
&\geq(2\pi n)^{-d/2}\left(\sup_{q\in K} |\Sigma_q|\right)^{-1/2}
\expp{-u_n(\theta, s_n)}
-n^{-d/2}\left(\inf_{q\in K} |\Sigma_q|\right) ^{-1/2}
\varepsilon.
\end{align*}
We deduce that for all $n\geq n_0$:
\[
\sup_{\theta\in K} \P_{\theta}(S_n={s}_n)
\geq (2\pi n)^{-d/2}\left(\sup_{q\in K} |\Sigma_q|\right)^{-1/2}
\expp{-\inf_{\theta\in K} u_n(\theta, s_n)}
-n^{-d/2}\left(\inf_{q\in K} |\Sigma_q|\right) ^{-1/2}
\varepsilon.
\]
Since $s_n/n$  belongs to  $\{m_\theta; \, \theta\in  K\}$, we  get that
$\inf_{\theta\in  K} u_n(\theta, s_n)=0$.   Thanks to  (\ref{eq:L3bound}) and
Lemma \ref{lem:dound-i}, we can also choose $\varepsilon>0$ 
and     $\delta>0$   both  small     enough      so     that     $     (2\pi
)^{-d/2}\left(\sup_{q\in   K}   |\Sigma_q|\right)^{-1/2}
-\left(\inf_{q\in       K}       |\Sigma_q|\right)       ^{-1/2}
\varepsilon>\delta$. Then we deduce that for all $n\geq n_0$:
\[
\sup_{\theta\in \R^d} \P_{\theta}(S_n={s}_n)
\geq
\sup_{\theta\in K} \P_{\theta}(S_n={s}_n)
\geq  n^{-d/2} \delta>0.
\]
Using  \reff{eq:psi}, we get:
\[
\sup_{\theta\in \R ^d} \P_{\theta}(S_n={s}_n)
= \sup_{\theta\in \R ^d} \P(S_n=s_n) \expp{ \langle\theta, s_n \rangle -
  n\varphi(\theta)
}
= \P(S_n=s_n) \expp{n\psi(s_n/n)}.
\]
This gives, for some $\delta>0$, for all $n\geq n_0$:
\begin{equation}
   \label{eq:majo-bdd}
\P(S_n=s_n) \expp{n\psi(s_n/n)} \geq  \delta  n^{-d/2}>0.
\end{equation}
This gives Lemma \ref{lem:lowerbd} when  the support  of $F$ is
bounded.\\

Let $F$ be  a general aperiodic probability distribution  on $\Z^d$, and
$X$  a  random  variable  with  distribution $F$.   Let  $M>0$  so  that
$\delta_M=\P(|X|>M)<1$. Let $X^M$ be distributed as $X$ conditionally on
$\{|X|\leq       M\}$.          Let
$(X_\ell^M, \ell\in \N)$ be  independent random variables distributed as
$X^M$, and set $S_n^M=\sum_{\ell=1}^n X_\ell^M$. We have:
\[
\P(S_n^M={s}_n)
=\frac{\P(S_n={s}_n,\, |X_\ell|\leq M \text{ for } 1\leq \ell\leq
  n)}{\P(|X|\leq M)^n}
\leq
\frac{\P(S_n={s}_n)}{(1-\delta_M)^n}\cdot
\]
Let $F_M$ be the probability distribution of $X^M$ and $\varphi_M$
defined by \reff{eq:def-fi} with $F$ replaced by $F_M$ and $\psi_M$
defined by \reff{eq:psi} with $\varphi$ replaced by $\varphi_M$.
Since $F$ is aperiodic, we get that $F_M$ is aperiodic for $M$ large
enough. We get:
\[
\P(S_n={s}_n)\expp{n\psi(s_n/n)}
\geq  \P(S_n^M={s}_n)\expp{n\psi(s_n/n)} (1-\delta_M)^n
= \P(S_n^M={s}_n)\expp{n\psi_M(s_n/n)} \expp{ n \Delta_M(s_n/n)},
\]
where we define $\Delta_M(s)=\psi(s) - \tilde \psi_M(s)$ and $ \tilde \psi _M(x) =
\sup_{\theta\in   \R   ^d}   \left(  \langle\theta,   x\rangle   -\tilde
  \varphi_M(\theta)   \right)$   with    $\tilde   \varphi_M(\theta)   =
\log\left(\E\left[\expp{\langle\theta,          X\rangle}\ind_{\{|X|\leq
      M\}}\right]\right) $ so that $\tilde \psi_M(x)=\psi_M(x) - \log(1-\delta_M)$.

Notice  that   the  sequence  of  continuous   finite  convex  functions
$(\tilde  \varphi_M,   M\in  \N^*)$  is  non-decreasing   and  converges
point-wise to  the convex function  $\varphi$ (which is  not identically
$+\infty $  as $\varphi(0)=0$).  By definition,  the sequence  of convex
functions   $(\tilde  \psi_M,   M\in   \N^*)$   is  non-increasing   and
$\tilde \psi_M\geq \psi$.  Therefore the sequence converges to a function
say $\tilde \psi$  such that $\tilde \psi\geq \psi$.   Thanks to Theorem
B.3.1.4 in \cite{HL01} or Theorem  II.10.8 of \cite{c:ca}, $\tilde \psi$
is convex  and $(\tilde \psi_M,  M\in \N^*)$ converges to  $\tilde \psi$
uniformly on  any compact  subset of  $\ri \dom(\tilde  \psi)$.  Theorem
E.2.4.4 in \cite{HL01} gives that  the closure of $\tilde \psi$ (defined
in Definition  B.1.2.4 in  \cite{HL01}) is equal  to $\psi$.   Thanks to
Proposition  1.2.5   in  \cite{auslender2006asymptotic},  we   get  that
$\ri  \dom(\tilde  \psi)=\ri  \dom(\psi)$  and   on  this  set  we  have
$\tilde  \psi=\psi$.  Since  $\ri \dom(\psi)=\ri(F)=\co_F$,  see  Lemmas
\ref{lem:ri-F}    and     \ref{lem:dound-i},    this     implies    that
$\lim_{M\rightarrow+\infty } \Delta_M=0$ uniformly on any compact subset
of $\co_F$.

Notice that $\Delta_M\leq 0$. Therefore for any $\gamma>0$, $K_0$
compact subset of $\co_F$, there exists $M_0$ such that for $M\geq M_0$,
$0\geq  \Delta_M\geq -\gamma$ on $K_0$.
We deduce
from \reff{eq:majo-bdd} with $S_n$ and $\psi$ replaced by $S_n^M$ and
$\psi_M$ that for some $\delta>0$ and $\gamma>0$, there exists $n_0\geq
1$ such that for all $n\geq n_0$:
\[
\P(S_n={s}_n)\expp{n\psi(s_n/n)}
\geq \delta  n^{-d/2} \expp{-\gamma n}.
\]
This completes the proof. 
\end{proof}

\subsection{An extension of  Theorem  \ref{theo:neveuth}}
\label{sec:ext}

We shall need the following extension of  Theorem  \ref{theo:neveuth}. 

\begin{lem}
\label{lem:srlt-H'G}
Let $F$  be a probability distribution  on $\N^{d'}$ which is  aperiodic on
$\Z^{d'}$.  Let $(Y_n,  n\in  \N ^*)$  be  independent random  variables
distributed according to  $F$ and set $W_n=\sum_{\ell=1}  ^n Y_\ell$ for
$n\in \N^*$. Assume  that $\E[|Y_1|]<+\infty $. Let $G$ and  $H'$ be two
random variables in $\mathbb{N}$  and $\mathbb{N}^{d'}$ respectively and
independent of $(Y_n, n\in \N ^*)$ such that $\P(G=0)<1$ and a.s. $G\leq
|H'|^{c}$ for  some $c\geq 1$.  Let  $(w_n, n\in \N^*)$ be  a sequence of
$\N^{d'}$  such that  $\lim_{n\rightarrow+\infty }  w_n/n=\E[Y_1]$.  Then
for any given $\ell\in \N$ and $b$ $\in\mathbb{N}^{d'}$, we have:
\begin{equation}
   \label{eq:neveuGH}
\lim_{n\rightarrow\infty}
\frac{\E [G;\, H'+W_{n-\ell}=w_n-b]}{
\E [G;\, H'+W_{n}=w_n]}=1.
\end{equation}
\end{lem}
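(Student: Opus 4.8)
The plan is to carry the pair $(G,H')$, which is independent of the walk $(Y_\ell,\ell\in\N^*)$, through Neveu's counting argument, i.e.\ through the proof of Theorem \ref{theo:neveuth} rather than its statement. Write $\mathbf a=\E[Y_1]$. Since $F$ is aperiodic and $\E[|Y_1|]<+\infty$, Corollary \ref{cor:ri-F} together with Lemmas \ref{lem:ri-F} and \ref{lem:dound-i} give $\mathbf a\in\co_F=\inter\dom(\psi)$ and $\psi(\mathbf a)=0$; and, $\co_F$ being a non-empty open subset of $[0,\infty)^{d'}$, we have $\mathbf a>0$. By Remark \ref{rem:E[X]}, every sequence $(v_n)$ in $\Z^{d'}$ with $v_n/n\to\mathbf a$ satisfies (a) and (b) of Theorem \ref{theo:neveuth}, and $\psi(v_n/n)\to 0$. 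First I would perform the very same reduction as at the beginning of the proof of Theorem \ref{theo:neveuth}: using the aperiodicity of $F$ and Lemma \ref{lem:s/n=0}, and replacing every plain transition probability $\P(S_N=s)$ by its decorated analogue $\E[G;\,H'+W_N=s]$, one sees it suffices to prove \reff{eq:neveuGH} when $\ell=1$ and $b$ is such that $\mathfrak p:=\P(Y_1=b)>0$, for an arbitrary sequence $(w_n)$ in $\N^{d'}$ with $w_n/n\to\mathbf a$. This transfer is licit because $(G,H')$ is independent of the walk and, as checked below, the decorated probabilities $\E[G;H'+W_m=v]$ are finite and, for $m$ large, positive.

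So fix such a $b$ and such a sequence $(w_n)$, and set $N_n=\Card\{j\in[n]:\,Y_j=b\}$, a binomial $(n,\mathfrak p)$ random variable. Conditioning on $Y_j$ for each $j\in[n]$, using that $(G,H')$ is independent of $(Y_\ell,\ell\in\N^*)$, and that on $\{Y_j=b\}$ one has $\{H'+W_n=w_n\}=\{H'+\sum_{i\neq j}Y_i=w_n-b\}$, I obtain Neveu's identity
\[
\E[G\,N_n;\,H'+W_n=w_n]=n\,\mathfrak p\,\E[G;\,H'+W_{n-1}=w_n-b].
\]
Pick $h^*\in\N^{d'}$ with $\mu^*:=\E[G;\,H'=h^*]>0$, which is possible since $\P(G>0)>0$. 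Using $G\le|H'|^c\le|w_n|^c$ on $\{H'+W_n=w_n\}$ (because $0\le H'\le w_n$ there) and Lemma \ref{lem:lowerbd} (applied to a compact $K_0\subset\co_F$ containing $(w_n-h^*)/n$ for $n$ large), one gets for $n$ large
\[
0<\mu^*\,\P(W_n=w_n-h^*)\le\E[G;\,H'+W_n=w_n]\le|w_n|^c<+\infty ,
\]
so the decorated ratio is well defined. Dividing the identity by $n\mathfrak p\,\E[G;H'+W_n=w_n]$ and comparing with $1$, the claim reduces, after taking absolute values and using the triangle inequality, to showing that
\[
\Delta_n:=\frac{\E[G\,|N_n/n-\mathfrak p|;\,H'+W_n=w_n]}{\E[G;\,H'+W_n=w_n]}\longrightarrow 0.
\]

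To estimate $\Delta_n$, fix $\varepsilon>0$ and bound $|N_n/n-\mathfrak p|\le\varepsilon+\ind_{\{|N_n/n-\mathfrak p|>\varepsilon\}}$, so $\Delta_n\le\varepsilon+\Delta'_n$ with $\Delta'_n$ the contribution of the indicator. Since $G\le|w_n|^c$ on the event, $(G,H')$ is independent of the walk, and $N_n$ is binomial, Hoeffding's inequality \reff{eq:hoeffding} yields
\[
\E[G\,\ind_{\{|N_n/n-\mathfrak p|>\varepsilon\}};\,H'+W_n=w_n]\le |w_n|^c\,\P(|N_n/n-\mathfrak p|>\varepsilon)\le 2\,|w_n|^c\,\expp{-2n\varepsilon^2}.
\]
On the other hand Lemma \ref{lem:lowerbd} gives, for any $\eta\in(0,1)$ and $n$ large, $\E[G;H'+W_n=w_n]\ge\mu^*(1-\eta)^n\expp{-n\psi((w_n-h^*)/n)}$. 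Since $|w_n|\le 2|\mathbf a|\,n$ for $n$ large and $\psi((w_n-h^*)/n)\to\psi(\mathbf a)=0$, choosing $\eta$ so small that $-\ln(1-\eta)<\varepsilon^2$ makes $\expp{-n\,(2\varepsilon^2+\ln(1-\eta)-\psi((w_n-h^*)/n))}$ eventually dominate the polynomial $n^c$, hence $\Delta'_n\to0$. Thus $\limsup_n\Delta_n\le\varepsilon$, and letting $\varepsilon\downarrow0$ settles the case $\ell=1$, and, via the reduction, the lemma.

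The hard part is exactly the convergence $\Delta_n\to0$; this is where $\P(G=0)<1$ and $G\le|H'|^c$ (with \emph{no} moment assumption on $H'$) are used. The crucial observation is that on $\{H'+W_n=w_n\}$ one has $0\le H'\le w_n$, so $G\le|w_n|^c$ there, which is only \emph{polynomial} in $n$ and is therefore absorbed by the exponential concentration of $N_n/n$ around $\mathfrak p$, provided the denominator $\E[G;H'+W_n=w_n]$ does not decay too fast — and the needed sub-exponential lower bound is Lemma \ref{lem:lowerbd} combined with $\psi((w_n-h^*)/n)\to0$. A secondary technical point is to confirm that the reduction to $\ell=1$ (and $b$ with $\mathfrak p>0$) carries over verbatim to the decorated ratios, using only the independence of $(G,H')$ from the walk and the finiteness and eventual positivity of $\E[G;H'+W_m=v]$ established above.
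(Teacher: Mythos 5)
Your proposal is correct and follows essentially the same route as the paper: reduce to $\ell=1$ and $b$ with $\mathfrak{p}=\P(Y_1=b)>0$, run Neveu's counting identity with $N_n$ through the decorated probabilities using independence of $(G,H')$, bound $G\le|w_n|^c$ on the event $\{H'+W_n=w_n\}$, and beat the polynomial factor by Hoeffding's exponential concentration against the sub-exponential lower bound from Lemma~\ref{lem:lowerbd} combined with $\psi((w_n-h^*)/n)\to0$. The only cosmetic differences (picking $h^*$ with $\E[G;H'=h^*]>0$ rather than $(g,h)$ with $\P(G=g,H'=h)>0$, and quoting the polynomial bound $|w_n|^c\lesssim n^c$ rather than $|w_n|^c\le\expp{\varepsilon'n}$) are immaterial.
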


\begin{proof}
  Since $F$ is aperiodic and  by elementary arithmetic consideration, it
  is enough to  prove \reff{eq:neveuGH} for $\ell=1$  and $b\in \N^{d'}$
  satisfying  $\mathfrak{p}=\P(Y_1=b) >0$.   Let $\varepsilon>0$.  Using
  similar arguments as in \reff{eq:majo-ratio}, we get:
\[
\Big|
\frac{\E[G;\, H'+W_{n-1 }=w_n-b]}{
\E[G;\, H'+W_{n}=w_n]}-1
\Big|
\leq  
 \frac{\varepsilon}{\mathfrak{p}}+\frac{R_n}{\mathfrak{p}},
\]
 and 
\[
R_n=
\frac{\E\left[G;\, |\frac{N_{n}}{n}-\mathfrak{p}|>\varepsilon,\, 
H'+W_{n}=w_n\right]}
{\E[G; H'+W_n=w_n]},
\]
with $N_n=\sum_{\ell=1}^n  \ind_{\{Y_\ell=b\}}$. 
Choose $g\in\N^*$ and $h\in\N^{d'}$ such that $q=\P(G=g, H'=h)>0$.
We have:
\[
R_n\leq\frac{|w_n|^c\,  
\P\left(\Big|\frac{N_{n}}{n}-\mathfrak{p}\Big|>\varepsilon\right)}
{gq\P(W_n=w_n-h)}
\leq\frac{|w_n|^c\, 2\expp{-2n\varepsilon^2}}
{gq\P(W_n=w_n-h)}
,
\]
where we used  $G\leq |H'|^c$ a.s. and that $H'+W_n=w_n$ implies $H'\leq w_n$
for the first inequality, and 
 inequality \reff{eq:hoeffding} in the Appendix for the second. 
Notice that for all $\varepsilon'>0$ we have $|w_n|^c\leq
\exp(\varepsilon' n)$ for $n$ large enough. 

Then use Lemma \ref{lem:lowerbd} and Remark \ref{rem:E[X]} to conclude
that if $\lim_{n\rightarrow+\infty } w_n/n=\E[Y_1]$, then
$\lim_{n\rightarrow+\infty } R_n=0$. Since $\varepsilon>0$ is arbitrary,
we get $\lim_{n\rightarrow+\infty } \Big|
\frac{\E[G;\, H'+W_{n-1 }=w_n-b]}{
\E[G;\, H'+W_{n}=w_n]}-1
\Big|=0$, which gives 
the result. 
\end{proof}

\subsection*{Acknowledgements}
The authors would like to thank Jean-Philippe Chancelier for pointing
out the  references on convex analysis and his valuable advice as well
as the two anonymous referees for their  comments and suggestions.
H. Guo would like to express her gratitude to J.-F. Delmas for his help
during her stay at CERMICS. 

The research has also been supported by the ANR-14-CE25-0014 (ANR GRAAL). 

\bibliographystyle{abbrv}
\bibliography{bibmulti}

\begin{thebibliography}{10}

\bibitem{AD14b}
R.~Abraham and J.-F. Delmas.
\newblock Local limits of conditioned {G}alton-{W}atson trees: the condensation
  case.
\newblock {\em Elec. J. of Probab.}, 19(56):1--29, 2014.

\bibitem{AD14a}
R.~Abraham and J.-F. Delmas.
\newblock Local limits of conditioned {G}alton-{W}atson trees: the infinite
  spine case.
\newblock {\em Elec. J. of Probab.}, 19(2):1--19, 2014.

\bibitem{AN72}
K.~B. Athreya and P.~E. Ney.
\newblock {\em Branching processes}.
\newblock Springer-Verlag, 1972.

\bibitem{auslender2006asymptotic}
A.~Auslender and M.~Teboulle.
\newblock {\em Asymptotic cones and functions in optimization and variational
  inequalities}.
\newblock Springer Science \& Business Media, 2006.

\bibitem{CL13}
L.~Chaumont and R.~Liu.
\newblock Coding multitype forests: application to the law of the total
  population of branching forests.
\newblock {\em Transactions of the American Mathematical Society},
  368:2723--2747, 2016.

\bibitem{dh}
J.-F. Delmas and O.~Hénard.
\newblock A {W}illiams decomposition for spatially dependent superprocesses.
\newblock {\em Elec. J. of Probab.}, 18(37):1--43, 2013.

\bibitem{G48}
B.~V. Gnedenko.
\newblock On a local limit theorem of the theory of probability.
\newblock {\em Uspekhi Mat. Nauk}, 3(3):187--194, 1948.

\bibitem{GK54}
B.~V. Gnedenko and A.~N. Kolmogorov.
\newblock {\em Limit distributions for sums of independent random variables}.
\newblock English translation, Addison-Wesley, Cambridge, Mass, 1954.

\bibitem{h:cgwtmod}
X.~He.
\newblock Conditioning {G}alton-{W}atson trees on large maximal out-degree.
\newblock {\em J. of Theor. Probab.}, 2016.
\newblock To appear.

\bibitem{HL01}
J.-B. Hiriart-Urruty and C.~Lemar{\'e}chal.
\newblock {\em Fundamentals of convex analysis}.
\newblock Springer Science \& Business Media, 2001.

\bibitem{H63}
W.~Hoeffding.
\newblock Probability inequalities for sums of bounded random variables.
\newblock {\em Journal of the American statistical association},
  58(301):13--30, 1963.

\bibitem{j:sgtcgwrac}
S.~Janson.
\newblock Simply generated trees, conditioned {G}alton-{W}atson trees, random
  allocations and condensation.
\newblock {\em Probab. Surv.}, 9:103--252, 2012.

\bibitem{js:cnt}
T.~Jonnson and S.~Stefansson.
\newblock Condensation in nongeneric trees.
\newblock {\em J. Stat. Phys.}, 142:277--313, 2011.

\bibitem{K86}
H.~Kesten.
\newblock Subdiffusive behavior of random walk on a random cluster.
\newblock {\em Ann. de l'Inst. Henri Poincar\'e}, 22:425--487, 1986.

\bibitem{KLPP97}
T.~Kurtz, R.~Lyons, R.~Pemantle, and Y.~Peres.
\newblock A conceptual proof of the {K}esten-{S}tigum theorem for multi-type
  branching processes.
\newblock In {\em Classical and modern branching processes ({M}inneapolis,
  1994)}, volume~84 of {\em IMA Vol. Math. Appl.}, pages 181--185. Springer,
  1997.

\bibitem{glm90}
J.~A. L.-M. Luis G.~Gorostiza.
\newblock The multitype measure branching process.
\newblock {\em Advances in Applied Probability}, 22(1):49--67, 1990.

\bibitem{M08}
G.~Miermont.
\newblock Invariance principles for spatial multitype {G}alton-{W}atson trees.
\newblock {\em Ann. Inst. H. Poincar{\'e} Probab. Statist}, 44:1128--1161,
  2007.

\bibitem{N63}
J.~Neveu.
\newblock Sur le th\'eor\`eme ergodique de {C}hung-{E}rd{\H o}s.
\newblock {\em C. R. Acad. Sci. Paris}, 257:2953--2955, 1963.

\bibitem{P14}
S.~P{\'e}nisson.
\newblock Beyond q-process: Various ways of conditioning the multitype
  {G}alton-{W}atson process.
\newblock {\em ALEA}, 13:223--237, 2016.

\bibitem{R11}
D.~Rizzolo.
\newblock Scaling limits of {M}arkov branching trees and {G}alton--{W}atson
  trees conditioned on the number of vertices with out-degree in a given set.
\newblock {\em Ann. de l'Inst. Henri Poincar\'e}, 51(2):512--532, 2015.

\bibitem{c:ca}
R.~T. Rockafellar.
\newblock {\em Convex analysis}.
\newblock Princeton Landmarks in Mathematics. Princeton University Press, 1997.

\bibitem{R61}
E.~Rvaceva.
\newblock On domains of attraction of multi-dimensional distributions.
\newblock {\em Select. Transl. Math. Statist. and Probability}, 2:183--205,
  1961.

\bibitem{S01}
F.~Spitzer.
\newblock {\em Principles of random walk}.
\newblock Springer Science \& Business Media, 2013.

\bibitem{S14}
R.~Stephenson.
\newblock Local convergence of large critical multi-type {G}alton-{W}atson
  trees and applications to random maps.
\newblock {\em J. of Theor. Probab.}, 2016.
\newblock To appear.

\bibitem{S66}
C.~Stone.
\newblock On local and ratio limit theorems.
\newblock {\em Proc. of the Fifth Berkeley sympos. on mathematical statistics
  and probability. Berkeley and Los Angeles: Univ. California Press}, 2(part
  II):217--224, 1966.

\end{thebibliography}

\end{document}